\newcommand{\EE}{\mathbb{E}}
\newcommand{\NN}{\mathbb{N}}
\newcommand{\PP}{\mathbb{P}}
\newcommand{\QQ}{\mathbb{Q}}
\newcommand{\RR}{\mathbb{R}}
\renewcommand{\SS}{\mathbb{S}}
\newcommand{\cA}{\mathcal{A}}
\newcommand{\cF}{\mathcal{F}}
\newcommand{\cL}{\mathcal{L}}
\newcommand{\cM}{\mathcal{M}}
\newcommand{\cN}{\mathcal{N}}
\newcommand{\cO}{\mathcal{O}}
\newcommand{\cP}{\mathcal{P}}
\newcommand{\cT}{\mathcal{T}}
\newcommand{\cU}{\mathcal{U}}
\newcommand{\cX}{\mathcal{X}}
\newcommand{\cY}{\mathcal{Y}}
\newcommand{\cMFloc}{\mathcal{M}_{\mathcal{F}}^{\operatorname{loc}}}
\newcommand{\cMadloc}{\mathcal{M}^{\operatorname{adloc}}}
\newcommand{\cMFadloc}{\mathcal{M}_{\mathcal{F}}^{\operatorname{adloc}}}
\newcommand{\loc}{\operatorname{loc}}
\newcommand{\adloc}{\operatorname{adloc}}
\newcommand{\diam}{\mathrm{diam}}
\newcommand{\LtwoP}{L_2(\mathbb{P}_X)}
\newcommand{\argmin}{\mathop{\mathrm{argmin}}}
\theoremstyle{plain}
\newtheorem{theorem}{Theorem}[section]
\newtheorem{lemma}[theorem]{Lemma}
\newtheorem{proposition}[theorem]{Proposition}
\newtheorem{remark}[theorem]{Remark}
\newtheorem{example}[theorem]{Example}
\newtheorem{definition}[theorem]{Definition}
\numberwithin{equation}{section}
\begin{document}

\begin{center}
{\LARGE Characterizing the minimax rate of nonparametric regression under bounded star-shaped constraints}

{\large
\begin{center}
Akshay Prasadan and Matey Neykov
\end{center}}

{Department of Statistics \& Data Science, Carnegie Mellon University\\
Department of Statistics and Data Science, Northwestern University\\
[2ex]\texttt{aprasada@andrew.cmu.edu}, ~~~ \texttt{mneykov@northwestern.edu}}
\end{center}

\begin{abstract}
    We quantify the minimax rate for a nonparametric regression model over a star-shaped function class $\cF$ with bounded diameter. We obtain a minimax rate of ${\varepsilon^{\ast}}^2\wedge\diam(\cF)^2$ where \[\varepsilon^{\ast} =\sup\{\varepsilon \ge 0:n\varepsilon^2 \le \log \cMFloc(\varepsilon,c)\},\] where $\log \cMFloc(\cdot, c)$ is the local metric entropy of $\cF$, $c$ is some absolute constant scaling down the entropy radius, and our loss function is the squared population $L_2$ distance over our input space $\cX$. In contrast to classical works on the topic \cite[cf.][]{yang1999information}, our results do not require functions in $\cF$ to be uniformly bounded in sup-norm. In fact, we propose a condition that simultaneously generalizes boundedness in sup-norm and the so-called $L$-sub-Gaussian assumption that appears in the prior literature. In addition, we prove that our estimator is adaptive to the true point in the convex-constrained case, and to the best of our knowledge this is the first such estimator in this general setting. This work builds on the Gaussian sequence framework of \cite{neykov2022minimax} using a similar algorithmic scheme to achieve the minimax rate. Our algorithmic rate also applies with sub-Gaussian noise. We illustrate the utility of this theory with examples including multivariate monotone functions, linear functionals over ellipsoids, and Lipschitz classes. 
\end{abstract}
\section{Introduction}
\label{section:nonparam:introduction}

This paper investigates minimax rates for nonparametric regression models under star-shaped constraints. We build on the techniques of \citet{neykov2022minimax} and \citet{prasadan2024informationtheoreticlimitsrobust} with a universal scheme whose error matches the well-known minimax lower bound due to Fano's inequality. The minimax rate turns out to depend closely on the local geometry of the constraint set, in particular, in terms of its local packing numbers. We are able to handle cases where the functions in $\cF$ do not have a uniform sup-norm bound.\footnote{While we can handle function spaces unbounded in sup norm constraints we are always assuming that the function space has a bounded $\LtwoP$ diameter, defined in Section 1.1.} Our algorithmic upper bound also applies when the noise term is sub-Gaussian.

\subsection{Problem Formulation and Notation}

Let $\cX\subseteq\RR^p$ and let $\PP_X$ be some distribution on $\cX$. Let $\cF$ be a function class over $\cX$. We observe  $\{(X_i,Y_i)\}_{i=1}^n$ where for each $i\in\{1,\dots,n\}$,  $X_i$ is independently drawn from $\PP_X$ and $Y_i = \bar{f}(X_i)+\xi_i \in\cY\subseteq\RR$. Here $\bar{f}\in\cF$ denotes the true regression function and the $\xi_i\sim\cN(0,\sigma^2)$ are i.i.d. noise terms for some fixed constant $\sigma>0$ (which does not scale with the sample size). We also consider sub-Gaussian noise, where we say a random variable $Z$ is sub-Gaussian with mean $\mu$ and variance proxy $\tau^2$ if $\EE[\exp(t(Z-\mu)]\le \exp(t^2\tau^2/2)$ for all $t>0$ and write $Z\sim \mathrm{SG}(\mu,\tau^2)$. In the sub-Gaussian setting, we will assume $\xi_i\sim \mathrm{SG}(0,\sigma^2)$ are drawn i.i.d.  

For any $f,g\in\cF$, define the $\LtwoP$-distance between $f$ and $g$ by \[\|f-g\|_{\LtwoP}=\sqrt{\int |f(x)-g(x)|^2\mathrm{d}\PP_X(x)}.\] The $L_p(\PP_X)$-norm can be defined analogously for $p\ge 2$. We write $\cF\subseteq \LtwoP$ if $\int |f(x)|^2\mathrm{d}\PP_X(x)<\infty$ for all $f\in\cF$. We define $d:=\diam(\cF) := \sup\{\|f-g\|_{\LtwoP}:f,g\in\cF\}$; we assume this quantity is finite throughout the paper. Let $B(f,r, \|\cdot\|_{\LtwoP})= B(f,r)=\{g\in\cF:\|f-g\|_{\LtwoP} \le r\}$. Unless stated otherwise, we will always use the $\LtwoP$-norm for balls and abbreviate the notation as $B(f,r)$. We write $B_2^p(\beta,r)$ for the Euclidean $\ell_2$-ball in $\RR^p$ with radius $r$ and center $\beta\in\RR^p$.

The empirical $L_2$-distance given input points $X_1,\dots,X_n\sim \PP_X$ is defined by \[\|f-g\|_{L_2(\PP_n)} = n^{-1/2}\sqrt{\sum_{i=1}^n |f(X_i)-g(X_i)|^2}.\] We introduce the notation $\vec{Y} = (Y_1,\dots,Y_n), \vec{X} = (X_1,\dots,X_n)$. We denote by $g(\vec{X})$ the vector with $g(X_i)$ in the $i$th component, and similarly for $f(\vec{X})$ and $\bar{f}(\vec{X}).$ We can equivalently write our $L_2(\PP_n)$ distances as $\|f-g\|_{L_2(\PP_n)} = n^{-1/2}\|f(\vec{X})-g(\vec{X})\|_2$, using the Euclidean $L_2$ norm for vectors in $\RR^n$.  

We often abbreviate $\{1,\dots,n\}$ with $[n]$. We write $x\lesssim y$ (or $y\gtrsim x$) if there is some positive absolute constant $\alpha$ such that $x\le \alpha y$. If $x\lesssim y$ and $y\lesssim x$, we write $x\asymp y$. We denote the $p\times p$ identity matrix by $\mathbb{I}_p$.

\begin{definition}
    We say a function class $\cF$ is star-shaped if there exists an $f_c\in\cF$, called the center of $\cF$, such that for any $f\in\cF$ and $t\in[0,1]$, $tf+(1-t)f_c\in \cF$. 
\end{definition}

Henceforth we will assume $\cF\subseteq \LtwoP$ is a star-shaped and complete class of real-valued functions on $\cX\subseteq\RR^p$. Our goal is to  quantify the minimax rate \[\inf_{\hat{f}}\sup_{\bar{f}\in\cF}\EE_{\vec{X},\vec{Y}}\|\hat{f}(\vec{X},\vec{Y})-\bar{f}\|_{\LtwoP}^2\] and achieve it with a universal scheme depending only on the local geometry of $\cF$. Here the infimum is over all measurable functions of the data $\hat{f}\colon \cX^n\times\cY^n\to \cF$.

A crucial assumption we impose on $\cF$ and $\PP_X$ is the following: for some fixed constant $L>0$ and every integer $p\ge 2$ we have \begin{align} \label{new:bernstein:sufficient:condition}
   \|f - g\|_{L_{2p}(\PP_X)}^{2p} \leq \|f - g\|_{\LtwoP}^2 L^{p-1} p!/2.
\end{align} This will later yield us a Bernstein-like concentration inequality from which we can derive our upper bound. The requirement is implied by having a bound on the sup-norm of $\cF$, or an $L$-sub-Gaussian condition (explained in the Related Works section), and is in fact strictly more general than either of these assumptions.

An essential tool will be the local metric entropy of $\cF$, which we now define. Observe that this quantity may be infinite if $\cF$ is not totally bounded, but our theorems will still hold (see Remark \ref{remark:total:boundedness}). 

\begin{definition}[Local Metric Entropy]\label{local:entropy:def:nonparam} We say a set $S$ is an $r$-packing set in the $\LtwoP$ norm for a function class $\mathcal{G}$ defined on $\cX$ if $S\subseteq \mathcal{G}$ and for all distinct $f,g\in S$, $\|f-g\|_{\LtwoP} > r$. We denote by $\cM(r,\mathcal{G})$ the maximal cardinality of an $r$-packing set in the $\LtwoP$ norm of  $\mathcal{G}$. Given our function class $\mathcal{F}$, we define for a constant $c>1$ the following: \begin{align*}
    \cMFloc(\varepsilon, c) = \sup_{f \in \cF} \cM(\varepsilon/c, B(f, \varepsilon) \cap \cF).
\end{align*} Then $\log \cMFloc(\varepsilon, c)$ is the local metric entropy of $\cF$. We may omit the subscript $\cF$ unless necessary.
\end{definition}

The local metric entropy has been used in several existing works to obtain minimax rates for non-parametric regression, including \citet[Section 7]{yang1999information}, \citet{agglomeration_yang_2004}, \citet[Section C.3.]{wang_yang_2014}, and \citet{mendelson_2017_local_versus_global}.

\subsection{Related Works}

The literature on nonparametric regression is vast \citep[e.g.,][]{tsybakov2009introduction}, so we do not give an exhaustive summary. We do however highlight those works closest in spirit to our results.

In \citet{yang1999information}, the authors study minimax rates in terms of global entropies for density estimation, but apply their results to nonparametric regression. They obtain a similar result in their Theorem 6, where the minimax rate is $\varepsilon^2$ for $\varepsilon$ satisfying $\log \cM(\varepsilon,\cF)\asymp n\varepsilon^2$. However, they require $\sup_{f\in \cF}\|f\|_{\infty}<L$ for some $L>0$, while our results relax this assumption. As a consequence, we are able to derive minimax rates on linear functionals with unbounded sup norms. In addition, \cite{yang1999information} impose an assumption on the parameter space which essentially forces the local and global metric entropies to coincide. We do not require this condition in our work.

\citet{mendelson_2017_local_versus_global} considers prediction problems for convex function classes with bounded $L_2(\mathbb{P}_X)$ diameter, including ones with unbounded sup-norm. The minimax rate we consider can be viewed as a special case of this prediction setting, although we have a much broader set of constraints. However, Mendelson bounds the error with high probability rather than in expectation. Moreover, \citet[Definition 2.1]{mendelson_2017_local_versus_global} requires an assumption that $\cF$ is $L$-sub-Gaussian. This definition has two components. First, for each $p\ge 2$ and $f,g\in\cF\cup\{0\}$, there must exist $L>0$ such that $\|f-g\|_{L_p(\PP_X)} \le L\sqrt{p}\|f-g\|_{\LtwoP}$. Second, the centered canonical Gaussian process induced by $\cF$ must be bounded. That is, to each $f\in\cF$, we associate a Gaussian random variable $G_f$ with $\EE[G_f]=0$ and $\EE[G_f G_g]= \EE[fg]$ for all $f,g\in\cF$, and the assumption is that $\sup_f G_f$ is a bounded random variable almost surely. We impose a strictly weaker condition, which is implied, for example, by an $L$-sub-Gaussian assumption or a bounded sup-norm, but neither of those two conditions are necessary. We do not impose an assumption about boundedness of the Gaussian process. Another distinction with our approach is that \citet{mendelson_2017_local_versus_global} does not precisely match the lower and upper bounds on the error rate. In addition, since every convex set is star-shaped, we generalize the shape constraints.

Other works include \citet{agglomeration_yang_2004}, which examines taking data-dependent convex combinations of $n^{\tau}$ for $\tau>0$ many candidate regression procedures to obtain a rate of order of the minimax rate plus a penalty term. This work requires the function to be bounded in sup-norm, but relaxes assumptions on the distribution of the error term. \citet{yang_2014_high_dimensional_nonparam} studies the nonparametric regression setting under assumptions of sparsity or additivity of the function class. They also require assumptions that the distribution on the input space $\cX=[0,1]^p$ has a density lower bounded on a sub-hypercube (i.e., the data is not from a strictly lower dimensional subspace). The paper also uses global metric entropy conditions related to the minimax rate, as in \citet{yang1999information}. \citet{zhao2023minimaxratesconvergencenonparametric} study a very general set of non-parametric problems allowing for more general noise distributions as well as the noise varying with $X$. Their results impose a bounded sup-norm assumption on the function class, however, in contrast to our work. Moreover, we are interested in scenarios where the richness condition in the authors' Definition 3 may not hold, i.e., when the local and global entropies do not match for small $\varepsilon$.

The techniques in this paper are similar to \citet{neykov2022minimax} in that we successively build finer local packings of the function class, resulting in an infinite tree of points that densely cover the set. We then traverse this tree using  observed data. However, we must perform an additional pruning step to correct a gap found in the argument of \citet{neykov2022minimax}. The procedure is described in full detail in \citet{prasadan2024informationtheoreticlimitsrobust}, where the authors derive minimax rates for adversarially contaminated, sub-Gaussian mean location models with a star-shaped constraint.  A drawback of these techniques is that the tree construction as well as traversal is not efficiently computable, at least in its stated form.

\subsection{Organization}

In Section \ref{section:nonparam:upper:bound}, we give a summary of well-known results about lower bounds for the minimax rates. Section \ref{section:nonparam:upper:bound} establishes a matching upper bound in three steps: Section 3.1 introduces our local packing algorithm for $\mathcal{F}$; Section 3.2 derives useful properties of our key assumption \eqref{new:bernstein:sufficient:condition} (with comparisons to $L$-sub-Gaussianity); Section 3.3 bounds our algorithm's error and proves it attains the minimax rate. Then in Section \ref{section:nonparam:adaptive}, we  prove that the error-rate of the algorithm's output is adaptive to the true $\bar{f}$, albeit only when $\cF$ is convex. 

We then consider in Section \ref{section:nonparam:examples} numerous applications of our theory. One of the simplest examples is the set of linear functionals: given a star-shaped and compact set $K\subset\RR^p$, we consider functions of the form $X^T\beta$ for $\beta\in K$ and draw data $X_1,\dots,X_n$ from an isotropic sub-Gaussian distribution. For $K$ being the $\ell_1$-ball and assuming the true $\beta^{\ast}$ is $s$-sparse, we use Sudakov minoration along with properties of the Gaussian width of tangent cones stated in \citet{chandrasekaran2012convex} and \citet{amelunxen2014living} to derive the known minimax rate of order $\varepsilon\asymp\sqrt{s\log(d/s)/n}$, matching results from \citet{raskutti2011minimax} (see also \citet{wang_yang_2014} for more general bounds). Following the ellipse example in \citet{neykov2022minimax}, we again consider sub-Gaussian design and linear functionals over ellipsoids and recover a novel rate. We compare this to the work of \citet{pathak2023noisy}. We then derive known minimax rates for functions with smoothness assumptions, a well-studied setting \citep{stone_1980, stone_1982}. Lastly, we consider the example of monotone functions in general dimensions. We use existing results about the metric entropy from \citet{yang1999information} and \citet{gao_Wellner_monotone} to recover the well-known minimax rate. We then analyze monotone functions that are piecewise constant on $k$ hyper-rectangles and show that our algorithm matches the adaptation rate of the least squares estimator established by \citet{isotonic_general_dimensions}. 

\section{Lower Bound}
\label{section:nonparam:lower:bound}

Our lower bound results all use existing and well-known information theoretic arguments, but we provide the details for completeness. The same argument using the local entropy appears in \citet[page 1594]{yang1999information}, \citet[Section 6]{agglomeration_yang_2004}, and \citet[Section C.3]{wang_yang_2014}, for example.
The upshot is that the minimax rate is at least $\varepsilon^2$ up to constants where $\varepsilon$ satisfies $\log \cMFloc(\varepsilon, c) > 4(n\varepsilon^2/(2\sigma^2) \vee \log 2)$.

We start by restating Fano's Inequality:
\begin{lemma}[Fano's Inequality for Nonparametric Regression]\label{lemma:fano:nonparam} Let $f_1, \ldots, f_m\in\cF$ be a collection of $\varepsilon$-separated functions, i.e., satisfying $\|f_i-f_j\|_{\LtwoP} > \varepsilon$ for each $i\ne j$. Suppose $J$ is uniformly distributed over the index set $[m]$, and $(Y_i |X_i, J = j) = f_j(X_i) + \xi_i$ where $\xi_i \sim \cN(0,\sigma^2)$ for each $i\in[n],j\in[m]$. Then 
\begin{align*}
    \inf_{\hat{f}}\sup_{\bar{f}\in\cF}\EE_{\vec{X},\vec{Y}}\|\hat{f}(\vec{X},\vec{Y})-\bar{f}\|_{\LtwoP}^2\geq \frac{\varepsilon^2}{4}\bigg(1 - \frac{I(\vec{X},\vec{Y};J) + \log 2}{\log m}\bigg),
\end{align*} where $I(\vec{X},\vec{Y};J)$ is the mutual information between our data and $J$.
\end{lemma}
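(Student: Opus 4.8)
The plan is to run the standard reduction from minimax estimation to a multiple hypothesis test and then invoke Fano's inequality in its classical form. Fix the $\varepsilon$-separated family $f_1,\dots,f_m\in\mathcal{F}$ supplied by the hypothesis (I read the separation condition as pairwise $L_2(\mathbb{P}_X)$-distance exceeding $\varepsilon$, consistent with the $\varepsilon^2/4$ appearing in the conclusion). For an arbitrary estimator $\hat{f}=\hat{f}(\vec{X},\vec{Y})$, define the induced test $\hat{J}$ to be any index $j\in[m]$ minimizing $\|\hat{f}-f_j\|_{L_2(\mathbb{P}_X)}$, with ties broken arbitrarily.

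The first step is the deterministic observation that a testing error forces a non-trivial estimation error. If $\hat{J}=k\neq j$, then $\|\hat{f}-f_k\|_{L_2(\mathbb{P}_X)}\le\|\hat{f}-f_j\|_{L_2(\mathbb{P}_X)}$ by the defining property of $\hat{J}$, so by the triangle inequality $\|f_j-f_k\|_{L_2(\mathbb{P}_X)}\le 2\|\hat{f}-f_j\|_{L_2(\mathbb{P}_X)}$; combining with the separation assumption $\|f_j-f_k\|_{L_2(\mathbb{P}_X)}>\varepsilon$ gives $\|\hat{f}-f_j\|_{L_2(\mathbb{P}_X)}^2\ge\varepsilon^2/4$ on the event $\{\hat{J}\neq j\}$. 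The second step restricts the supremum to the finite family and averages: since each $f_j\in\mathcal{F}$,
\[
\sup_{\bar{f}\in\mathcal{F}}\mathbb{E}\|\hat{f}-\bar{f}\|_{L_2(\mathbb{P}_X)}^2\ \ge\ \frac{1}{m}\sum_{j=1}^m\mathbb{E}_j\|\hat{f}-f_j\|_{L_2(\mathbb{P}_X)}^2\ \ge\ \frac{\varepsilon^2}{4}\,\mathbb{P}\big(\hat{J}\neq J\big),
\]
where $\mathbb{E}_j$ is the expectation under the model with true function $f_j$, $J\sim\mathrm{Unif}([m])$, and $\mathbb{P}$ is the joint law of $(J,\vec{X},\vec{Y},\hat{J})$.

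The final step is Fano's inequality applied to the Markov chain $J\to(\vec{X},\vec{Y})\to\hat{J}$: the data-processing inequality gives $I(\hat{J};J)\le I(\vec{X},\vec{Y};J)$, and the standard bound $\mathbb{P}(\hat{J}\neq J)\ge 1-\frac{I(\hat{J};J)+\log 2}{\log m}$ then yields $\mathbb{P}(\hat{J}\neq J)\ge 1-\frac{I(\vec{X},\vec{Y};J)+\log 2}{\log m}$; substituting into the display and taking the infimum over $\hat{f}$ gives the claim. There is no genuine obstacle here — this is a textbook argument (cf.\ Tsybakov, Wainwright) — and the only points needing care are that the argmin-test is well-defined and that a misclassification implies the quoted squared-error lower bound with the stated constant, together with citing the precise form of Fano's inequality and the data-processing step that replaces $I(\hat{J};J)$ by $I(\vec{X},\vec{Y};J)$.
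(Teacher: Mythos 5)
Your proof is correct: it is the standard reduction from estimation to testing (argmin test, triangle inequality with the separation to get the $\varepsilon^2/4$ loss on a misclassification, averaging over the uniform prior, then Fano plus data processing), which is exactly the argument this lemma rests on; the paper itself states the lemma without proof, treating it as a known result. You were also right to read the separation hypothesis as pairwise \emph{distance} exceeding $\varepsilon$ despite the stray square in the statement --- that reading is the one consistent with both the $\varepsilon^2/4$ constant and the way the lemma is invoked in the proof of Lemma \ref{lemma:nonparam:lowerbound}.
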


The next lemma gives an upper bound on the mutual information term that appears in Fano's Inequality. 

\begin{lemma}[Bounding the Mutual Information in Nonparametric Setting] \label{lemma:nonparam:mutualinfo} Under the same set-up as in Lemma \ref{lemma:fano:nonparam}, we have $I(\vec{X},\vec{Y};J) \le \frac{n}{2\sigma^2}\max_j \|f_j-h\|_{\LtwoP}^2$ for any $h\in\cF$.
\end{lemma}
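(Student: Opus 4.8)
The plan is to upper-bound the mutual information by a KL divergence to a cleverly chosen reference distribution. Write $P_j$ for the conditional law of $(\vec{X},\vec{Y})$ given $J=j$ and $\bar{P}=\frac{1}{m}\sum_{j=1}^m P_j$ for the marginal law of $(\vec{X},\vec{Y})$; since $J$ is uniform on $[m]$, we have $I(\vec{X},\vec{Y};J)=\frac{1}{m}\sum_{j=1}^m D_{\mathrm{KL}}(P_j\,\|\,\bar{P})$. The key fact is the variational (``golden formula'') characterization: for \emph{any} distribution $Q$ on $(\vec{X},\vec{Y})$,
\[
\frac{1}{m}\sum_{j=1}^m D_{\mathrm{KL}}(P_j\,\|\,Q)=I(\vec{X},\vec{Y};J)+D_{\mathrm{KL}}(\bar{P}\,\|\,Q)\ \ge\ I(\vec{X},\vec{Y};J),
\]
so that $I(\vec{X},\vec{Y};J)\le \frac{1}{m}\sum_{j=1}^m D_{\mathrm{KL}}(P_j\,\|\,Q)$. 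I would include this one-line identity for completeness.

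Next I would instantiate $Q$ as the law of $(\vec{X},\vec{Y})$ generated by the regression function $h$: the $X_i$ are drawn i.i.d.\ from $\mathbb{P}_X$ and, conditionally on $\vec{X}$, the $Y_i$ are independent with $Y_i\sim N(h(X_i),\sigma^2)$. Under both $P_j$ and $Q$ the design $\vec{X}$ has the common marginal $\mathbb{P}_X^{\otimes n}$, so by the chain rule for KL divergence $D_{\mathrm{KL}}(P_j\,\|\,Q)=\mathbb{E}_{\vec{X}}\big[D_{\mathrm{KL}}(P_{\vec{Y}\mid\vec{X}}^{(j)}\,\|\,Q_{\vec{Y}\mid\vec{X}})\big]$; and since conditionally on $\vec{X}$ the coordinates of $\vec{Y}$ are independent, this tensorizes to $\sum_{i=1}^n \mathbb{E}_{X_i}\big[D_{\mathrm{KL}}(N(f_j(X_i),\sigma^2)\,\|\,N(h(X_i),\sigma^2))\big]$. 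Plugging in the closed form $D_{\mathrm{KL}}(N(a,\sigma^2)\,\|\,N(b,\sigma^2))=(a-b)^2/(2\sigma^2)$ and using $X_i\sim\mathbb{P}_X$ gives $D_{\mathrm{KL}}(P_j\,\|\,Q)=\frac{n}{2\sigma^2}\,\mathbb{E}_{X\sim\mathbb{P}_X}[(f_j(X)-h(X))^2]=\frac{n}{2\sigma^2}\|f_j-h\|_{L_2(\mathbb{P}_X)}^2$.

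Combining the two displays, $I(\vec{X},\vec{Y};J)\le \frac{1}{m}\sum_{j=1}^m \frac{n}{2\sigma^2}\|f_j-h\|_{L_2(\mathbb{P}_X)}^2 \le \frac{n}{2\sigma^2}\max_{j}\|f_j-h\|_{L_2(\mathbb{P}_X)}^2$, which is the claim. I would remark that $h$ only needs to be measurable (membership $h\in\mathcal{F}$ merely guarantees the right-hand side is finite), and that the freedom to center the reference at an arbitrary $h$ is precisely what the later local-entropy argument exploits to turn a global packing into a local one.

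There is no substantial obstacle here; the only points needing minor care are the measure-theoretic justification of the chain-rule factorization of $D_{\mathrm{KL}}(P_j\,\|\,Q)$ (routine, since $\vec{X}$ has a common marginal and the conditional laws of $\vec{Y}\mid\vec{X}$ are mutually absolutely continuous products of Gaussians) and stating the variational lower bound for mutual information correctly. I expect the write-up to be short.
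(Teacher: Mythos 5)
Your proposal is correct and follows essentially the same route as the paper's proof: both bound the mutual information by the average KL divergence to a reference law generated by $h$ (the paper cites this as (15.52) of Wainwright, which is exactly the variational identity you state), then factor out the common design marginal, tensorize over the $n$ coordinates, and apply the Gaussian KL formula before bounding the average by the maximum. No gaps.
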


To complete the lower bound, we return to the local metric entropy defined earlier.

\begin{lemma} \label{lemma:nonparam:lowerbound} We have
\begin{align*}
     \inf_{\hat{f}}\sup_{\bar{f}\in\cF}\EE_{\vec{X},\vec{Y}}\|\hat{f}(\vec{X},\vec{Y})-\bar{f}\|_{\LtwoP}^2 \geq \frac{\varepsilon^2}{8 c^2}
\end{align*}
for any $\varepsilon$ satisfying $\log \cMFloc(\varepsilon, c) > 4\left(\frac{n\varepsilon^2}{2\sigma^2} \vee \log 2\right)$, where $c$ is the constant from the Definition \ref{local:entropy:def:nonparam} fixed to some large enough value.
\end{lemma}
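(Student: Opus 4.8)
The plan is to instantiate Lemmas~\ref{lemma:fano:nonparam} and~\ref{lemma:nonparam:mutualinfo} on a carefully chosen \emph{local} packing set. First I would turn the entropy hypothesis into a concrete packing. Since $\log M_{\mathcal{F}}^{\operatorname{loc}}(\varepsilon, c) > 4(n\varepsilon^2/(2\sigma^2) \vee \log 2)$ and $M_{\mathcal{F}}^{\operatorname{loc}}(\varepsilon,c)$ is by Definition~\ref{local:entropy:def:nonparam} a supremum over $f\in\mathcal{F}$ of $M(\varepsilon/c, B(f,\varepsilon)\cap\mathcal{F})$, there is a center $f_0\in\mathcal{F}$ together with an $(\varepsilon/c)$-packing $\{f_1,\dots,f_m\}\subseteq B(f_0,\varepsilon)\cap\mathcal{F}$ whose cardinality satisfies $\log m > 4(n\varepsilon^2/(2\sigma^2)\vee\log 2)$; if the local packing number at $f_0$ happens to be infinite, one simply passes to a finite sub-packing of large enough size. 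By construction every $f_j\in\mathcal{F}$, so this family is admissible in Lemma~\ref{lemma:fano:nonparam}.

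Next I would feed this $(\varepsilon/c)$-separated family into Lemma~\ref{lemma:fano:nonparam}, obtaining
\[
\inf_{\hat f}\sup_{\bar f\in\mathcal{F}}\mathbb{E}_{X,Y}\|\hat f(\vec X,\vec Y)-\bar f\|_{L_2(\mathbb{P}_X)}^2 \;\ge\; \frac{\varepsilon^2}{4c^2}\left(1 - \frac{I(\vec X,\vec Y;J)+\log 2}{\log m}\right).
\]
The key move — and the reason it is the \emph{local} rather than the global entropy that governs the rate — is the choice of the free function $h$ in Lemma~\ref{lemma:nonparam:mutualinfo}: take $h=f_0$, the center of the ball. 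Then $\|f_j-f_0\|_{L_2(\mathbb{P}_X)}\le\varepsilon$ for every $j$, so Lemma~\ref{lemma:nonparam:mutualinfo} gives $I(\vec X,\vec Y;J)\le \frac{n}{2\sigma^2}\max_j\|f_j-f_0\|_{L_2(\mathbb{P}_X)}^2 \le \frac{n\varepsilon^2}{2\sigma^2}$.

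It then remains only to lower bound the Fano factor. The entropy hypothesis gives $\log m > 4\cdot\frac{n\varepsilon^2}{2\sigma^2} \ge 4\, I(\vec X,\vec Y;J)$ and also $\log m > 4\log 2$, hence $I(\vec X,\vec Y;J)+\log 2 \le \frac14\log m + \frac14\log m = \frac12\log m$, so $1 - \frac{I(\vec X,\vec Y;J)+\log 2}{\log m}\ge \frac12$. Substituting into the displayed inequality yields the claimed bound $\frac{\varepsilon^2}{8c^2}$.

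I do not anticipate a genuine obstacle: this is a standard Fano argument, and the only points needing care are (i) extracting a finite packing when $M_{\mathcal{F}}^{\operatorname{loc}}(\varepsilon,c)$ may be infinite, and (ii) tracking that the separation entering Fano's bound is $\varepsilon/c$ rather than $\varepsilon$, which is exactly what produces the $1/c^2$ factor — and, conceptually, recognizing that centering the comparison distribution at $f_0$ is what lets the local packing structure appear on both sides of the argument simultaneously.
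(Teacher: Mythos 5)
Your proposal is correct and follows essentially the same route as the paper: extract a local $(\varepsilon/c)$-packing from the entropy hypothesis, center the mutual-information bound of Lemma \ref{lemma:nonparam:mutualinfo} at the ball's center so that $I(\vec X,\vec Y;J)\le n\varepsilon^2/(2\sigma^2)$, and apply Fano with the $\varepsilon/c$ separation, using $\log m > 4(n\varepsilon^2/(2\sigma^2)\vee\log 2)\ge 2(n\varepsilon^2/(2\sigma^2)+\log 2)$ to get the factor $1/2$. Your handling of the (possibly unattained or infinite) supremum by extracting a sufficiently large finite packing directly from the strict inequality is, if anything, slightly cleaner than the paper's remark about limiting sequences.
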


\section{Upper Bound}
\label{section:nonparam:upper:bound}

We now establish our upper bound, simultaneously handling the cases where $\cF$ is bounded or unbounded in sup-norm, while always assuming the $\LtwoP$-diameter is finite. More precisely, the bounded sup-norm scenario means there is some $r>0$ such that $\|f\|_\infty \le r$ for all $f\in\cF$. This implies $d\le 2 r$ by the triangle inequality. The unbounded sup-norm assumption means for all $r>0$, there is an $f\in\cF$ and $x\in \cX$ such that $|f(x)|>r$). We implicitly assume $\cF$ is totally-bounded when using the local metric entropy, but Remark \ref{remark:total:boundedness} details how this is not a necessary assumption.

To achieve the optimal rate, we first construct an algorithm that outputs an estimator achieving a squared error rate of $\varepsilon^2$ when $\varepsilon$ satisfies a (discretized) condition of the form $n\varepsilon^2 > 2\log \cMFloc\left(\overline{C}\varepsilon, c\right)\vee \log 2$ for some constant $\overline{C}$. 

To tie together this upper bound with the lower bound from Lemma \ref{lemma:nonparam:lowerbound}, we show that either the minimax rate is $d^2$, in which case our algorithm clearly outputs a function in $\cF$ achieving this error rate, or it has an error rate ${\varepsilon^{\ast}}^2$ satisfying both the lower and upper bound conditions mentioned above. In fact, we establish the rate is ${\varepsilon^{\ast}}^2\wedge d^2$, where \begin{equation} \label{definition:epsilon:star}
    \varepsilon^{\ast}:=\sup\{\varepsilon\ge 0:n\varepsilon^2 \le \log \cMFloc(\varepsilon, c)\}
\end{equation} for some sufficiently large constant $c$.

Throughout this section, we take $C=2(c+1)$ and assume $c$ is sufficiently large.

\subsection{Packing and Pruning a Tree}

Before applying our algorithm, we must perform an intricate construction of iteratively formed local packing sets, as described in \citet{neykov2022minimax}. This produces a directed tree whose nodes densely pack and cover $\cF$, at finer distances the deeper down the tree. However, as is done in \citet{prasadan2024informationtheoreticlimitsrobust}, a critical pruning step is required. Once this pruned tree is constructed, we use our data to traverse the tree (equivalently, updating our estimate of $\bar f$), forming a Cauchy sequence in $\cF$. Due to our use of functions, we use a different criterion to update our estimate of $\bar{f}$, namely,  minimizing the empirical least squares error among functions in the local packing at each step. Moreover, the tree itself is completely independent of the observed data and only depends on the function class.

We direct the reader to \citet[Section 3.1]{prasadan2024informationtheoreticlimitsrobust}, where the tree construction is explained in detail for an arbitrary star-shaped set $K$ with a bounded diameter. We will give a very brief synopsis and write an analogous version for our star-shaped function class.

The tree, denoted $G$, will be constructed level by level, with nodes corresponding to functions in $\cF$. We will denote the $j$th level of $G$ as $\cL(j)$, where the first level is simply the root node. We write $\cP(f)$ as the parent set of a node $f$, meaning there is an edge from each node in $\cP(f)$ to $f$. The offspring set $\cO(f)$ are the set of nodes with a directed node pointing to them from $f$. Note that all notions of distance here are with respect to $\LtwoP$, but we show in Lemma \ref{lemma:equivalent:norms} that any equivalent norm may be used.

To construct $G$, first pick a root node $f_R$, forming $\cL(1)$. Form $\cL(2)$ by taking the points of a $d/c$-maximal packing of $B(f_R,d)\cap\cF$, directing an edge from $f_R$ to each of these points. For $\cL(3)$, at each point $g$ of $\cL(2)$, form a $d/(4c)$-packing of $B(g ,d/2)\cap \cF$, and draw a directed edge from $g$ to these each of those points. 

Now begins our first pruning step. Order the nodes lexicographically (justified by Remark \ref{remark:breaking_ties}), say $f_1^3,f_2^3,\dots,f_{N_3}^3$. Let $\cU_3$ be a list of unprocessed nodes, initially set to $[f_1^3,f_2^3,\dots,f_{N_3}^3]$. Now construct $\cT_3(f_1^3)$ to be the set of nodes in $\cU_3$ (other than $f_1^3$) within distance $d/(4c)$ of $f_1^3$. If empty, update $\cU_3$ by removing $f_1^3$ from it, and proceed to the steps in the following paragraph. Otherwise do the following. For each element $f_j^3$ of $\cT_3(f_1^3)$, add a directed edge from the parent node $\cP(f_j^3)$ to $f_1^3$ and delete $f_j^3$. In other words, if one child node has a `cousin' node that is too close, that child node gets a second parent, and the cousin node and the edge to it from its parent gets deleted.  Once all elements of $\cT_3(f_1^3)$ have been handled, update $\cU_3$ by removing  $\{f_1^3\}\cup \cT_3(f_1^3)$. 

Then we repeat: pick the remaining node in line from $\cU_3$, say $f_k^3$, construct $\cT_3(\cdot)$ as the set of nodes in $\cU_3$ within distance $d/(4c)$ of $f_k^4$, swap edges as needed and delete nodes, then update $\cU_3$. Once $\cU_3$ is empty, the pruning is done. The remaining nodes comprise level 3 of the tree, $\cL(3)$ of $G$ .

The steps for $\cL(4)$ and beyond are identical, except we decrease the radius by a factor of 2 at each level. That is, we form a $d/(8c)$-packing of $B(g,d/4)\cap\cF$ for each $g\in\cL(3)$, form the lexicographically ordered $\cU_4$ storing unprocessed nodes, write $\cT_4(\cdot)$ be functions in $\cU_4$ within distance $d/(8c)$ of the input, swap edges and delete nodes, update $\cU_4$, and repeat. The formal details are given in Algorithm \ref{algorithm:nonparam:directed:tree}, which is essentially identical to Algorithm 1 in \citet{prasadan2024informationtheoreticlimitsrobust}.

\begin{algorithm} 
\SetKwComment{Comment}{/* }{ */}
\caption{Directed Tree Construction \label{algorithm:nonparam:directed:tree}}
\KwInput{Root node $f_R \in \cF$}
$\cL(1)=\{f_R\}$\;
Draw directed edge from $f_R$ to the functions in maximal $(d/c)$-packing of $B(f_R, d)\cap \cF$.
Set $\cL(2)=\cO(f_R)$\;
$k \gets 3$\;
\While{TRUE} {
   For each $f\in \cL(k-1)$, draw directed edge from $u$ to nodes of a maximal $\tfrac{d}{2^{k-1}c}$-packing of $B(f,\tfrac{d}{2^{k-2}})\cap \cF$\;
   $\cU_k$ is a lexicographically ordered list of nodes just added\;
   \While{$\cU_k\ne\varnothing$} {
    Pick first element, say $f_l^k$, of $\cU_k$\;
    Set $\cT_k(f_l^k)=\{f_j^k \in\cU_k:\|f_j^k-f_l^k\|_{\LtwoP}\leq \tfrac{d}{2^{k-1} c}, j\ne l\}$\;
    For each $f_j^k\in\cT_k(f_l^k)$, remove directed edge  $\cP(f_j^k)$ to $f_j^k$ and instead draw an edge from $\cP(f_j^k)$ to $f_l^k$. Delete node $f_j^k$ from $G$\;
    Remove $f_l^k\cup \cT_k(u_l^k)$ from $\cU_k$\;
   }
   Set $\cL(k)=\bigcup_{f\in \cL(k-1)}\cO(f)$\;
   $k\gets k+1$\;
}
\Return{$G$} 
\end{algorithm}

Some basic lemmas for this directed tree construction are proven in \citet{prasadan2024informationtheoreticlimitsrobust}, albeit not necessarily for a function class. We restate the results (with slightly different notation), but omit their fairly straightforward proofs.

\begin{lemma}[Lemma 3.1 of \citet{prasadan2024informationtheoreticlimitsrobust}]\label{lemma:nonparam:pruned:tree:properties} For any $J\ge 3$, $\cL(J)$ is a $\tfrac{d}{2^{J-2}c}$-covering of $\cF$ and a $\tfrac{d}{2^{J-1}c}$-packing of $\cF$. In addition, for each $J\ge 2$ and any parent node $\Upsilon_{J-1}$ at level $J-1$, its offspring $\cO(\Upsilon_{J-1})$ form a $\tfrac{d}{2^{J-2}c}$-covering of the set $B(\Upsilon_{J-1}, \tfrac{d}{2^{J-2}}) \cap \cF$. Furthermore, the cardinality of $\cO(\Upsilon_{J-1})$ is upper bounded by $\cMFloc(\tfrac{d}{2^{J-2}}, 2c)$ for $J \geq 2$.
\end{lemma}

\begin{lemma}[Lemma 3.2 of \citet{prasadan2024informationtheoreticlimitsrobust}] \label{lemma:nonparam:bound:level:J:intersect:ball:mu} For any $f\in \cF$ and for any $J\ge 2$, the cardinality of $\cL(J-1)\cap B(f, \tfrac{d}{2^{J-2}})$ is upper bounded by $\cMFloc(\tfrac{d}{2^{J-2}}, c)\leq \cMFloc(\tfrac{d}{2^{J-2}}, 2c)$. 
\end{lemma}

\begin{lemma}[Lemma 3.3 of \citet{prasadan2024informationtheoreticlimitsrobust}] \label{lemma:nonparam:cauchy:sequence}  Let $[\Upsilon_1, \Upsilon_2,\dots,]$ be the nodes of an infinite path in $G$, i.e., $\Upsilon_{J + 1} \in \cO(\Upsilon_{J})$ for any $J \in \NN$ and $\Upsilon_1=f_R$. Then for any integers $J\ge J'\ge 1$, $\|\Upsilon_{J'}-\Upsilon_J\|_{\LtwoP}\le  \frac{d(2+4c)}{c 2^{J'}}$.
\end{lemma}

Algorithm \ref{algo_upperbound_nonparametric} then traverses the tree as follows. Starting with our root node $\Upsilon_1=f_R$, given an update $\Upsilon_{k}$ and its offspring set $\cO(\Upsilon_k)$, we set \[\Upsilon_{k+1}=\argmin_{f\in\cO(\Upsilon_k)} \sum_{i=1}^n (Y_i-f(X_i))^2.\] By Lemma \ref{lemma:nonparam:cauchy:sequence} and our assumption that $\cF$ is complete, we obtain a convergent Cauchy sequence. Observe that it is only at this stage that we actually use our data. 

This procedure may encounter ties in the minimization step, so we must modify our algorithm to handle such cases and, in doing so, preserve measurability of the estimator it outputs. Our goal is to induce a lexicographic ordering on our functions and then choose the smallest function in the ordering to break ties. The details are given in the appendix (Remark \ref{remark:breaking_ties}). We can also show (Lemma \ref{lemma:equivalent:norms}) that if an equivalent norm to $\|\cdot\|_{\LtwoP}$ is known, then the packing procedure can be performed in this norm instead and we will achieve the same necessary bounds.

Unfortunately, both the tree construction of Algorithm \ref{algorithm:nonparam:directed:tree} and the traversal in Algorithm \ref{algo_upperbound_nonparametric} are computationally intractable. The authors conjecture, however, that with a slight modification of our algorithm, the minimax rate can be achieved up to multiplicative logarithmic factors in polynomial time conditional on the dimension scaling logarithmically with $n$ and taking $\mathcal{F}$ to be linear functionals over projectable convex sets.

\begin{algorithm} 
\SetKwComment{Comment}{/* }{ */}
\caption{Upper Bound Algorithm for Nonparametric Regression \label{algo_upperbound_nonparametric}}
\KwInput{Root node $\Upsilon_1=f_R \in \cF$, pruned graph $G$, data $\{(X_i,Y_i)\}_{i=1}^n$}
$k \gets 1$\;
$\Upsilon \gets [\Upsilon_1]$\;
\While{TRUE} {
    $\Upsilon_{k+1} \gets \argmin_{f\in\cO(\Upsilon_k)} \sum_{i=1}^n (Y_i-f(X_i))^2$ \Comment*[r]{Break ties lexicographically}
    $k \gets k + 1$\;
}
\Return{$\Upsilon=[\Upsilon_1,\Upsilon_2,\Upsilon_3,\dots]$} 
\end{algorithm}

\subsection{Moment Condition}

Recall our assumption on both $\cF$ and $\PP_X$ given in  \eqref{new:bernstein:sufficient:condition}. Earlier work on the convex constrained non-parametric regression setting by \citet{mendelson_2017_local_versus_global} imposed an $L$-sub-Gaussian condition (defined in the following proposition). It turns out an $L$-sub-Gaussian assumption implies \eqref{new:bernstein:sufficient:condition}, as does having a bounded sup-norm. Similar to $L$-sub-Gaussianity, our new condition implies a Bernstein-like concentration inequality for $\|f-g\|_{L_2(\PP_n)}^2$ that we use to bound our algorithmic error.

We start by proving \eqref{new:bernstein:sufficient:condition} is more general in the following proposition and example. Note that the proof requires $\EE (f-g)^2 \leq d^2$, where $d$ is considered a fixed constant.

\begin{proposition} \label{proposition:L:subGaussian:implies:moment:condition} Assume the $\LtwoP$ diameter $d$ is some fixed constant. If $\cF$ and $\PP_X$ satisfy an $L$-sub-Gaussian assumption, i.e., \begin{equation}
    \|f - g\|_{L_p(\PP_X)} \leq L\sqrt{p} \|f - g\|_{\LtwoP} \label{eq:L:subgaussian:assumption}
\end{equation}for all $f,g\in\cF$ and $p\ge 2$ then \eqref{new:bernstein:sufficient:condition} holds. Moreover, if $\cF$ has a uniform sup-norm bound, we again have \eqref{new:bernstein:sufficient:condition}.
\end{proposition}

\begin{example}
    Suppose $X \sim \cN(0,1)$  and the model is $Y = \bar{f}(X) + \xi$, where \[\bar{f} \in \cF = \{f | f \mbox{ is 1-Lipschitz}, f(0) = 0\}.\] Then we have \eqref{new:bernstein:sufficient:condition} but this example is neither uniformly bounded in sup-norm nor $L$-sub-Gaussian.
\end{example}

The proof that the preceding example is not $L$-sub-Gaussian follows the logic in \citet{qiyang_wellner} using the failure of a necessary small-ball condition.

Next, we show our moment condition \eqref{new:bernstein:sufficient:condition} implies a Bernstein-like concentration inequality. Indeed, the proof is very similar to that of Bernstein's inequality. 

\begin{lemma} \label{lemma:moment:condition:implies:bernstein:concentration}
    Assume for a random variable $Z$ we have
\begin{align*}
    \EE Z^{2p} \leq \EE Z^2 L^{p-1} p!/2,
\end{align*}
for all integers $p\ge 2$ and for some $L > 0$. Then if $Z_1,\ldots, Z_n$ are i.i.d. variables with the same law as $Z$, we have
\begin{align*}
   \MoveEqLeft \PP\left(\sum_{i = 1}^n (Z_i^2 - \EE Z^2) \geq t\right) \vee \PP\left(\sum_{i = 1}^n (Z_i^2 - \EE Z^2) \leq -t\right) \\ &\leq \exp\left(-\frac{t^2}{16 n \EE Z^2 L + 4 Lt}\right).
\end{align*}
\end{lemma}

\subsection{Algorithm Error Bounds}

To prove our algorithm has the desired properties, we first relate the $\LtwoP$ norm to the $L_2(\PP_n)$ norm. In particular, Lemma \ref{lemma:bernstein:nonparam:v2} shows that if $f,g\in\cF$ are far apart but $f$ is close to $\bar{f}$ in $\LtwoP$-distances, then the same is true for $f,g,\bar{f}$ with high probability in the $L_2(\PP_n)$-distance. The proof relies on applying our Bernstein-like concentration inequality in Lemma \ref{lemma:moment:condition:implies:bernstein:concentration}. This result also applies when the noise is sub-Gaussian.

\begin{lemma}\label{lemma:bernstein:nonparam:v2} Suppose $f,g\in\cF$ satisfy $n\|f-g\|_{\LtwoP}^2\ge C^2\delta^2$ for some $C>0$. Suppose further that $n\|f-\bar{f}\|_{\LtwoP}^2<\delta^2$. Then $n\|f-\bar{f}\|_{L_2(\PP_n)}^2 \le 2\delta^2$ and  $n\|f-g\|_{L_2(\PP_n)}^2 \ge (C^2-1)\delta^2$ both simultaneously hold with probability at least \[1 -\exp\left(-\frac{\delta^2}{20L}\right)- \exp\left(-\frac{\delta^2}{4L(C^2 + 1)}\right). \]
\end{lemma}

The next lemma considers hypothesis testing whether the true function is $f$ or $g$ when $f,g\in\cF$ are at least $\LtwoP$-distance $\delta/\sqrt{n}$ apart. When $f$ is $\delta/\sqrt{n}$ close to the truth, the probability our test claims $g$ is the right choice is low, and similarly, when $g$ is close to the truth, the probability our test claims $f$ is the right choice is low. 

\begin{lemma}\label{lemma:testing:nonparam:gaussianbound} Suppose we are testing $H_0:\bar{f}=f$ vs $H_1:\bar{f}=g$ where $ n\|f-g\|_{\LtwoP}^2\ge C^2\delta^2$ for some $C>3$. Then the test $\psi(\vec{Y}) = \mathbbm{1}\big( \|\vec{Y}-f(\vec{X})\|_2^2 \geq \|\vec{Y}-g(\vec{X})\|_2^2\big)$  satisfies
\begin{align*}
    \MoveEqLeft\sup_{\bar{f}: n\|f-\bar{f}\|_{\LtwoP}^2<\delta^2}\PP_{\bar{f}}(\psi = 1) \vee \sup_{\bar{f}: n\|g-\bar{f}\|_{\LtwoP}^2<\delta^2}\PP_{\bar{f}}(\psi = 0) \\ &\le 3 \exp\left(-\Lambda(C,\sigma,L)\cdot\delta^2\right),
\end{align*} where \begin{align} \label{eq:nonparam:exp_triple_minimum}
    \Lambda(C,\sigma,L) = \min\left\{\frac{\left(\sqrt{C^2-1}-2\sqrt{2}\right)^2}{8\sigma^2}, \frac{1}{20L},\frac{1}{4L(C^2+1)} \right\}.
\end{align}
\end{lemma}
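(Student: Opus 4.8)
The plan is to condition on the design $\vec{X}$, use Lemma~\ref{lemma:bernstein:nonparam} to pin down the two empirical norms $\|f-g\|_{L_2(\mathbb{P}_n)}$ and $\|f-\bar f\|_{L_2(\mathbb{P}_n)}$ on a high-probability $\vec{X}$-event, and then finish, conditionally on $\vec{X}$, with a one-sided Gaussian tail bound on the noise. By symmetry in $f$ and $g$ it suffices to bound $\mathbb{P}_{\bar f}(\psi=1)$ for an arbitrary $\bar f\in\mathcal{F}$ with $n\|f-\bar f\|_{L_2(\mathbb{P}_X)}^2<\delta^2$: the event $\{\psi=0\}=\{\|\vec{Y}-g(\vec{X})\|_2^2>\|\vec{Y}-f(\vec{X})\|_2^2\}$ coincides, up to a $\mathbb{P}_{\bar f}$-null set, with the ``$f\leftrightarrow g$'' analogue of $\{\psi=1\}$, the separation hypothesis $n\|f-g\|_{L_2(\mathbb{P}_X)}^2\ge C^2\delta^2$ is itself symmetric, and the same estimate then applies with $g$ in place of $f$. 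Write $\vec{Y}=\bar f(\vec{X})+\xi$ with $\xi\sim N(0,\sigma^2 I_n)$, and set $u:=f(\vec{X})-\bar f(\vec{X})$ and $v:=g(\vec{X})-f(\vec{X})$, so that $\vec{Y}-f(\vec{X})=\xi-u$ and $\vec{Y}-g(\vec{X})=\xi-u-v$. Expanding both squared norms, $\|\vec{Y}-f(\vec{X})\|_2^2-\|\vec{Y}-g(\vec{X})\|_2^2=2\langle \xi,v\rangle-2\langle u,v\rangle-\|v\|_2^2$, hence $\{\psi=1\}=\{2\langle\xi,v\rangle\ge\|v\|_2^2+2\langle u,v\rangle\}$, where $\|u\|_2=\sqrt n\,\|f-\bar f\|_{L_2(\mathbb{P}_n)}$ and $\|v\|_2=\sqrt n\,\|f-g\|_{L_2(\mathbb{P}_n)}$.

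The present hypotheses are exactly those of Lemma~\ref{lemma:bernstein:nonparam}, so there is a $\vec{X}$-measurable event $\mathcal{E}$, with $\mathbb{P}(\mathcal{E}^c)\le\exp(-3\delta^2/(64B_{\mathcal{F}}^2))+\exp(-3\delta^2/(16B_{\mathcal{F}}^2(3C^2+1)))$, on which $\|u\|_2\le\sqrt{2}\,\delta$ and $\|v\|_2\ge\sqrt{C^2-1}\,\delta$. Working on $\mathcal{E}$ and conditioning on $\vec{X}$, Cauchy--Schwarz gives $\|v\|_2^2+2\langle u,v\rangle\ge\|v\|_2^2-2\|u\|_2\|v\|_2=\|v\|_2(\|v\|_2-2\|u\|_2)\ge\|v\|_2(\|v\|_2-2\sqrt{2}\,\delta)=:\tau$. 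Since $C>3$ forces $\sqrt{C^2-1}\ge 2\sqrt{2}$, the map $t\mapsto t(t-2\sqrt{2}\,\delta)$ is nonnegative and nondecreasing on $[\sqrt{C^2-1}\,\delta,\infty)$, so $\tau\ge 0$ and, dividing by $\|v\|_2>0$, $\tau/\|v\|_2=\|v\|_2-2\sqrt{2}\,\delta\ge(\sqrt{C^2-1}-2\sqrt{2})\,\delta\ge 0$. As $\langle\xi,v\rangle\mid\vec{X}\sim N(0,\sigma^2\|v\|_2^2)$, the one-sided Gaussian tail bound yields
\[
\mathbb{P}_{\bar f}(\psi=1\mid\vec{X})\le\exp\Big(-\frac{\tau^2}{8\sigma^2\|v\|_2^2}\Big)
=\exp\Big(-\frac{(\tau/\|v\|_2)^2}{8\sigma^2}\Big)
\le\exp\Big(-\frac{(\sqrt{C^2-1}-2\sqrt{2})^2\delta^2}{8\sigma^2}\Big).
\]

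Combining, $\mathbb{P}_{\bar f}(\psi=1)\le\mathbb{E}\big[\mathbbm{1}_{\mathcal{E}}\,\mathbb{P}_{\bar f}(\psi=1\mid\vec{X})\big]+\mathbb{P}(\mathcal{E}^c)$ is at most the sum of the three quantities $\exp(-(\sqrt{C^2-1}-2\sqrt{2})^2\delta^2/(8\sigma^2))$, $\exp(-3\delta^2/(64B_{\mathcal{F}}^2))$ and $\exp(-3\delta^2/(16B_{\mathcal{F}}^2(3C^2+1)))$; bounding each by $\exp(-L(C,\sigma,B_{\mathcal{F}})\delta^2)$ with $L$ as in \eqref{eq:nonparam:exp_triple_minimum} gives $\mathbb{P}_{\bar f}(\psi=1)\le 3\exp(-L(C,\sigma,B_{\mathcal{F}})\delta^2)$, uniformly over the admissible $\bar f$. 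Repeating the argument with $f$ and $g$ interchanged bounds $\mathbb{P}_{\bar f}(\psi=0)$ the same way, and taking the maximum of the two suprema completes the proof.

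I expect the main obstacle to be the bookkeeping in the conditional step rather than anything deep: one must (i) note that the good event $\mathcal{E}$ supplied by Lemma~\ref{lemma:bernstein:nonparam} depends only on $\vec{X}$, so conditioning on $\vec{X}$ leaves a genuine one-dimensional Gaussian in $\langle\xi,v\rangle$, and (ii) estimate the threshold in the right order --- divide by $\|v\|_2$ first and only then invoke $\|v\|_2\ge\sqrt{C^2-1}\,\delta$ --- in order to land on the exponent $(\sqrt{C^2-1}-2\sqrt{2})^2/(8\sigma^2)$ rather than a weaker one; the nonnegativity of the threshold, which is what makes the one-sided Gaussian bound applicable, is precisely where the hypothesis $C>3$ is used. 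Finally, if the noise is only $\mathrm{SG}(0,\sigma^2)$ then $\langle\xi,v\rangle\sim\mathrm{SG}(0,\sigma^2\|v\|_2^2)$ and the identical tail bound holds, so the statement and proof are unchanged.
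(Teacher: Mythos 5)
Your proof is correct and follows essentially the same route as the paper's: expand the difference of squared residuals, invoke Lemma~\ref{lemma:bernstein:nonparam} to get the $\vec{X}$-measurable good event controlling $\|u\|_2$ and $\|v\|_2$, apply the one-sided Gaussian tail conditional on $\vec{X}$, and union the three exponential terms. The only cosmetic difference is that you treat $2\langle\xi,v\rangle$ as the centered Gaussian and push the deterministic part into the threshold, whereas the paper views the whole statistic as a Gaussian with negative mean; these are equivalent and yield the identical exponent.
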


We now build on the previous lemma by showing the function in a $\delta$-covering closest to our data will also be $\delta$-close (in the $\LtwoP$-distance) to the true function with high probability. 

\begin{lemma} \label{lemma:nonparam:intermediate_error} Let $f_1,\dots,f_M$ be a $\delta$-covering of $\cF'\subseteq\cF$ and suppose $\bar{f}\in \cF'$. Let \[i^{\ast}= \argmin_{i\in[M]} \sum_{j=1}^n (Y_j - f_i(X_j))^2.\] Then  recalling the definition of $\Lambda(C,\sigma, L)$ in \eqref{eq:nonparam:exp_triple_minimum}, we have for $C>3$ that \begin{align*}\PP_{\bar{f}}\left(\|f_{i^{\ast}}- \bar{f}\|_{\LtwoP} > (C +1)\delta\right)  &\le 3 M\exp\left(-\Lambda(C,\sigma,L)\cdot n\delta^2\right).
\end{align*}
\end{lemma}

Note that the previous lemma still holds if we have an equivalent norm, i.e., $c_1 \|f-g\|_{\LtwoP} \leq \|f - g\|' \leq c_2 \|f-g\|_{\LtwoP}$ for some absolute constants $c_1,c_2>0$. As a consequence, we can build the local packing and covering sets with the norm $\|\cdot\|'$ in place of $\|\cdot\|_{\LtwoP}$ and obtain the same minimax rate. 

\begin{lemma} \label{lemma:equivalent:norms} Suppose there exists an equivalent norm $\|\cdot\|'$ on $\cF$ to $\|\cdot\|_{\LtwoP}$, i.e., there exists $c_1,c_2>0$ such that $c_1\|f-g\|_{\LtwoP}\le \|f-g\|' \le c_2\|f-g\|_{\LtwoP}$. Suppose $f_1,\dots, f_M$ is a $\delta$-covering in the $\|\cdot\|'$ norm of $\cF'\subseteq\cF$ with $\bar{f}\in\cF'$ and set $i^{\ast}= \argmin_{i\in[M]} \sum_{j=1}^n (Y_j - f_i(X_j))^2.$  Then so long as  $C>4c_1^{-1}c_2-1$, setting $\tilde C = c_1c_2^{-1}(C+1)-1$, we have \begin{align*}\PP_{\bar{f}}\left(\|f_{i^{\ast}}- \bar{f}\|' > (C +1)\delta\right)  &\le 3 M\exp\left(-c_1^{-2}\Lambda(\tilde C,\sigma,L)\cdot n\delta^2\right).
\end{align*} 
\end{lemma}
    \begin{proof}
        Observe that $f_1,\dots,f_M$ is a $c_1^{-1}\delta$-covering of $\cF'$ in the $\LtwoP$-norm. To see this, if $f\in \cF'$, there exists $f_i\in\cF'$ such that $\|f_i-f\|'\le \delta$. Hence \[\|f_i-f\|_{\LtwoP}\le c_1^{-1}\|f_i-f_j\|' \le  c_1^{-1}\delta.\]  Moreover, note that $\|f_{i^{\ast}}- \bar{f}\|' > (C +1)\delta$ implies $\|f_{i^{\ast}}- \bar{f}\|_{\LtwoP}> c_2^{-1}(C +1)\delta$. Thus, we have by Lemma \ref{lemma:nonparam:intermediate_error} that
        \begin{align*}
            \PP_{\bar{f}}\left(\|f_{i^{\ast}}- \bar{f}\|' > (C +1)\delta\right) &\le \PP_{\bar{f}}\left(\|f_{i^{\ast}}- \bar{f}\|_{\LtwoP}>c_2^{-1}(C +1)\delta \right) \\
            &=\PP_{\bar{f}}\left(\|f_{i^{\ast}}- \bar{f}\|_{\LtwoP}>c_1c_2^{-1}(C +1)c_1^{-1}\delta \right) \\
            &= \PP_{\bar{f}}\left(\|f_{i^{\ast}}- \bar{f}\|_{\LtwoP}>(\tilde C +1)c_1^{-1}\delta \right) \\
            &\le  3 M\exp\left(-c_1^{-2}\Lambda(\tilde C,\sigma,L)\cdot n\delta^2\right).
        \end{align*} We used $\tilde C = c_1c_2^{-1}(C+1)-1$ in the third line and then applied Lemma \ref{lemma:nonparam:intermediate_error} with $c_1^{-1}\delta$ and $\tilde C$. Note that $C>4c_1^{-1}c_2-1$ implies $\tilde C>3$.
    \end{proof}

\begin{remark} \label{subgaussian_remark_for_lemma_testing} Lemmas \ref{lemma:testing:nonparam:gaussianbound},  \ref{lemma:nonparam:intermediate_error}, and \ref{lemma:equivalent:norms} can be generalized to our sub-Gaussian setting where each $\xi_i\sim\mathrm{SG}(0,\sigma^2)$. The proof is in the appendix.
\end{remark}

Another property we will need is the monotonicity of the local metric entropy. It is easy to check this holds for convex sets (see, for instance, \cite[Lemma 9]{lecamshamindramatey2022} or \citet[Lemma II.8]{neykov2022minimax}). The proof in the general star-shaped setting is given in \citet[Lemma 1.4]{prasadan2024informationtheoreticlimitsrobust}, and easily transfers to our function class setting by just swapping out the Euclidean norm with our $\LtwoP$ norm.

\begin{lemma}\label{simple:lemma:monotone:nonparam}The function $\varepsilon \mapsto \cMFloc(\varepsilon, c)$ is monotone non-increasing.
\end{lemma}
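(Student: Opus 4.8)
The plan is to exploit the convexity of $\mathcal{F}$ through a rescaling argument. The subtlety worth flagging at the outset is that monotonicity is \emph{not} immediate from the definition: as $\varepsilon$ grows, the packing radius $\varepsilon/c$ grows (which tends to decrease packing numbers) while the ball $B(f,\varepsilon)$ also grows (which tends to increase them), so the two effects pull in opposite directions. The point of the proof is that, thanks to convexity, a packing at the larger scale can be contracted to a packing at the smaller scale with the same cardinality, so the competition is resolved in favor of the smaller $\varepsilon$.

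Fix $0 < \varepsilon_1 < \varepsilon_2$ and an arbitrary center $f \in \mathcal{F}$. First I would take any finite $\varepsilon_2/c$-packing $\{f_1,\dots,f_M\}$ of $B(f,\varepsilon_2)\cap\mathcal{F}$, so that $\|f_i - f\|_{L_2(\mathbb{P}_X)} \le \varepsilon_2$ for every $i$ and $\|f_i - f_j\|_{L_2(\mathbb{P}_X)} > \varepsilon_2/c$ for $i \ne j$. Then define the contracted points $g_i := f + \tfrac{\varepsilon_1}{\varepsilon_2}(f_i - f)$. Since $\tfrac{\varepsilon_1}{\varepsilon_2}\in(0,1)$ and $\mathcal{F}$ is convex with $f, f_i \in \mathcal{F}$, each $g_i$ lies in $\mathcal{F}$; this is the one place convexity is used.

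Next I would verify the two estimates that make $\{g_i\}$ a valid local packing at scale $\varepsilon_1$. Homogeneity of the norm gives $\|g_i - f\|_{L_2(\mathbb{P}_X)} = \tfrac{\varepsilon_1}{\varepsilon_2}\|f_i - f\|_{L_2(\mathbb{P}_X)} \le \varepsilon_1$, so $g_i \in B(f,\varepsilon_1)\cap\mathcal{F}$; and $\|g_i - g_j\|_{L_2(\mathbb{P}_X)} = \tfrac{\varepsilon_1}{\varepsilon_2}\|f_i - f_j\|_{L_2(\mathbb{P}_X)} > \tfrac{\varepsilon_1}{\varepsilon_2}\cdot\tfrac{\varepsilon_2}{c} = \tfrac{\varepsilon_1}{c}$ for $i \ne j$. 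Hence $\{g_1,\dots,g_M\}$ is an $\varepsilon_1/c$-packing of $B(f,\varepsilon_1)\cap\mathcal{F}$, so $M(\varepsilon_1/c, B(f,\varepsilon_1)\cap\mathcal{F}) \ge M$. Taking the supremum over all finite sub-packings $\{f_i\}$ of $B(f,\varepsilon_2)\cap\mathcal{F}$ — and noting that if this packing number is infinite the same construction applied to arbitrarily large finite sub-packings forces $M(\varepsilon_1/c, B(f,\varepsilon_1)\cap\mathcal{F}) = \infty$ as well — yields $M(\varepsilon_1/c, B(f,\varepsilon_1)\cap\mathcal{F}) \ge M(\varepsilon_2/c, B(f,\varepsilon_2)\cap\mathcal{F})$.

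Finally, since the inequality above holds for every $f \in \mathcal{F}$ with the same center on both sides, taking the supremum over $f \in \mathcal{F}$ gives $M_{\mathcal{F}}^{\operatorname{loc}}(\varepsilon_1,c) \ge M_{\mathcal{F}}^{\operatorname{loc}}(\varepsilon_2,c)$, which is exactly the asserted monotonicity. I do not expect a real obstacle in this lemma; the only care needed is interpreting ``largest cardinality of a packing'' as a supremum in $\mathbb{N}\cup\{\infty\}$ so the infinite case is handled uniformly, and observing that the contraction map keeps us inside $\mathcal{F}$ — the step that genuinely uses the convexity hypothesis.
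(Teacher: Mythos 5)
Your proof is correct and follows essentially the same route as the paper's: contract a packing of $B(f,\varepsilon_2)\cap\mathcal{F}$ toward the center via the convex combination $g_i = (1-\varepsilon_1/\varepsilon_2)f + (\varepsilon_1/\varepsilon_2)f_i$, check membership in the smaller ball and the rescaled separation, then pass to the supremum over centers. The only cosmetic difference is that the paper handles the supremum over centers by contradiction while you do it directly, and you make the infinite-packing case explicit; neither changes the substance.
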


Now we return to our algorithm and quantify its error in the squared $\LtwoP$-distance. The structure of the proof can be seen as a special case of the robust analogue in \citet[Theorem 3.8]{prasadan2024informationtheoreticlimitsrobust}, and an adaptation of \citet[Theorem II.10]{neykov2022minimax} (with some important corrections made to the argument in the latter). We then show in the subsequent theorem that the minimum of this error and the squared diameter is in fact the minimax rate. By Remark \ref{subgaussian_remark_for_lemma_testing}, the proof holds for sub-Gaussian noise as well.

\begin{theorem} \label{theorem:nonparam:upperbound:main}  Let $J^{\ast}$ be the maximal integer $J\ge 1$ with $\varepsilon_{J} = \frac{d\sqrt{\Lambda(c/2-1,\sigma,L)} }{2^{J-2}c}$ satisfying\begin{equation} \label{eq:upper_bound_condition:main_theorem} n\varepsilon_{J}^2 > 2\log \left[\cMFloc\left(\frac{\varepsilon_{J}\cdot c}{\sqrt{\Lambda(c/2 - 1,\sigma,L)}}, 2c\right)\right]^2\vee \log 2,\end{equation} and $J^{\ast}=1$ if this condition is never satisfied. Let $f^{\ast\ast}(\vec{X},\vec{Y})$ be the output of at least $J^{\ast}$ steps of  Algorithm \ref{algo_upperbound_nonparametric}. Then $\EE_{\vec{X},\vec{Y}}\|\bar{f} - f^{\ast\ast}(\vec{X},\vec{Y})\|_{\LtwoP}^2 \lesssim \varepsilon_{J^{\ast}}^2\wedge d^2$, up to absolute constants depending only on $c$, $L$, and $\sigma$. We defined $\Lambda(C,\sigma, L)$ in \eqref{eq:nonparam:exp_triple_minimum}, where $C=\tfrac{c}{2}-1$ and $c$ is the sufficiently large absolute constant we use in the definition of local metric entropy.
\end{theorem}

\begin{remark} \label{remark:swapping:2c:for:c} By similar logic to \citet[Remark 3.9]{prasadan2024informationtheoreticlimitsrobust}, in \eqref{eq:upper_bound_condition:main_theorem} we can without loss of generality replace $2c$ with $c$ in $\cMFloc$. This is because we could have used $2c$ in our lower bounds instead and only changed the result by absolute constants.
\end{remark}

\begin{theorem} \label{theorem:bounded:minimax:achieved} Define $\varepsilon^{\ast}:=\sup\{\varepsilon\ge 0:n\varepsilon^2 \le \log \cMFloc(\varepsilon, c)\}$ where $c$ is a sufficiently large absolute constant. Then the minimax rate is ${\varepsilon^{\ast}}^2\wedge d^2$ up to absolute constants.
\end{theorem}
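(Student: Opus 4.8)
The plan is to combine the Fano lower bound (Lemma~\ref{lemma:nonparam:lowerbound}) with the algorithmic upper bound (Theorem~\ref{theorem:nonparam:upperbound:main}), using the monotonicity of the local entropy (Lemma~\ref{simple:lemma:monotone:nonparam}) to reconcile the \emph{three different} absolute constants that occur: in the inequality $n\varepsilon^2\le\log M_{\mathcal{F}}^{\operatorname{loc}}(\varepsilon,c)$ defining $\varepsilon^{\ast}$, in the Fano condition $\log M_{\mathcal{F}}^{\operatorname{loc}}(\varepsilon,c)>4\bigl(\tfrac{n\varepsilon^2}{2\sigma^2}\vee\log 2\bigr)$, and in the stopping condition \eqref{eq:upper_bound_condition:main_theorem}. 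We may assume $d>0$. Two preliminary observations drive everything. (i) By convexity of $\mathcal{F}$, a chord of length $\ge\varepsilon$ contains $\gtrsim c$ points that are pairwise $\varepsilon/c$-separated and lie in one ball of radius $\varepsilon$, so $M_{\mathcal{F}}^{\operatorname{loc}}(\varepsilon,c)\ge c$ for every $\varepsilon\le d$; taking $c$ large enough that $\log c>4\log 2$ (possible since $c$ is a sufficiently large constant), this shows $\varepsilon^{\ast}\ge\min(d,\sqrt{\log c/n})>0$ and lets us discard every $\log 2$ term whenever the scale in question is $\le d$. (ii) ("step-down") If $\log M_{\mathcal{F}}^{\operatorname{loc}}(r,c)\ge\theta\,nr^2$ for a constant $\theta>0$, then by monotonicity $\log M_{\mathcal{F}}^{\operatorname{loc}}(r/2^m,c)\ge\log M_{\mathcal{F}}^{\operatorname{loc}}(r,c)\ge\theta\,4^m n(r/2^m)^2\ge n(r/2^m)^2$ once $4^m\ge 1/\theta$; taking the least such $m$ (a constant depending only on $\theta$) gives $r/2^m\le\varepsilon^{\ast}$, i.e. $r\le 2^m\varepsilon^{\ast}$. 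Run on scales just below $\varepsilon^{\ast}$, the same computation upgrades the weak bound $n\varepsilon^2\le\log M_{\mathcal{F}}^{\operatorname{loc}}(\varepsilon,c)$ to $\log M_{\mathcal{F}}^{\operatorname{loc}}(\varepsilon/2^{\ell},c)>\tfrac{2}{\sigma^2}n(\varepsilon/2^{\ell})^2$ for a constant $\ell=\ell(\sigma)$.

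\textbf{Lower bound.} It suffices to exhibit a scale $\varepsilon_0\asymp\varepsilon^{\ast}\wedge d$ satisfying the hypothesis of Lemma~\ref{lemma:nonparam:lowerbound}. If $\varepsilon^{\ast}\le d$: by monotonicity the defining set of $\varepsilon^{\ast}$ is downward closed, so $n\varepsilon^2\le\log M_{\mathcal{F}}^{\operatorname{loc}}(\varepsilon,c)$ for every $\varepsilon<\varepsilon^{\ast}$; set $\varepsilon_0=\varepsilon/2^{\ell}$, so that $\varepsilon_0<\varepsilon^{\ast}\le d$ gives $\log M_{\mathcal{F}}^{\operatorname{loc}}(\varepsilon_0,c)\ge\log c>4\log 2$ by (i), while (ii) gives $\log M_{\mathcal{F}}^{\operatorname{loc}}(\varepsilon_0,c)>\tfrac{2}{\sigma^2}n\varepsilon_0^2$; hence Lemma~\ref{lemma:nonparam:lowerbound} applies, and letting $\varepsilon\uparrow\varepsilon^{\ast}$ the minimax risk is $\gtrsim\varepsilon^{\ast 2}$. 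If $\varepsilon^{\ast}>d$: then $d$ lies in the defining set, i.e. $nd^2\le\log M_{\mathcal{F}}^{\operatorname{loc}}(d,c)$, and the same step-down from $d$ produces $\varepsilon_0=d/2^{\ell}\le d$ with $\varepsilon_0\asymp d$ meeting the hypothesis, giving minimax risk $\gtrsim d^2$. In both cases the risk is $\gtrsim(\varepsilon^{\ast}\wedge d)^2$.

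\textbf{Upper bound.} Running Algorithm~\ref{algo_upperbound_nonparametric} for $J^{\ast}$ steps, Theorem~\ref{theorem:nonparam:upperbound:main} gives $\sup_{\bar f}\E\|\hat f-\bar f\|_{L_2(\mathbb{P}_X)}^2\le\overline{C}\,\varepsilon_{J^{\ast}}^2$, and $\varepsilon_{J^{\ast}}\le\varepsilon_1=\tfrac{2d\sqrt{L}}{c}$ (with $L=L(c/2-1,\sigma,B_{\mathcal{F}})$) already yields $\varepsilon_{J^{\ast}}^2\lesssim d^2$. For $\varepsilon_{J^{\ast}}^2\lesssim\varepsilon^{\ast 2}$: maximality of $J^{\ast}$ forces \eqref{eq:upper_bound_condition:main_theorem} to fail at $J^{\ast}+1$; writing $r:=d/2^{J^{\ast}-1}$ (so $\varepsilon_{J^{\ast}}=\tfrac{2\sqrt{L}}{c}r$ and the argument of $M_{\mathcal{F}}^{\operatorname{loc}}$ at step $J^{\ast}+1$ is exactly $r$), this failure reads $\tfrac{nLr^2}{c^2}\le 2\log M_{\mathcal{F}}^{\operatorname{loc}}(r,c)\vee\log 2$. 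If the maximum is $\log 2$ we are in the near-parametric regime, $n\varepsilon_{J^{\ast}}^2\lesssim 1$, and combining with (i) and $\varepsilon_{J^{\ast}}^2\lesssim d^2$ gives $\varepsilon_{J^{\ast}}^2\lesssim\min(1/n,d^2)\lesssim\varepsilon^{\ast 2}\wedge d^2$. Otherwise $\log M_{\mathcal{F}}^{\operatorname{loc}}(r,c)\ge\tfrac{L}{2c^2}\,nr^2$, and step-down (ii) with $\theta=\tfrac{L}{2c^2}$ gives $r\lesssim\varepsilon^{\ast}$, hence $\varepsilon_{J^{\ast}}\lesssim\varepsilon^{\ast}$. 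Thus the minimax risk is $\le\overline{C}\,\varepsilon_{J^{\ast}}^2\lesssim\varepsilon^{\ast 2}\wedge d^2$ (using also the trivial estimator of risk $\le d^2$), matching the lower bound; by Remark~\ref{subgaussian_remark_for_lemma_testing} the same upper bound holds under sub-Gaussian noise.

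The crux I anticipate is the passage, in the upper bound, from the failure of \eqref{eq:upper_bound_condition:main_theorem} at $J^{\ast}+1$ --- which only lower-bounds $\log M_{\mathcal{F}}^{\operatorname{loc}}(r,c)$ by an $O(L/c^2)$ fraction of $nr^2$, a priori far weaker than the defining condition of $\varepsilon^{\ast}$ --- to the clean comparison $r\lesssim\varepsilon^{\ast}$ (and symmetrically, in the lower bound, absorbing the $\sigma^2$ loss); the geometric step-down using monotonicity of the local entropy is exactly what converts a constant-factor-loose entropy bound at one scale into a tight one at a nearby scale, and the convexity observation $M_{\mathcal{F}}^{\operatorname{loc}}(\varepsilon,c)\ge c$ for $\varepsilon\le d$ is what disposes of the degenerate $\log 2$/near-parametric corner where those entropy bounds would otherwise be vacuous.
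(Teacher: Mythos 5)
Your proposal is correct, and it assembles the same ingredients as the paper's proof (Lemma~\ref{lemma:nonparam:lowerbound}, Theorem~\ref{theorem:nonparam:upperbound:main}, monotonicity of $M_{\mathcal{F}}^{\operatorname{loc}}$, and the convexity/chord-packing observation), but the bookkeeping is organized differently enough to be worth noting. For the lower bound you and the paper do essentially the same thing: shrink the scale by a constant factor so that monotonicity absorbs the $\sigma^2$ loss in the Fano condition; your dyadic ``step-down'' is just a discretized version of the paper's rescaling $\delta^{\ast}=\alpha\varepsilon^{\ast}$. The genuine divergence is twofold. First, the paper isolates the degenerate regime by splitting on $n{\varepsilon^{\ast}}^2\gtrless 8\log 2$ and showing $d\lesssim\varepsilon^{\ast}\lesssim n^{-1/2}$ there, whereas you deploy the chord-packing bound $M_{\mathcal{F}}^{\operatorname{loc}}(\varepsilon,c)\gtrsim c$ for $\varepsilon\le d$ globally, which disposes of every $\log 2$ term at once and replaces the paper's case split with the split $\varepsilon^{\ast}\gtrless d$; this is arguably cleaner, though you should note the minor point that the packing argument needs $\varepsilon$ strictly below $d$ (or a limiting argument) since the diameter may not be attained. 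Second, for the upper bound the paper verifies that an inflated scale $\delta=D\sqrt{2}\varepsilon^{\ast}$ satisfies condition \eqref{eq:upper_bound_condition:main_theorem} and then invokes monotonicity of the map $\psi$ to conclude $\varepsilon_{J^{\ast}}\le 2\delta$, whereas you work directly from the \emph{failure} of \eqref{eq:upper_bound_condition:main_theorem} at $J^{\ast}+1$, split on which term achieves the maximum, and step down; both routes rely on the same fact (scales above $\varepsilon^{\ast}$ violate the defining inequality, scales below satisfy it), and yours is slightly more direct but loses an extra constant $2^{m}$ with $4^{m}\gtrsim c^{2}/L$, which is harmless here since $c$ and $L$ depend only on $\sigma$, $B_{\mathcal{F}}$ and universal constants. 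One imprecision to tidy: the negation of \eqref{eq:upper_bound_condition:main_theorem} says $n\varepsilon_{J^{\ast}+1}^2$ is at most the maximum of the two terms, so your dichotomy should be phrased as ``either $n\varepsilon_{J^{\ast}+1}^2\le\log 2$ or $n\varepsilon_{J^{\ast}+1}^2\le 2\log M_{\mathcal{F}}^{\operatorname{loc}}(r,c)$'' rather than as a statement about which term is the maximum; as written the two cases are still exhaustive, so the argument goes through.
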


\begin{remark}[Without Total Boundedness] \label{remark:total:boundedness} Consider the case where  the local packing number is infinite for some $\varepsilon$, i.e., there are arbitrarily large $\varepsilon/c$-packings of sets of the form $\cF\cap B(f,\varepsilon)$ for $f\in\cF$. Note that Lemma \ref{lemma:fano:nonparam} and Lemma \ref{lemma:nonparam:mutualinfo} still hold since they only require the existence of $\varepsilon$-separated sets. For the lower bound result in Lemma \ref{lemma:nonparam:lowerbound}, we adapt the proof by picking an $f$ such that the maximal $(\varepsilon/c)$-packing set of $\cF\cap B(f,\varepsilon)$ is infinite. Then we use a finite subset of that infinite packing set and apply the same logic as in the rest of the proof.

For the upper bound, the algorithm may require solving an optimization over infinitely many functions, but our results still hold. Lemma \ref{lemma:bernstein:nonparam:v2} and Lemma \ref{lemma:testing:nonparam:gaussianbound} do not rely on any packing constructions but simply a concentration result. Lemma \ref{lemma:nonparam:intermediate_error} is a statement about any packing set, not necessarily a maximal or infinite one. Monotonicity of the local metric entropy is unaffected. The arguments within the proof of Theorem \ref{theorem:nonparam:upperbound:main} and Theorem \ref{theorem:bounded:minimax:achieved} also hold either trivially (e.g., that the probability is bounded by infinity) or a key hypothesis such as \eqref{eq:upper_bound_condition:main_theorem} implies the local metric entropy is finite. The similar proofs in the subsequent adaptive setting will also be unaffected.
\end{remark}

\section{Adaptive Bound for Convex Classes}
\label{section:nonparam:adaptive}

We relax the definition of local metric entropy by removing the supremum and instead considering the pointwise value of the local metric entropies. This lets us produce an estimator that is adaptive to the true point $\bar{f}$, which is a difference from the work in \citet{yang1999information} and \citet{mendelson_2017_local_versus_global}, and to our knowledge is the first example of an adaptive estimator in this general setting. However, we no longer permit arbitrary star-shaped function classes and require convexity. In the subsequent section we will apply this adaptivity result to sparse vectors in the $\ell_1$ ball and monotone functions that are piece-wise constant on $k$ hyper-rectangles.

\begin{definition}[Adaptive Local Entropy]\label{local:adaptive:entropy:def:nonparam} Recall we defined $\cM(r,\cF)$ as the largest cardinality of a $r$-packing set in the $\LtwoP$ norm of $\cF$. Let $c>0$. Then we define \begin{align*}
    \cMFadloc(f, \varepsilon, c) = \cM(\varepsilon/c, B(f, \varepsilon) \cap \cF).
\end{align*} Then $\log \cMFadloc(f, \varepsilon, c)$ is the adaptive local entropy of $\cF$ at $f$. 
\end{definition}

The adaptive local entropy appears in several existing works, including \citet{agglomeration_yang_2004} and \citet{wang_yang_2014}. A following lemma relates the adaptive local metric entropy of two nearby points.

\begin{lemma} \label{lemma:adloc:technical} Let $f,g\in\cF$ satisfy $\|f-g\|_{\LtwoP}\le \delta$. Then $\cMFadloc(f, \varepsilon, c) \le  \cMFadloc(g, 2\varepsilon, 2c)$ provided $\varepsilon\ge\delta$.
\end{lemma}
    \begin{proof}
        If we show $B(f,\varepsilon)\subseteq B(g,2\varepsilon)$, then the cardinality of the largest $(\varepsilon/c)$-packing of $B(f,\varepsilon)\cap\cF$ will be less than or equal to the cardinality of the largest $(2\varepsilon/2c)$-packing of $B(g,2\varepsilon)\cap\cF$. Well, if $h\in B(f,\varepsilon)$, then $\|h-f\|_{\LtwoP}\le \varepsilon$. Using the triangle inequality, \[\|h-g\|_{\LtwoP} \le \|h-f\|_{\LtwoP} +\|f-g\|_{\LtwoP} \le \varepsilon + \delta  \le 2\varepsilon,\] showing that $h\in B(g,2\varepsilon)$.
    \end{proof}

\begin{lemma}\label{lemma:monotone:nonparam:adaptive}Assume $\cF$ is convex and pick $f\in\cF$. Then the function $\varepsilon \mapsto \cMFadloc(f,\varepsilon, c)$ is non-increasing, and the function $c \mapsto \cMFadloc(f,\varepsilon, c)$ is non-decreasing. 
\end{lemma} 
    \begin{proof} We prove the first claim as the second has a similar argument. Pick $0<\varepsilon_1 < \varepsilon_2$. Let $f_1,\dots,f_N$ be a maximal $\varepsilon_2/c$ packing set of $B(f,\varepsilon_2)\cap\cF$, so that $N=\cMFadloc(f,\varepsilon_2,c)$. For each $1\le i\le N$, set $g_i=\alpha f_i + (1-\alpha)f$ where $\alpha = \tfrac{\varepsilon_1}{\varepsilon_2}\in(0,1)$, noting $g_i\in\cF$ by convexity. Then for each $i$ we have \begin{align*}
            \|g_i-f\|_{\LtwoP} &= \left\|\tfrac{\varepsilon_1}{\varepsilon_2}f_i+(1-\tfrac{\varepsilon_1}{\varepsilon_2})f-f\right\|_{\LtwoP} \\ &= \tfrac{\varepsilon_1}{\varepsilon_2}\|f_i-f\|_{\LtwoP} \\ &\le \tfrac{\varepsilon_1}{\varepsilon_2}\cdot\varepsilon_2=\varepsilon_1,
        \end{align*} using that $f_i\in B(f,\varepsilon_2)$. Thus, $g_1,\dots,g_N\in B(f, \varepsilon_1)\cap\cF$. Moreover, for $i\ne j$, we have \begin{align*}
            \|g_i-g_j\|_{\LtwoP} = \alpha\|f_i-f_j\|_{\LtwoP}>\tfrac{\varepsilon_1}{\varepsilon_2}\cdot \tfrac{\varepsilon_2}{c}=\tfrac{\varepsilon_1}{c},
        \end{align*} verifying $g_1,\dots,g_N$ form a (possibly non-maximal) $\varepsilon_1/c$-packing of $B(f, \varepsilon_1)\cap\cF$. Hence \[\cMFadloc(f,\varepsilon_1, c) \ge N=\cMFadloc(f,\varepsilon_2, c).\]
    \end{proof}

We now show that the main estimator is adaptive to the true point $\bar{f}$, in which the  key condition \eqref{eq:upper_bound_condition:main_theorem:adaptive} does not require taking a supremum over all $f\in\cF$ in the metric entropy term.  The $J^{\dagger}$ and $J^{\ast}$ that are defined below therefore differ from the $J^{\ast}$ of Theorem \ref{theorem:nonparam:upperbound:main}. As before, our proof also applies in the sub-Gaussian noise setting.

\begin{theorem} \label{theorem:nonparam:upperbound:main:adaptive}   Assume $\cF$ is convex. Let $J^{\dagger}$ be the maximal integer $J\ge 1$ with $\varepsilon_{J} = \frac{d\sqrt{\Lambda(\tfrac{c}{2}-1, \sigma, L)} }{2^{J-2}c}$ satisfying \begin{equation} \label{eq:upper_bound_condition:main_theorem:adaptive:preliminary} n\varepsilon_{J}^2 > \inf_{f\in\cF} 2\log \left[\cMFadloc(f, \tfrac{c\varepsilon_J}{\sqrt{\Lambda(\tfrac{c}{2}-1, \sigma, L)}},4c)\right]^2\vee \log 2\end{equation} and $J^{\dagger}=1$ if this condition is never satisfied. Let $f^{\ddagger}(\vec{X},\vec{Y})$ be the output of $J^{\ddagger}$ steps of
  Algorithm \ref{algo_upperbound_nonparametric} where $J^{\ddagger}\ge J^{\dagger}$.

Next, let $J^{\ast}$ be the maximal integer $J\ge 1$ with $\varepsilon_{J}$ satisfying \begin{equation} \label{eq:upper_bound_condition:main_theorem:adaptive} n\varepsilon_{J}^2 >  2\log \left[\cMFadloc(\bar{f}, \tfrac{c\varepsilon_J}{\sqrt{\Lambda(\tfrac{c}{2}-1, \sigma, L)}},4c)\right]^2\vee \log 2,\end{equation} noting the use of the true $\bar{f}\in\cF$. Set $J^{\ast}=1$ if this condition never occurs. Then $\EE_{\vec{X},\vec{Y}}\|\bar{f} - f^{\ddagger}(\vec{X},\vec{Y})\|_{\LtwoP}^2 \lesssim \varepsilon_{J^{\ast}}^2$ up to some universal constants, $L$, and $\sigma$. We defined $\Lambda(C, \sigma, L)$ in \eqref{eq:nonparam:exp_triple_minimum}, where $C=\tfrac{c}{2}-1$ and $c$ is some sufficiently large universal constant.
\end{theorem}

\section{Examples}
\label{section:nonparam:examples}

We now give some examples using our theory, most of which are known. The known examples include linear functionals over the $\ell_1$ ball, Lipschitz classes, and monotone functions in a multivariate setting. However, we obtain new minimax rates for linear functionals over ellipsoids. 

\subsection{Linear Functionals} \label{subsection:linear_functionals}

We begin with function spaces  defined as linear functionals over compact, star-shaped sets $K$ using sub-Gaussian isotropic data; such function spaces will not have a uniform bound on the sup-norm. For such spaces, local metric entropies in our function space using an $\LtwoP$-norm will be equivalent to local metric entropies in $\RR^p$ with the Euclidean 2-norm. We then specialize our results to the $\ell_1$-ball and ellipsoids, whose convexity enables use of our adaptive results.

We call a random variable $Z\in\RR^p$ isotropic if $\EE[Z]=0$ and $\EE[ZZ^T]=\mathbb{I}_p$. Moreover, we say $Z\in\RR^p$ is a sub-Gaussian random vector with mean $\mu\in\RR^p$ and variance proxy $\tau^2$ if for any $v\in\RR^p$ with $\|v\|_2=1$, the random variable $v^T(Z-\mu)$ is sub-Gaussian with mean $0$ and variance proxy $\tau^2$. 
 
Let $K\subset\RR^p$ be a compact, star-shaped set. Define $\cF=\{f_{\beta}\colon \RR^p\to \RR, f_{\beta}(x)=x^T\beta;\beta\in K\}$. Now let us draw i.i.d. isotropic sub-Gaussian vectors $X_1,\dots,X_n$ with mean $0$ and variance proxy $\tau^2>1$. Importantly, functions in $\cF$ are not bounded in sup-norm despite $K$ being compact. Observe that \begin{align*}
   \diam(\cF)^2 &= \sup_{f_{\beta_1},f_{\beta_2}\in\cF} \|f_{\beta_1}-f_{\beta_2}\|_{\LtwoP}^2 =\sup_{\beta_1,\beta_2\in K} \EE\left[(\beta_1-\beta_2)^TX\right]^2 \\
   &= \sup_{\beta_1,\beta_2\in K} \|\beta_1-\beta_2\|_2^2 = \diam(K)^2.
\end{align*} The third equality used the fact that $X$ is isotropic. Thus, the diameter of $\cF$ in the $\LtwoP$-norm is the diameter of $K$ which is finite.

Next, note that for any distinct $\beta_1,\beta_2\in K$, we have $\|\beta_1-\beta_2\|_2^{-1}\cdot (\beta_1-\beta_2)^TX\sim \mathrm{SG}(0,\tau^2)$. Let $W$ denote this quantity. By \citet[Proposition 2.5.2(ii)]{vershynin2018high}, for some absolute constant $\alpha>0$ we have $\EE|W^p| \le \tau^p \alpha^p p^{p/2}$. Then we compute that \begin{align*}
  \|f_{\beta_1}-f_{\beta_2}\|_{L_p(\PP_X)}^p &= \EE\left[(\beta_1-\beta_2)^TX\right]^p  \le  \|\beta_1-\beta_2\|_2^{p}\cdot \EE|W^p| \\ &\le  \|\beta_1-\beta_2\|_2^{p} \cdot \tau^p \alpha^p p^{p/2}.
\end{align*} Taking the $p$th root and noting by our above argument that $X$ being isotropic implies $\|\beta_1-\beta_2\|_2 = \|f_{\beta_1}-f_{\beta_2}\|_{\LtwoP}$, we have \[\|f_{\beta_1}-f_{\beta_2}\|_{L_p(\PP_X)} \le \tau\alpha  \|f_{\beta_1}-f_{\beta_2}\|_{\LtwoP}\sqrt{p}.\] Hence the $L$-sub-Gaussian condition in \eqref{eq:L:subgaussian:assumption} holds, so by Proposition \ref{proposition:L:subGaussian:implies:moment:condition}, our key condition \eqref{new:bernstein:sufficient:condition} holds.

Moreover, since $ \|f_{\beta_1}-f_{\beta_2}\|_{\LtwoP} = \|\beta_1-\beta_2\|_2$ holds for all $\beta_1,\beta_2\in K$ and since $B(f_{\beta},\varepsilon,\|\cdot\|_{\LtwoP})= \{f_{\beta'}:\beta'\in B_2^p(\beta,\varepsilon)\}$, it follows that any $\varepsilon/c$-packing of $B(f_{\beta},\varepsilon,\|\cdot\|_{\LtwoP})\cap\cF$ corresponds to a $\varepsilon/c$-packing of $B_2^p(\beta,\varepsilon)\cap K$ in the Euclidean 2-norm on $\RR^p$. Hence $\cMFloc(\varepsilon,c) = \cM_{K}^{\loc}(\varepsilon,c)$.

\subsubsection{Linear Functionals on the \texorpdfstring{$\ell_1$}{}-ball}

Let $K=B_1^p(0,1):= \{x\in\RR^p:\|x\|_1\le 1\}$ be the $\ell_1$-unit-ball. Let $\SS_1^{p-1} = \{x\in\RR^p:\|x\|_1= 1\}$ be the $\ell_1$-unit sphere, and similarly set $\SS_2^{p-1} = \{x\in\RR^p:\|x\|_2= 1\}$. We again take $\cF$ to be the set of linear functionals over $K$, noting that both $K$ and $\cF$ are  convex. Note that the equivalence of local metric entropies between our function space and $K$ in our linear functionals setting with isotropic and sub-Gaussian data also applies to the adaptive local metric entropy. Suppose $\beta^{\ast}$ is $s$-sparse, i.e., has at most $s$ non-zero components, and that $\beta^{\ast}\in \SS_1^{p-1}$. Our goal will be to use the adaptive local entropy to derive a minimax rate upper bound for $\cF$ under this sparsity assumption. Our result will match that of \citet{raskutti2011minimax}.

We will need some tools from \citet{chandrasekaran2012convex}. The authors begin with an atomic set $\cA$ which is a compact subset of $\RR^p$. The authors then construct an atomic norm induced by this atomic set using the gauge function, assuming $\cA$ is centrally symmetric about the origin. For our example, we take $\cA$ to be the set of $1$-sparse vectors of unit norm, so that the convex hull $\mathrm{conv}(\cA)$ is precisely the $\ell_1$-unit-ball $B_1^p(0,1)$. We then define the atomic norm induced by $\cA$ at a vector $x$ by $\|x\|_{\cA}=\inf\{t>0:x\in t\cdot \mathrm{conv}(\cA)\}$. Then clearly $\|x\|_{\cA}=\|x\|_1$ for $x\ne 0$, since the smallest $t>0$ such that $x/t\in B_1^p(0,1)$ is $\|x\|_1$, and $\|0\|_{\cA}=\|0\|_1$ since $0\in t\cdot B_1^p(0,1)$ for arbitrarily small $t>0$. Thus, our atomic set and atomic norm correspond exactly to the $\ell_1$-ball and $\ell_1$-norm. 

Next, recalling that the cone of a set $S\subseteq \RR^p$ is defined by $\mathrm{cone}(S):= S:=\{\alpha \cdot s:s\in S, \alpha >0\}$, we define the tangent cone at $x\in\RR^p$ as \[T_{\cA}(x)=\mathrm{cone}\{z-x:\|z\|_{\cA}\le \|x\|_{\cA}\} = \mathrm{cone}\{z-x:\|z\|_1\le \|x\|_1\}.\]

Let us compare the set $S_1:=\frac{B_2^p(\beta^{\ast},\varepsilon)\cap B_1^p(0,1)-\beta^{\ast}}{\varepsilon}$ to $S_2:=T_{\cA}(\beta^{\ast})\cap B_2^p(0,1)$, and show $S_1\subseteq S_2$. Suppose $x\in S_1$. Then $\varepsilon x + \beta^{\ast}\in B_2^p(\beta^{\ast},\varepsilon)\cap B_1^p(0,1)$. This implies that $\|\varepsilon x +\beta^{\ast}\|_1\le 1=\|\beta^{\ast}\|_1$ and $\|\varepsilon x\|_2\le \varepsilon$. The latter implies $\|x\|_2\le 1$, i.e., $x\in B_2^p(0,1)$. Next, observe that \[x = \frac{1}{\varepsilon}\left(\varepsilon x +\beta^{\ast} - \beta^{\ast}\right)\in \mathrm{cone}\{z-\beta^{\ast}:\|z\|_1\le \|\beta^{\ast}\|_1\}=T_{\cA}(\beta^{\ast}),\] where we took $z = \varepsilon x +\beta^{\ast}$. Thus, we conclude $x\in S_2$, so $S_1\subseteq S_2$.

For any set $T$, define its Gaussian width $w(T)$ by \[w(T) := \EE_{g\sim \cN(0,\mathbb{I}_p)}\Big[\sup_{t \in T}\langle t,g\rangle \Big].\] We will use numerous properties of the Gaussian width. Namely, $w$ is invariant under translations, is positive homogeneous, i.e., $w(\alpha T)=\alpha w(T)$ for any $\alpha>0$, and if $T_1\subseteq T_2$, then $w(T_1)\le w(T_2)$. Since the family of random variables $\{\langle t,g\rangle:t\in T\}$ is an example of a zero-mean Gaussian process, we apply the Sudakov minoration lower bound \citep[Theorem 5.30]{wainwright2019high} to upper bound the local metric entropy. 
\begin{align*}
    \log \mathcal{M}_{K}^{\adloc}(\beta^{\ast},\varepsilon,c) &=
    \log \cM(\varepsilon/c, B_2^p(\beta^{\ast},\varepsilon)\cap B_1^p(0,1)  )  \\
    &\le  \frac{4c^2}{\varepsilon^2} \cdot  \left[w(B_2^p(\beta^{\ast},\varepsilon)\cap B_1^p(0,1))\right]^2 \\
    &= 4c^2\cdot \left[w\left( \frac{B_2^p(\beta^{\ast},\varepsilon)\cap B_1^p(0,1) - \beta^{\ast}}{\varepsilon}  \right)\right]^2\\ 
    &\le 4c^2 \left[w(T_{\cA}(\beta^{\ast})\cap B_2^p(0,1))\right]^2.
\end{align*}  
We now relate the Gaussian width of  $T_{\cA}(\beta^{\ast})\cap B_2^p(0,1)$ to $T_{\cA}(\beta^{\ast})\cap \SS_2^p(0,1)$, and do so by using the statistical dimension of a convex cone $C$, as is defined in \citet[Proposition 3.1, (5)]{amelunxen2014living}: \begin{equation} \label{eq:statistical:dimension}
    \delta(C) = \EE_{g\sim \cN(0,\mathbb{I}_p)}\left[\left(\sup_{t \in C\cap B_2^p(0,1)}\langle t,g\rangle\right)^2 \right].
\end{equation} Applying \citet[Proposition 10.2]{amelunxen2014living}, 
    \begin{align*}
       \delta(T_{\cA}(\beta^{\ast}))&\le  [w(T_{\cA}(\beta^{\ast})\cap \SS_2^{p-1}(0,1))]^2 + 1.
    \end{align*} 
Additionally by \citet[Proposition 3.10]{chandrasekaran2012convex},  \begin{align*}
    [w(T_{\cA}(\beta^{\ast})\cap \SS_2^{p-1}(0,1))]^2 &\le 2s\log\left(\frac{p}{s}\right)+\frac{5}{4}s,
\end{align*} where we recall $\beta^{\ast}$ is $s$-sparse. Using Jensen's inequality and the fact that $T_{\cA}(\beta^{\ast})$ is a convex cone along with the previous two results,  we obtain 
    \begin{align*}
    \log \mathcal{M}_{K}^{\operatorname{adloc}}(\beta^{\ast},\varepsilon,c) &\le 4c^2\left[w(T_{\cA}(\beta^{\ast})\cap B_2^p(0,1))\right]^2 \\ &= 4c^2\left(\EE_{g\sim \cN(0,\mathbb{I}_p)}\left[\sup_{t \in T_{\cA}(\beta^{\ast})\cap B_2^p(0,1)}\langle t,g\rangle \right]\right)^2 \\
    &\le 4c^2\EE_{g\sim \cN(0,\mathbb{I}_p)}\left[\left(\sup_{t \in T_{\cA}(\beta^{\ast})\cap B_2^p(0,1)}\langle t,g\rangle\right)^2 \right] \\
    &= 4c^2\delta(T_{\cA}(\beta^{\ast})). \\ 
    &\le 4c^2[w(T_{\cA}(\beta^{\ast})\cap \SS_2^{p-1}(0,1))]^2 + 4c^2 \\
    &\le  4c^2s\left( 2\log\left(\frac{p}{s}\right)+\frac{5}{4}\right)+2c^2 \\ &\lesssim s\log(p/s).
    \end{align*} 
Hence,  $\log \cMFadloc(f_{\beta^{\ast}},\varepsilon,c)\lesssim s\log(p/s).$ The result holds if we replace $c$ with $4c$ as well.

Now recalling the definition of $\varepsilon_J$ and $J^{\ast}$ from  Theorem \ref{theorem:nonparam:upperbound:main:adaptive}, we will have for some constant $\kappa>0$ that \begin{align*}
    n(\varepsilon_{J^{\ast}}/2)^2 &\le 2\log \cMFadloc(f_{\beta^{\ast}},\kappa\cdot\varepsilon_{J^{\ast}},4c)\vee 
\log 2 \lesssim s\log(p/s).
\end{align*} Then using Theorem \ref{theorem:nonparam:upperbound:main:adaptive} and rearranging the previous inequality, we conclude 
\[\EE_{\vec{X},\vec{Y}}\|f_{\beta^{\ast}}- f^{\ddagger}(\vec{X},\vec{Y})\|_{\LtwoP}^2 \lesssim \varepsilon_{J^{\ast}}^2 \lesssim\frac{s\log(p/s)}{n},\] where $f^{\ddagger}(\vec{X},\vec{Y})$ is the output of our algorithm. This result matches that of \citet[Theorem 2(b)]{raskutti2011minimax}, although notably our estimator is different, our bound is in expectation rather than in high probability and we use data that are isotropic and sub-Gaussian.

\subsubsection{Linear Functionals over Ellipsoids}

The following example yields novel minimax rates for linear functionals over a class of ellipsoids. Define the matrix $K_c$ to be the $p\times p$ diagonal matrix with $i$th entry $a_i^{-1}$, where $0< a_1\le a_2\le\dots\le a_p$, where $a_p \in \RR$ is assumed to be a bounded constant independent of the sample size. Let $\Theta = \{\theta\in\RR^p:\theta^TK_c\theta\le 1\}$. Our assumption on $a_p$ implies that the set $\Theta$ has a bounded diameter. We draw $X_1,\dots,X_n$ isotropic and sub-Gaussian with variance proxy $\tau^2>1$, and for $1\le i\le n$ we observe $Y_i = X_i^T\theta^{\ast}+w_i$ where $w_i\sim \cN(0,\sigma^2)$ and $\theta^{\ast}\in\Theta$ is the true value of the parameter. Let $X\in\RR^{n\times p}$ be the matrix with $X_i$ in the $i$th-row. Our function class is therefore $\cF=\{f_{\theta}(x) = x^T\theta:\theta\in\Theta\}$. Note that $\diam(\cF)= \diam(\Theta)=2\sqrt{a_p}$. As we showed in Section \ref{subsection:linear_functionals}, since the $X_i$ are isotropic, $\|f_{\theta_1} - f_{\theta_2}\|_{\LtwoP}^2 = \|\theta_1-\theta_2\|_2^2$. 

Let us now compute the minimax rate for estimating $\theta^{\ast}$, or equivalently, $f_{\theta^{\ast}}$. We will follow \citet{neykov2022minimax} which has a very similar proof. Consider the Kolmogorov width $W_k(\Theta) = \min_{P\in\cP_k}\max_{\theta\in \Theta}\|P\theta-\theta\|,$ for $\cP_k$ the set of linear projections into $k$-dimensional subspaces. One can check that $W_k(\Theta) = \sqrt{a_{p-k}}$ where we take $a_0=0$. 

If $a_p>1/n$, then let $k\in[p]$ be such that  $a_{p-k}\le (k+1)/n$ and $a_{p-k+1}>k/n$. This will imply $W_k(\Theta)^2 \le (k+1)/n$. We claim that $\varepsilon^{\ast} \asymp\sqrt{k/n}$, where $\varepsilon^{\ast}$ is as defined in \eqref{definition:epsilon:star}. As a result, the minimax rate will be $k/n\wedge a_p$ if $a_p>1/n$, and $a_p$ if $a_p\le 1/n$, all up to constants.

We start with the upper bound. In the case $a_p \le 1/n$, note that the minimax rate is trivially bounded by the squared diameter $4a_p$. Suppose instead $a_p >1/n.$ Now take $\varepsilon^2\ge Ck/n$.  Pick any $f_{\theta}\in\cF$ and let $M_{f_{\theta}}$ be an $\varepsilon/c$-packing of $\cF\cap B(f_{\theta}, \varepsilon)$. Then this corresponds to some $\varepsilon/c$-packing $M_{\theta}$ of the set $\Theta\cap B_2^p(\theta,\varepsilon)$, where we are using the $p$-dimensional Euclidean $\ell_2$-norm in the ball centered at $\theta$. Let $P\in\cP_k$ solve the outer minimization problem in the Kolmogorov width definition for $\theta$.

Then for any $g_{\theta_1}, g_{\theta_2}\in M_{f_{\theta}}$, we have \begin{align*}
    \frac{\varepsilon}{c} &\le \|g_{\theta_1}- g_{\theta_2}\|_{\LtwoP} = \|\theta_1-\theta_2\|_2 \\ &\le \|\theta_1-P\theta_1 -\theta_2+P\theta_2\|_2+\|P\theta_1-P\theta_2\|_2 \\
    &\le 2W_k(\Theta)+\|P\theta_1-P\theta_2\|_2 .
\end{align*} Note that $\varepsilon^2 \ge Ck/n >\frac{C(k+1)}{2n} \ge \frac{C}{2}\cdot W_k(\Theta)^2,$ which means $2W_k(\Theta)\le \sqrt{\frac{8}{C}}\cdot \varepsilon$ so by taking $C$ sufficiently large, we have \begin{align*}
    \|P\theta_1-P\theta_2\|_2 &\ge \frac{\varepsilon}{c}  - 2W_k(\Theta) > \frac{\varepsilon}{c}- \sqrt{\frac{8}{C}}\cdot \varepsilon = \frac{\varepsilon}{C'}
\end{align*} for some constant $C'$. This means we have formed an $\frac{\varepsilon}{C'}$-packing set $\{P\theta:\theta\in M_{\theta}\}$ of the $k$-dimensional set $P\Theta$. On the other hand, a $k$-dimensional set contained in a $k$-dimensional sphere has a packing set with log cardinality bounded by $kC''$ for some constant $C''$. Since $\|P\theta_1-P\theta_2\|_2 >0$ for each $\theta_1,\theta_2\in M_{\theta}$, we have $|M_{\theta}| =|\{P\theta:\theta\in M_{\theta}\}|$. Hence the packing set $M_{\theta}$ has log cardinality bounded by $kC''$, as does $M_{f_\theta}$ by the equivalence of the Euclidean norm and $\LtwoP$ norm.

Hence we have $n\varepsilon^2 \ge kC$ while $\log \cMFloc(\varepsilon, c) \le kC''$, so taking $C$ large enough, we have $n\varepsilon^2 > \log \cMFloc(\varepsilon, c).$ But since $n\varepsilon^2$ is increasing in $\varepsilon$ while $\log \cMFloc(\varepsilon, c)$ is non-increasing in $\varepsilon$, we must have $\varepsilon^{\ast}\le\varepsilon$. Since this holds for all $\varepsilon\ge \sqrt{Ck/n}$, it follows that $\varepsilon^{\ast}\le \sqrt{Ck/n}$. The minimax rate will also be bounded by the squared diameter, so the minimax rate is therefore upper-bounded up to constants by $(k/n)\wedge 4a_p$ when $a_p>1/n$.

Now we prove the lower bound. We first handle the $a_p>1/n$ case. Consider the subset $\Theta'=\{\theta\in\RR^p:\sum_{i=p-k+1} \theta_i^2/a_i\le 1, \theta_1=\dots=\theta_{p-k}=0\}\subseteq\Theta$, i.e., we set the first $p-k$ coordinates of $\theta\in\Theta$ to $0$. But as $a_{p-k+1}>k/n$ and $a_1\le a_2\le\dots\le a_p$, we conclude that $\Theta'$ and therefore $\Theta$ contains a $k$-dimensional ball of radius $\sqrt{k/n}$. Hence $\log \cMFloc(\varepsilon, c) = \log \cM_{\Theta}^{\loc}(\varepsilon, c)\ge Ck$ for some constant $C$. Now take $\varepsilon = C'\sqrt{k/n}$ for some sufficiently small constant $C'$ such that $n\varepsilon^2 = (C')^2k \le Ck \le \log \cMFloc(\varepsilon, c)$. Since $\varepsilon^{\ast}$ is the supremum of all such $\varepsilon$ satisfying $n\varepsilon^2 \le \log \cMFloc(\varepsilon, c)$, we conclude $\varepsilon^{\ast}\ge \varepsilon \gtrsim\sqrt{k/n}$. Combined with the upper bound, we conclude in the case where $a_p>1/n$ that ${\varepsilon^{\ast}}^2\asymp k/n$ and thus the minimax rate is $(k/n)\wedge 4 a_p$ up to constants.

Lastly, we consider the lower bound for when $a_p\le n^{-1}$, which implies $a_i\le n^{-1}$ for all $i\in[p]$. Now take $\varepsilon = \frac{2\sqrt{a_p}}{C\sqrt{n}}$ for some constant $C$. For sufficiently large $C$, we have $\log \cMFloc(\varepsilon, c)>1 > \frac{4 a_p}{C^2}=n\cdot \varepsilon^2.$ Again, by the definition of $\varepsilon^{\ast}$ as a supremum, we have $\varepsilon^{\ast}\ge \varepsilon= \frac{2\sqrt{a_p}}{C\sqrt{n}}$, so $\varepsilon^{\ast}\gtrsim \sqrt{a_p}$, noting that $\sqrt{a_p}$ is the diameter up to constants. So the minimax rate is lower bounded by $a_p$ up to constants, and we already know it is upper bounded by it in this case. This completes our proof that the minimax rate is $(k/n)\wedge 4a_p$ if $a_p>1/n$, and $a_p$ if $a_p\le 1/n$, where $k\in[p]$ is such that  $a_{p-k}\le (k+1)/n$ and $a_{p-k+1}>k/n$.

Now, let us compare our result to that of \citet{pathak2023noisy}. Define \[\gamma_n = \sup_{\Omega\succ 0}\left\{\EE\;\mathrm{Tr}\left(\Omega^{-1}+ n^{-1}X^TX\right)^{-1}: \mathrm{Tr}(K_c^{-1/2}\Omega K_c^{1/2})\le n \right\},\] where we recall $K_c$ is the diagonal matrix with $(i,i)$-entry $a_i$ for each $1\le i \le p$. The authors then conclude the minimax rate is of order $\gamma_n/n$. 

Thus, if $a_p\le n^{-1}$, we equate the two results and conclude that $a_p\asymp \gamma_n/n$, or if $a_p>n^{-1}$ we conclude $(k/n)\wedge a_p \asymp \gamma_n/n$. Although calculating $\gamma_n$ from its definition may be challenging, our approach with local metric entropies is able to indirectly recover it.

\subsection{Lipschitz Classes}

The following example of Lipschitz classes recovers the known rate  given in \citet{stone_1982}. Given any function $f\colon[0,1]\to\RR$, we extend $f$ to $(1,\infty)$ by setting $f(x)=f(1)$ for $x>1$. For any such function $f$ and real number $h>0$, define the function $\Phi_{f,h}\colon[0,1]\to\RR$ by $\Phi_{f,h}(x)=f(x+h)-f(x)$. Now for each $\alpha\in(0,1]$ and $\gamma>0$, define \[\cF_{\alpha, \gamma}= \{f\colon [0,1]\to \RR : \|f\|_{\LtwoP}\le \gamma, \|\Phi_{f,h}\|_{\LtwoP} \le \gamma h^{\alpha} \text{ for all }h>0\}.\] Then this is a convex function class with bounded diameter, i.e., $\gamma \le \diam(\cF_{\alpha, \gamma})\le 2\gamma$. 

Then we have $\log \cM(\varepsilon, \cF_{\alpha,\gamma})\asymp \varepsilon^{-1/\alpha}$ by \citet[Section 6.4]{yang1999information} and using the inequality \begin{align*}
    \log \cM(\varepsilon/c, \cF_{\alpha, \gamma}) \ge   \log \cM_{\cF_{\alpha, \gamma}}^{\loc}(\varepsilon, c) \ge \log \cM(\varepsilon/c, \cF_{\alpha, \gamma}) -\log \cM(\varepsilon,\cF_{\alpha,  \gamma}),
\end{align*} we conclude $ \log \mathcal{M}_{\cF_{\alpha, \beta, \gamma}}^{\loc}(\varepsilon, c)\asymp \varepsilon^{-1/\alpha}$. The relationship $n\varepsilon^2\asymp \varepsilon^{-1/\alpha}$ occurs when $\varepsilon \asymp n^{-\frac{\alpha}{2\alpha + 1}}$, so we conclude the minimax-rate is determined by $n^{-\frac{\alpha}{2\alpha + 1}} \wedge \diam(\cF_{\alpha,  \gamma}) \asymp n^{-\frac{\alpha}{2\alpha + 1}} \wedge \gamma.$ 

\subsection{Monotone functions in general dimensions}

We now use results from \citet{gao_Wellner_monotone} to derive the known minimax rate for monotone functions in a multivariate setting. We let $\cF_p=\{f\colon [0,1]^p \to[0,1], f \text{ non-decreasing in each variable}\}$, where we say $f\colon [0,1]^p \to[0,1]$ is non-decreasing if \[f(x_1,\dots,x_{i-1},x_i,x_{i+1},\dots, x_p) \le f(x_1,\dots,x_{i-1}, x_i+y_i,x_{i+1},\dots, x_p)\] for any $(x_1,\dots,x_p)\in [0,1]^p$, $y_i> 0$, and $1\le i \le p$. Using the least squares estimator, \citet{isotonic_general_dimensions} achieve an error rate of $n^{-1/p}\log^4 n$ for $p\ge 3$, which is the minimax rate up to logarithm factors. In fact, under some mild conditions \citet{deng_and_zhang_2020} achieve $n^{-1/p}$ without the logarithmic factors using a block estimator. We now explain how our algorithm achieves the same rate. We also demonstrate adaptivity of our algorithm for functions that are piecewise constant on $k$ hyper-rectangles as is done in \citet{isotonic_general_dimensions} and \citet{deng_and_zhang_2020}.

We restate a key theorem from \citet{gao_Wellner_monotone}:

\begin{lemma}[Theorem 1.1 of \cite{gao_Wellner_monotone}]  For $p>2$, we have \[\log \cM(\varepsilon, \cF_p) \asymp \varepsilon^{-2(p-1)}.\] If $p=2$, then  \[\varepsilon^{-2}\lesssim\log \cM(\varepsilon, \cF_p) \lesssim \varepsilon^{-2}(\log 1/\varepsilon)^2.\]
\end{lemma}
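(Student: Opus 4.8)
This is Theorem~1.1 of \citet{gao_Wellner_monotone}, which we would invoke as a black box; here is the route one would take to establish it. The task is to prove matching upper and lower bounds on the packing entropy of $\mathcal{F}_p$ in $L_2$ (Lebesgue measure on $[0,1]^p$); since $\mathcal{F}_p$ is uniformly bounded in $[0,1]$, this is equivalent to controlling bracketing and covering numbers, and the two bounds will coincide for $p\ge 3$ while leaving a $(\log 1/\varepsilon)^2$ slack when $p=2$.

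\emph{Upper bound.} Partition $[0,1]^p$ into axis-parallel cubes of side $\delta$ and replace $f\in\mathcal{F}_p$ by its value at the lower corner of each cube; the approximant $f_\delta$ is again monotone and $|f-f_\delta|\le\operatorname{osc}_Q(f)$ on each cube $Q$. Telescoping the coordinatewise increments of $f$ along grid fibers gives $\sum_Q\operatorname{osc}_Q(f)\lesssim_p\delta^{-(p-1)}$, so, using $\operatorname{osc}_Q(f)\le 1$, \[\|f-f_\delta\|_{L_2}^2\le\delta^p\sum_Q\operatorname{osc}_Q(f)^2\le\delta^p\sum_Q\operatorname{osc}_Q(f)\lesssim_p\delta.\] Rounding the values of $f_\delta$ down to a grid of width $\varepsilon$ preserves monotonicity and costs a further $L_2$ error $\le\varepsilon$, so for $\delta\asymp\varepsilon^2$ every $f$ is $\lesssim\varepsilon$-close in $L_2$ to a monotone lattice map $[\varepsilon^{-2}]^p\to\{0,\varepsilon,2\varepsilon,\dots,1\}$. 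The number of such maps is a box plane-partition count, which on a \emph{uniform} mesh is only $\exp\big(O(\varepsilon^{-(2p-1)})\big)$ — a full power of $\varepsilon$ too large — so one instead uses a Birman--Solomjak multiresolution decomposition: refine the mesh only where $f$ is steep, subject to the budgets $\sum(\text{local rise})\le 1$ and $\sum(\text{local volume})\le 1$, and sum the local entropies. This gives $\log M(\varepsilon,\mathcal{F}_p)\lesssim\varepsilon^{-2(p-1)}$ for $p\ge 3$ and $\lesssim\varepsilon^{-2}(\log 1/\varepsilon)^2$ for $p=2$.

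\emph{Lower bound.} First reduce the dimension by one: for $\phi\in\mathcal{F}_{p-1}$ the function $x\mapsto\mathbbm{1}\big[x_p\ge\phi(1-x_1,\dots,1-x_{p-1})\big]$ lies in $\mathcal{F}_p$, and two such indicators are at squared $L_2$-distance exactly $\|\phi_1-\phi_2\|_{L_1([0,1]^{p-1})}$; hence the $L_2$-packing number of $\mathcal{F}_p$ at scale $\varepsilon$ is at least the $L_1$-packing number of $\mathcal{F}_{p-1}$ at scale $\varepsilon^2$. Next pack $\mathcal{F}_{p-1}$ in $L_1$ by a ``steep baseline'' construction: on the grid $[m]^{p-1}$ with mesh $1/m$, let $g_0$ be the monotone step function that increases by $h$ at each coordinate step, with $h\asymp 1/m$ chosen so that $\|g_0\|_\infty\le 1-h$; then for \emph{any} $\xi\colon[m]^{p-1}\to\{0,h\}$ the function $g_0+\xi$ is monotone, because a forward step changes $g_0$ by $h\ge\xi(Q)-\xi(Q')$. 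Since $\|(g_0+\xi)-(g_0+\xi')\|_{L_1}\asymp h\,m^{-(p-1)}d_H(\xi,\xi')$, a Varshamov--Gilbert code on the $2^{m^{p-1}}$ choices of $\xi$ yields $\exp(\Omega(m^{p-1}))$ functions pairwise $L_1$-separated by $\gtrsim h\asymp 1/m$; taking $m\asymp 1/\eta$ gives $L_1$-entropy $\gtrsim\eta^{-(p-1)}$. Combining the two steps, $\log M(\varepsilon,\mathcal{F}_p)\gtrsim(\varepsilon^2)^{-(p-1)}=\varepsilon^{-2(p-1)}$, which matches the upper bound for $p\ge 3$ and gives the claimed $\varepsilon^{-2}$ when $p=2$.

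\emph{Main obstacle.} The genuinely delicate step is the upper bound for $p\ge 3$: the uniform grid overshoots $\varepsilon^{-2(p-1)}$ by the factor $\varepsilon^{-1}$, so one really needs the adaptive multiresolution argument, together with sharp control of the underlying combinatorics — the enumeration of monotone lattice surfaces (order ideals of $[m]^p$), whose log-cardinality is $\Theta(m^{p-1})$, a fact that is itself nontrivial in dimension $\ge 3$. The lower bound sketched above is elementary and already sharp, and the remaining $(\log 1/\varepsilon)^2$ gap at $p=2$ is precisely the place where even the refined counting does not close.
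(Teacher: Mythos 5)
The paper offers no proof of this lemma --- it is imported verbatim as Theorem 1.1 of \citet{gao_Wellner_monotone}, which is exactly the black-box treatment you propose --- so your approach matches the paper's. Your accompanying sketch is a faithful outline of how that result is actually established (the subgraph-indicator reduction from $L_2$-packing of $\mathcal{F}_p$ to $L_1$-packing of $\mathcal{F}_{p-1}$ at scale $\varepsilon^2$, combined with the baseline-plus-perturbation Varshamov--Gilbert construction, is correct and already sharp; the upper bound for $p\ge 3$ is indeed the delicate part, where uniform gridding overshoots by a factor of $\varepsilon^{-1}$), but none of this is required for the paper's purposes.
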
 

Assume $p>2$ at first. Recall once more by \citet[Section 6.4]{yang1999information} that \[\log \cM(\varepsilon/c,\cF_p)\ge \log \cM_{\cF_p}^{\loc}(\varepsilon,c)\ge \log \cM(\varepsilon/c,\cF_p) - \log \cM(\varepsilon,\cF_p).\] Therefore $\log \cM_{\cF_p}^{\loc}(\varepsilon,c)\asymp \varepsilon^{-2(p-1)}$. Next, the relationship $n\varepsilon^2 \asymp \varepsilon^{-2(p-1)}$ holds when $\varepsilon \asymp n^{-1/(2p)}$. Since the diameter of $\cF_p$ is 1, we conclude the minimax rate for $p>2$ is given by $n^{-1/p}\wedge 1$ up to constants. As discussed earlier, this matches the result of \citet{isotonic_general_dimensions} and \citet{deng_and_zhang_2020}.

Let us next consider the $p=2$ case. Since for any $\varepsilon>0$ satisfying $n\varepsilon^2 \le \log \mathcal{M}_{\cF_2}^{\loc}(\varepsilon,c)$ we have $n\varepsilon^2\lesssim \varepsilon^{-2}(\log 1/\varepsilon)^2$, it follows that $\varepsilon^{\ast}$ as defined in Theorem \ref{theorem:bounded:minimax:achieved} satisfies $n{\varepsilon^{\ast}}^4\lesssim(\log 1/\varepsilon^{\ast})^2.$ On the other hand, since $\log \mathcal{M}_{\cF_2}^{\loc}(\varepsilon,c)\gtrsim\varepsilon^{-2}$, we have $n{\varepsilon^{\ast}}^2\gtrsim {\varepsilon^{\ast}}^{-2}$, i.e., $\varepsilon^{\ast}\gtrsim n^{-1/4}$. Hence \[n^{-1/4} \lesssim \varepsilon^{\ast} \lesssim n^{-1/4}\sqrt{\log(1/\varepsilon^{\ast})}.\] Reusing $n^{-1/4} \lesssim \varepsilon^{\ast}$ note that $\sqrt{\log(1/\varepsilon^{\ast})} \lesssim\sqrt{\log(n^{1/4})} \asymp \sqrt{\log n}.$ Hence $n^{-1/4} \lesssim \varepsilon^{\ast} \lesssim n^{-1/4}\sqrt{\log(n)}.$ The minimax rate therefore satisfies 
\[n^{-1/2} \wedge 1 \lesssim {\varepsilon^{\ast}}^2 \wedge 1 \lesssim (n^{-1/2}\log n)\wedge 1.\]

For $p=1$, we know $\log \cM(\varepsilon,\cF_1)\asymp \varepsilon^{-1}$ by \citet[Theorem 2.7.5]{van1996weak}. Reusing the result from \citet{yang1999information} we conclude $\log \cM_{\cF_1}^{\loc}(\varepsilon,c)\asymp \varepsilon^{-1}$, and this rate matches $n\varepsilon^2$ when $\varepsilon\asymp n^{-1/3}$. The minimax rate is therefore $n^{-2/3}\wedge 1$.

\subsubsection{Adaptive Rates}

Adaptive rates for monotone functions that are piecewise constant on hyper-rectangles have been studied in \citet{isotonic_general_dimensions} with the least squares estimator (LSE) and also in \citet{deng_and_zhang_2020} with a block estimator. We now demonstrate that our algorithm recovers the adaptive rates given by the LSE in \citet[Theorem 5]{isotonic_general_dimensions}. Let $p\ge 2$ and suppose $f^{\ast}\in\mathcal{F}_p$ is piecewise constant on $k$ hyper-rectangles as defined in Section 3 of \cite{isotonic_general_dimensions}. The convexity of $\mathcal{F}_p$ means we can apply our adaptivity result in Theorem \ref{theorem:nonparam:upperbound:main:adaptive}. Choose $\varepsilon$ such that $n\varepsilon^2 \gtrsim \log \mathcal{M}_{\mathcal{F}_p}^{\adloc}(f^{\ast},\kappa\varepsilon,4c)$  for some suitable constant $\kappa$ chosen from Theorem \ref{theorem:nonparam:upperbound:main:adaptive}. In particular, $\varepsilon_{J^{\ast}}$ from Theorem \ref{theorem:nonparam:upperbound:main:adaptive} satisfies this condition.  Assume $f_1,\dots,f_M\in\mathcal{F}_p$ form a packing set with $ M = \mathcal{M}_{\mathcal{F}_p}^{\adloc}(f^{\ast},\kappa\varepsilon,4c)$. This means $\|f_i-f_j\|_{L_2(\PP_X)}>\kappa\varepsilon/(4c)$ for all $i,j\in[M]$ with $i\ne j$ and $\|f_i-f^{\ast}\|_{L_2(\PP_X)} \le \kappa\varepsilon$ for all $i\in[M]$.

Now, in the proof of Lemma \ref{lemma:bernstein:nonparam:v2}, we obtain two separate bounds that relate the two norms $L_2(\PP_X)$ and $L_2(\PP_n)$. Applying one of the bounds, we set $\delta = \sqrt{n}\kappa\varepsilon$ and note for each $i\in[M]$ we have $n\|f_i-f^{\ast}\|_{L_2(\PP_n)}^2 \le 2n\kappa^2\varepsilon^2$ with probability $1-\exp\left(-\Omega(n\varepsilon^2)\right)$. Here $\Omega(\cdot)$ indicates absolute, positive multiplicative constants. By a union bound, this holds for all $i\in[M]$ simultaneously with probability $1-M\exp\left(-\Omega(n\varepsilon^2)\right)$. Similarly, taking $C$ to be some large constant and $\delta = \sqrt{n}\kappa\varepsilon/(4Cc)$, we have for any $i,j\in[M]$ with $i\ne j$ that $n\|f_i-f_j\|_{L_2(\PP_n)}^2 \ge n(C^2-1)\kappa^2\varepsilon^2/(16C^2c^2)$ with probability $1-\exp\left(-\Omega(n\varepsilon^2)\right)$. Again by a union bound, this holds for all $i,j\in[M]$ with $i\ne j$ simultaneously with probability $1-M^2\exp\left(-\Omega(n\varepsilon^2)\right)$. 

Now let $S$ be the event that $(X_1,\dots,X_n)$ simultaneously satisfies the conditions on $n\|f_i-f_j\|_{L_2(\PP_n)}^2$  for all $i,j\in[M]$ with $i\ne j$ and $n\|f_i-f^{\ast}\|_{L_2(\PP_n)}^2$ for all $i\in[M]$. We have shown that $\PP(S)\ge 1-2M^2\exp(-\Omega(n\varepsilon^2))$, and by our assumption on $\varepsilon$, $\PP(S)\ge \eta>0$ for some constant $\eta>0$. Let \[Q(\vec{X}):= \{(f(X_1),\dots,f(X_n)):f\in\mathcal{F}_p\}\subset\mathbb{R}^n\]
be the set of evaluations of functions in $\mathcal{F}_p$ on our random design points. Additionally, recall that we can write $n\|f-g\|_{L_2(\PP_n)}^2= \|f(\vec{X})-g(\vec{X})\|_2^2$ using our shorthand notation where $f(\vec{X})$ is the vector in $\RR^n$ with $i$th coordinate $f(X_i)$. Clearly each $f_i(\vec{X})\in Q(\vec{X})$. Abbreviate $\tilde c = \sqrt{\frac{32C^2c^2}{C^2-1}}$. Then with probability $\PP(S)$, \[\log \mathcal{M}_{\mathcal{F}_p}^{\adloc}(f^{\ast},\kappa\varepsilon,4c) \le \log \mathcal{M}_{Q(\vec{X})}^{\adloc}\left(f^{\ast}(\vec{X}),\sqrt{2n}\kappa\varepsilon, \tilde c\right),\] noting that the right-hand side is the adaptive local metric entropy of a random subset of $\RR^n$.

We now mimic the argument from the sparse vectors in the $\ell_1$-ball example using Sudakov minoration to relate the metric entropy to the Gaussian width $w$ of a (random) tangent cone, defined by $T(\mu;Q(\vec{X})) = \{t(\nu-\mu):\nu\in Q(\vec{X}), t\ge 0\}$. Note that  \[\frac{B_2^n(f^{\ast}(\vec{X}), \sqrt{2n}\kappa\varepsilon)\cap Q(\vec{X}) - f^{\ast}(\vec{X})}{\sqrt{2n}\kappa\epsilon}\subseteq  T(f^{\ast}(\vec{X});Q(\vec{X}))\cap B_2^n(0,1).\] Observe that \begin{align*}
    \log \mathcal{M}_{Q(\vec{X})}^{\adloc}\left(f^{\ast}(\vec{X}),\sqrt{2n}\kappa\varepsilon,\tilde c\right) &= \log \mathcal{M}\left(\sqrt{2n}\kappa\varepsilon/\tilde c,B_2^n(f^{\ast}(\vec{X}), \sqrt{2n}\kappa\varepsilon)\cap Q(\vec{X})\right) \\
    &\lesssim \frac{1}{n\kappa^2\varepsilon^2}\cdot  \left[w(B_2^n(f^{\ast}(\vec{X}), \sqrt{2n}\kappa\varepsilon)\cap Q(\vec{X}))\right]^2 \\
    &\asymp  \left[w\left( \frac{B_2^n(f^{\ast}(\vec{X}), \sqrt{2n}\kappa\varepsilon)\cap Q(\vec{X}) - f^{\ast}(\vec{X})}{\sqrt{2n}\kappa\varepsilon}  \right)\right]^2\\ 
    &\le  \left[w(T(f^{\ast}(\vec{X});Q(\vec{X}))\cap B_2^n(0,1))\right]^2.
\end{align*} We then relate the Gaussian width of the tangent cone to its statistical dimension $\delta$  (defined in \eqref{eq:statistical:dimension}) using Jensen's inequality:
\begin{align*}
\MoveEqLeft
    \left[w(T(f^{\ast}(\vec{X});Q(\vec{X}))\cap B_2^n(0,1))\right]^2 \\ &=  \left(\EE_{g\sim \cN(0,\mathbb{I}_n)}\left[\sup_{t \in T(f^{\ast}(\vec{X});Q(\vec{X}))\cap B_2^n(0,1)}\langle t,g\rangle \right]\right)^2 \\
    &\le  \EE_{g\sim \cN(0,\mathbb{I}_n)}\left[\left(\sup_{t \in T(f^{\ast}(\vec{X});Q(\vec{X}))\cap B_2^n(0,1)}\langle t,g\rangle\right)^2 \right] \\
    &=  \delta(T(f^{\ast}(\vec{X});Q(\vec{X}))).
\end{align*} 

We can thus bound the entropy with a conditional expectation and then apply the law of total expectation. After that we use \citet[Equation (21)]{isotonic_general_dimensions}, which bounds the statistical dimension term up to multiplicative constants depending on $p$ by $n \cdot(k/n)^{2/p}\cdot \log_{+}^{2\gamma_p}(n/k)$, where the authors define $\gamma_p=(p^2+p+1)/2$ if $p\ge 3$,  $\gamma_2=9/2$, and $\log_{+}(a):=\log(a\vee e)$. Thus, we have \begin{align*}
    \log \mathcal{M}_{\mathcal{F}_p}^{\adloc}(f^{\ast},\kappa\varepsilon,4c) &\lesssim \EE_{X}[\delta(T(f^{\ast}(\vec{X});Q(\vec{X})) | X_1,\dots,X_n\in S] \\
    & \leq \EE_{X}[\delta(T(f^{\ast}(\vec{X});Q(\vec{X}))]/\PP(S)\\
    &\lesssim n\cdot (k/n)^{2/p}\cdot\log_{+}^{2\gamma_p}(n/k),
\end{align*}
where we used that $\PP(S)\ge \eta>0$ for some constant $\eta>0$ as we argued earlier from our assumption on $\varepsilon$. 
Therefore, applying Theorem \ref{theorem:nonparam:upperbound:main:adaptive}, using that $\varepsilon=\varepsilon_{J^{\ast}}$ satisfies $n\varepsilon^2 \gtrsim \log \mathcal{M}_{\mathcal{F}_p}^{\adloc}(f^{\ast},\kappa\varepsilon,4c)$, and that $J^{\ast}$ is chosen maximally, our estimator $f^{\ddagger}$ satisfies \begin{align*}
    \EE_{\vec{X},\vec{Y}}\|f^{\ast} - f^{\ddagger}(\vec{X},\vec{Y})\|_{\LtwoP}^2 &\lesssim \varepsilon_{J^{\ast}}^2 \asymp n^{-1}\left[\log \mathcal{M}_{\mathcal{F}_p}^{\adloc}(f^{\ast},\kappa\varepsilon_{J^{\ast}},4c) \vee \log 2\right] \\ &\lesssim (k/n)^{2/p}\cdot \log_{+}^{2\gamma_p}(n/k).
\end{align*} 
\section{Acknowledgements}
\label{section:nonparam:acknowledgements}

The authors would like to thank Arun Kumar Kuchibhotla, Sivaraman Balakrishnan, Larry Wasserman and Alexandre Tsybakov for discussions which led to significant improvements in the paper---specifically they led to the realization that our technique works for function classes more general than ones bounded in sup norm or with an $L$-sub Gaussian assumption. The authors are also grateful to the several referees and Associate Editors for comments that strengthened the paper.

\bibliographystyle{abbrvnat}
\bibliography{nonparam_ref}

\clearpage

\appendix

\section{Proofs for Section \ref{section:nonparam:lower:bound}}

\begin{proof}[\hypertarget{proof:lemma:nonparam:mutualinfo}{Proof of Lemma \ref{lemma:nonparam:mutualinfo}}] 
    We use \cite[(15.52)]{wainwright2019high}, the decoupling property of Kullback-Leibler (KL) divergence $D_{\mathrm{KL}}$, and the decomposition of the joint distribution of $(X_i,Y_i)$. 
    Let $\PP_{Y_i|X_i}^{f_j}$ be the distribution of $(Y_i |X_i, J = j)$, which is a normal distribution with mean $f_j(X_i)$ and variance $\sigma^2$. Denote by $\PP_{(X_i,Y_i)}^{f_j}$ the distribution of $(Y_i,X_i|J=j)$ and $\PP_{X_i}$ that of $X_i$, so that $\PP_{(X_i,Y_i)}^{f_j} = \PP_{X_i}\times \PP_{Y_i|X_i}^{f_j}$. Note that the $X_i$ does not depend on $f_j$. Similarly, for any function $h\in\cF$, if we set $Y_i=h(X_i)+\xi_i$ instead, we denote the analogous distributions by $\PP_{(X_i,Y_i)}^{h} = \PP_{X_i}\times \PP_{Y_i|X_i}^{h}$.
    Then we obtain 
    \begin{align*}
        I(\vec{X},\vec{Y};J) 
        &\le \frac{1}{m} \sum_j D_{\mathrm{KL}}\left(\prod_i\PP_{(X_i,Y_i)}^{f_j} \parallel \prod_i       \PP_{(X_i,Y_i)}^{h}\right) \\
        &= \frac{1}{m} \sum_j \sum_i D_{\mathrm{KL}}\left(\PP_{(X_i,Y_i)}^{f_j} 
            \parallel \PP_{(X_i,Y_i)}^{h}\right) \\
        &=  \frac{1}{m} \sum_j \sum_i \EE_{f_j}\left[\log\left(
            \PP_{(X_i,Y_i)}^{f_j}/ \PP_{(X_i,Y_i)}^{h}\right)\right] \\
        &= \frac{1}{m} \sum_j \sum_i\EE_{f_j}
            \left[\log\left(\frac{\PP_{X_i}\PP_{(Y_i|X_i)}^{f_j}}{\PP_{X_i}\PP_{(Y_i|X_i)}^{h}}\right)\right] \\
        &= \frac{1}{m} \sum_j \sum_i\EE_{X_i} \EE_{Y_i | (X_i, f_j)}
            \left[\log\left(\PP_{(Y_i|X_i)}^{f_j}/
            \PP_{(Y_i|X_i)}^{h}\right)\right] \\
        &= \frac{1}{m} \sum_j \sum_i\EE_{X_i} 
            \left[D_{\mathrm{KL}}\left(\cN(f_j(X_i), \sigma^2) 
            \parallel \cN(h(X_i), \sigma^2)\right)\right] \\
        &= \frac{1}{m} \sum_j \sum_i \EE_{X_i}
            \left[\frac{|f_j(X_i)-h(X_i)|^2}{2\sigma^2}\right] \\
        &= \frac{1}{2\sigma^2 m}\sum_j \sum_i \|f_j-h\|_{\LtwoP}^2 \\
        &= \frac{n}{2\sigma^2 m}\sum_j \|f_j-h\|_{\LtwoP}^2 \\
        &\le \frac{n}{2\sigma^2}\max_j \|f_j-h\|_{\LtwoP}^2.
\end{align*}
\end{proof}

\begin{proof}[\hypertarget{proof:lemma:nonparam:lowerbound}{Proof of Lemma \ref{lemma:nonparam:lowerbound}}] 
    Pick a $f\in\cF$ that achieves \[M:=\cMFloc(\varepsilon, c):=\sup_{f \in \cF} \cM(\varepsilon/c, B(f, \varepsilon) \cap \cF)\] (otherwise, repeat the following argument by picking a limiting sequence of functions). Let $f_1,\dots,f_M$ be a maximal $\varepsilon/c$-packing set for $B(f, \varepsilon) \cap \cF$, so that $\|f_i-f_j\|_{\LtwoP}>\varepsilon/c$ for all $i\ne j$. Because $f_1,\dots,f_M\in B(f, \varepsilon) \cap \cF$, it follows for each $j\in[M]$ that $\|f_j-f\|_{\LtwoP} \le \varepsilon$. Letting $J$ be uniformly distributed on $[M]$, we have from Lemma \ref{lemma:nonparam:mutualinfo} that \begin{align*}
        I(\vec{X},\vec{Y};J) &\le \frac{n}{2\sigma^2}\max_j  \|f_j-f\|_{\LtwoP}^2 \le \frac{n\varepsilon^2}{2\sigma^2}.
    \end{align*} Applying Lemma \ref{lemma:fano:nonparam}, we obtain \begin{align*}
        \inf_{\hat{f}}\sup_{\bar{f}\in\cF}\EE_{\vec{X},\vec{Y}}\|\hat{f}(\vec{X},\vec{Y})-\bar{f}\|_{\LtwoP}^2 &\geq \frac{\varepsilon^2}{4c^2}\bigg(1 - \frac{\frac{n\varepsilon^2}{2\sigma^2} + \log 2}{\log M}\bigg).
    \end{align*}
    Choose $\varepsilon$ such that $\log M$ is bigger than the middle term in \[\log M\ge 4\left(\frac{n\varepsilon^2}{2\sigma^2} \vee \log 2\right) \ge 2\left(\frac{n\varepsilon^2}{2\sigma^2}+\log 2\right),\] and this proves the minimax risk is lower-bounded by $\frac{\varepsilon^2}{8c^2}$.
\end{proof}

\section{Proofs for Section \ref{section:nonparam:upper:bound}}

\begin{remark}[Breaking Ties in Algorithm] \label{remark:breaking_ties}
        By Theorem A.1 in \citet{gyorfi_2022}, the set of continuous functions is dense in $\cF$. Hence we can construct all of our packing sets using continuous functions only. That is, if $\cF_c$ denote the set of all continuous functions in $\cF$, then we construct maximal packing sets $M_k$ of sets $B(\cdot, d/2^{k-1}) \cap \cF_c$ prior to pruning. This $M_k$ clearly has cardinality that is smaller than or equal to the $M_k$ constructed with $\cF$ instead since $\cF_c \subseteq \cF$. In addition, since the set of all continuous functions is dense in $\cF$, for any $g\in \cF$ and $\varepsilon>0$, there exists an $h \in \cF_c$ such that $\|g-h\|_{\LtwoP}\leq \varepsilon$. Pick the point $f \in M_k$ which is $d/2^{k-1}$ close to $h$ and conclude that $\|f- g\|_{\LtwoP} \leq \varepsilon + d/2^{k-1}$. Let $\varepsilon$ go to zero, and we conclude that any point in $\cF$ can be approximated by a point in $\cF_c$ within distance $d/2^{k-1}$. Now suppose that the domain of the functions is $\RR^p$. Take the set $\QQ^p$ which is dense in $\RR^p$. This is a well-ordered set since there is a bijection between $\QQ^p$ and $\NN$. Hence the functions in $M_k$ can be restricted to $\QQ^p$ and then ordered lexicographically. There will not be any ties, since the functions are separated and continuous (hence the functions cannot be the same on $\QQ^p$ otherwise they are the same on $\RR^p$ which is a contradiction) so we can take the smallest function in the lexicographic ordering.
\end{remark}

    \begin{proof}[Proof of Proposition \ref{proposition:L:subGaussian:implies:moment:condition}]
        A uniform sup-norm bound trivially implies \eqref{new:bernstein:sufficient:condition}, so we just consider the $L$-sub-Gaussian case. Also note that by Stirling's approximation \citep{robbins1955}, we have \begin{align*}
            p^p &\le \frac{p!\exp(p)\exp\left(-\tfrac{1}{12p+1}\right)}{\sqrt{2\pi p}} = \frac{ p! \cdot\exp\left(p - \frac{1}{12p + 1}-\frac{1}{2}\log p\right)}{\sqrt{2\pi}} \\
            &=  \frac{p!}{\sqrt{2\pi}}\cdot\bigg[\exp\underbrace{\left(1 - \tfrac{1}{p(12p + 1)}-\tfrac{1}{2p}\log p\right)}_{\le 1}\bigg]^p \le \tilde C e^p p!.
        \end{align*}
        Therefore \begin{align*}
            \|f - g\|_{L_{2p}(\PP_X)}^{2p} &\le \left(L\sqrt{2p} \|f - g\|_{\LtwoP}\right)^{2p} \\ &= (2L^2)^p p^p \left(\|f - g\|_{\LtwoP}^2\right)^p \\
            &= (2L^2)^p p^p \left(\|f - g\|_{\LtwoP}^2\right)^{p-2}\|f - g\|_{\LtwoP}^2\cdot  \\
            &\le (2L^2)^p d^{p-2} p^p\|f - g\|_{\LtwoP}^2 \\
            &\le (2L^2 )^p d^{p-2} \tilde C e^p p! \|f - g\|_{\LtwoP}^2 \\
            &= \frac{\tilde{L}^{p-1}p!\|f - g\|_{\LtwoP}^2}{2},
        \end{align*} where  $\tilde L = (2\tilde C)^{\tfrac{1}{p-1}}\cdot (2L^2 e)^{\tfrac{p}{p-1}} d^{\tfrac{p-2}{p-1}} $. Note that $\tfrac{1}{p-1}\le 1$, $\tfrac{p}{p-1}\le 2$, and $\tfrac{p-2}{p-1}\le 1$. Hence \[\tilde L \le (2\tilde C)(2L^2e)^2 d=8e^2\tilde CL^4d.\] Thus, setting $L' = 8e^2\tilde CL^4d$, we have $\|f - g\|_{L_{2p}(\PP_X)}^{2p} \le \|f - g\|_{\LtwoP}^2 {L'}^{p-1}p!/2$ for some fixed constant $L'>0$ and all $p\ge 2$.
    \end{proof}

\begin{proof}[Proof of Example \ref{new:bernstein:sufficient:condition}] 
         Observe that any difference $h=f-g$ of two functions $f,g$ in $\cF$ satisfies $h(0) = 0$ and is $2$-Lipschitz. We need to argue that for some $L$, for all $p\ge 2$ we have
\begin{align}\label{lipschitz:identity:function:class}
    \EE h^{2p}(X) \leq \EE h^{2}(X) L^{p-1} p!/2.
\end{align}
By Rademacher's theorem, $h$ is differentiable almost everywhere, and since $h$ is $2$-Lipschitz we have $|h'(x)| \leq 2$ at any point of differentiability $x$. In addition, $|h(x)| \leq 2 |x|$. Hence by Stein's Lemma applied to the function $\phi(x)=h^{2p}(x)/x$, we have
\begin{align*}
     \EE h^{2p}(X) & = \EE \left[\frac{h^{2p}(X)\cdot X}{X}\right]= 2p\cdot  \EE \left[\frac{h^{2p-1}(X) h'(X)}{X}\right] - \EE \left[\frac{h^{2p}(X)}{X^2}\right]  \\
     & \leq 8p \EE\left[ \frac{|h^{2p-1}(X)|}{2 |X|}\right]\leq  8p \EE [h^{2p-2}(X)].
\end{align*}
Repeating this logic recursively $p-2$ times, 
\begin{align*}
     \EE h^{2p}(X)\leq 8^{p-1} p! \EE h^2(X). 
\end{align*} 
Thus \eqref{lipschitz:identity:function:class} holds with $L=2^{(3p-2)/(p-1)} \le 2^4 = 16$ since $\frac{3p-2}{p-1}\le 4$ for any $p\ge 2$. Finally we argue that the class $\cF$ has a bounded $\LtwoP$ diameter. This is so since $
    \EE h^{2}(X) \leq 4 \EE X^2 = 4.$

However, this function class is clearly not bounded in sup-norm, and we can also show it is not $L$-sub-Gaussian. To see the latter, we follow the logic of the counter-examples in \citet{qiyang_wellner}, specifically Proposition 3 where the authors show a small-ball condition defined in their Section 5.2.1 fails.\footnote{The proposition and section referred to correspond to an earlier version of a preprint for this work, namely arXiv:1706.02410v1.} By the Paley-Zygmund inequality, this will also imply the failure of $L$-sub-Gaussianity. Define the sequence of functions $\{f_n\}_{n\in\NN}$ as follows. Let $f_n(x)$ be $0$ everywhere except let its graph form a triangle with vertices at $(n,0)$, $(n+2,0)$, and $(n+1,1)$, or more formally, \begin{align*}
    f_n(x) = \begin{cases}
        0 & x < n \text{ or }x \ge n+2 \\
        x-n & x\in[n, n+1) \\
        n+2-x & x\in[n+1, n+2).
    \end{cases}
\end{align*}
%{\color{red} we need $f_n(x)$ to be $2$-Lipschitz f-n not a $1$-Lipschitz f-n; furthermore $\PP(|f_n(X)|>0)=\PP(X\in(n,n+2))\to 0,$ this does not hold... but if we make the function $2$-Lipschitz then }
This clearly satisfies $f_n(0)=0$ and the $1$-Lipschitz requirement (hence it can be represented as a difference between a $1$-Lipschitz function and the zero function (which is also $1$-Lipschitz)). Now pick any $\eta>0$. Observe that \[\PP(|f_n(X)| \ge \eta \|f_n\|_{\LtwoP})\le \PP(|f_n(X)|>0)\leq\PP(X\in(n,n+2))\to 0,\] the last step using that $X\sim \cN(0,1)$. This implies the small-ball condition fails. Hence $\cF$ is not $L$-sub-Gaussian. 
    \end{proof}

\begin{proof}[Proof of Lemma \ref{lemma:moment:condition:implies:bernstein:concentration}] 
    For $\theta>0$, we multiply the below inequality inside the probability by $\theta$ and exponentiate. Then an application of Markov's inequality yields
    \begin{align}
        \PP\bigg(\sum_{i\in[n]} (Z_i^2 - \EE Z^2) \geq t\bigg) &\leq  \frac{1}{\exp(\theta t)}\cdot \EE \exp\bigg(\theta \sum_{i\in[n]} (Z_i^2 - \EE Z^2)\bigg) \notag \\ &= \frac{1}{\exp(\theta t)}\cdot\prod_{i \in [n]}\EE \exp\left(\theta (Z_i^2 - \EE Z^2)\right). \label{eq:markov:to:bernstein}
    \end{align}
     Similarly, still assuming $\theta>0$ but this time multiplying by $-\theta$ and exponentiating, we have
        \begin{align*}
        \PP\bigg(\sum_{i\in[n]} (Z_i^2 - \EE Z^2\bigg) \leq -t) &\leq \frac{1}{\exp(\theta t)}\cdot\EE \exp\bigg(-\theta\sum_{i\in[n]} (Z_i^2 - \EE Z^2)\bigg) \\ &= \frac{1}{\exp(\theta t)}\cdot\prod_{i \in [n]}\EE \exp\big(-\theta (Z_i^2 - \EE Z^2)\big).
    \end{align*}

    Now we control each $\EE \exp(\eta \theta (Z_i^2 - \EE Z^2))$ from above, where $\eta \in \{\pm 1\}$. We have
    \begin{align*}
        \EE \exp(\eta \theta (Z_i^2 - \EE Z^2)) & = 1 + \sum_{i = 2}^\infty \frac{\theta^i \EE (\eta(Z^2 - \EE Z^2))^i}{i!} \\
        & \leq 1 + \sum_{i = 2}^\infty \frac{\theta^i \EE (|Z^2 - \EE Z^2|)^i}{i!} \\
        & \leq 1 + \sum_{i = 2}^\infty \frac{\theta^i \EE (|Z^2| + |\EE Z^2|)^i}{i!}\\ 
        & \leq 1 + \sum_{i = 2}^\infty \frac{\theta^i 2^{i-1}(\EE Z^{2i} + (\EE Z^2)^i)}{i!}\\
        & \leq 1 +\sum_{i = 2}^\infty \frac{\theta^i 2^{i}\EE Z^{2i}}{i!},
    \end{align*}
    where we used $(\EE Z^2)^i \leq \EE Z^{2i} $ by Jensen's inequality, and $(a+b)^i \leq 2^{i-1} (a^i + b^i)$ for $a,b > 0$ (which follows from Jensen's inequality or induction). Now applying our hypothesis on the $2i$-th moment and recognizing a geometric sum, we have
    \begin{align}
        \EE \exp(\theta \eta (Z_i^2 - \EE Z^2)) & \leq 1 + \sum_{i = 2}^\infty \frac{(2\theta L)^i \EE Z^2}{2 L } \notag\\
        &= 1 +  (2\theta L)^2 \EE Z^2  \sum_{i = 0}^\infty \frac{(2\theta L)^i}{2 L } \notag \\
        & = 1 + \frac{(2\theta)^2 \EE Z^2 L}{2} \frac{1}{1 - 2\theta L} \notag\\
        & \leq \exp\left((2\theta)^2 \EE Z^2 L\cdot \frac{1}{1 - 2\theta L}\right), \label{eq:markov:to:bernstein:part2}
    \end{align}
    provided that $2\theta L < 1$.  The final inequality used the result $1+x \le e^x$ for all $x\in\RR$.

    Next, select $\theta = \frac{t}{8 n \EE Z^2 L + 2 L t}$, so that we indeed have $2 L \theta < 1$. Thus $\tfrac{1}{1- 2\theta L} = \frac{8 n\EE Z^2 L + 2 L t}{8 n \EE Z^2 L}$. Combining \eqref{eq:markov:to:bernstein} and \eqref{eq:markov:to:bernstein:part2},
    \begin{align*}
         \MoveEqLeft \PP(\sum (Z_i^2 - \EE Z^2) \geq t)  \\ &\leq\exp\left((2\theta)^2 \EE Z^2 L \cdot \frac{1}{1 - 2\theta L} - \theta t\right)\\
          &= \exp\left(\frac{(2t)^2 \EE Z^2 L}{(8 n\EE Z^2 L + 2 L t)^2} \frac{8 n\EE Z^2 L + 2 L t}{8 n\EE Z^2 L} - \frac{t^2}{8 n\EE Z^2 L + 2 L t}\right) \\
          &= \exp\left(\frac{4t^2 }{8n\cdot (8 n\EE Z^2 L + 2 L t)}  - \frac{t^2}{8 n\EE Z^2 L + 2 L t}\right) \\
          &\le \exp\left(\frac{4t^2 }{8\cdot (8 n\EE Z^2 L + 2 L t)}  - \frac{t^2}{8 n\EE Z^2 L + 2 L t}\right) \\
          & = \exp\left(-\frac{t^2}{16 n\EE Z^2 L + 4 Lt}\right),
    \end{align*}
    which is what we aimed to show. Finally note that the same bound applies for the other tail.
\end{proof}

\begin{proof}[\hypertarget{proof:lemma:bernstein:nonparam:v2}{Proof of Lemma \ref{lemma:bernstein:nonparam:v2}}]

Suppose we take $Z_i=|f(X_i)-g(X_i)|$ for each $i\in[n]$, noting these are i.i.d. and $\EE Z_i^{2p} \le \EE Z_i^2 L^{p-1}p!/2$ by \eqref{new:bernstein:sufficient:condition}. Note that $\sum_{i=1}^n Z_i^2=\sum_{i=1}^n |f(X_i)-g(X_i)|^2 = n\|f-g\|_{L_2(\PP_n)}^2$ and $\EE[Z_1^2]=\|f-g\|_{L_2(\PP_n)}$. Then by Lemma \ref{lemma:moment:condition:implies:bernstein:concentration} \begin{align}
    \MoveEqLeft \PP\left(n\|f-g\|_{L_2(\PP_n)}^2 - n\|f-g\|_{\LtwoP}^2 \geq t\right)\notag \\ &\vee \PP\left(n\|f-g\|_{L_2(\PP_n)}^2- n\|f-g\|_{\LtwoP}^2 \leq -t\right) \notag \\ &\le \exp\left(-\frac{t^2}{16 nL \|f-g\|_{\LtwoP}^2  + 4 Lt}\right). \label{eq:bernstein:like:bound}
\end{align} An identical bound holds for $\|f-\bar{f}\|_{L_2(\PP_n)}$.

Set $t = C^{-2}\cdot n\|f-g\|_{\LtwoP}^2$. Then we derive the following, where the first and last inequality use $n\|f-g\|_{\LtwoP}^2\ge C^2\delta^2$, and the second inequality uses \eqref{eq:bernstein:like:bound}.
  \begin{align*}
        \MoveEqLeft \PP_{\bar{f}}\left( n\|f-g\|_{L_2(\PP_n)}^2 <(C^2-1)\delta^2 \right) \\ &= \PP_{\bar{f}}\left( n\|f-g\|_{L_2(\PP_n)}^2 < (1-C^{-2})\cdot C^2\delta^2\right)  \\
        &\le  \PP_{\bar{f}}\left( n\|f-g\|_{L_2(\PP_n)}^2 < (1-C^{-2})\cdot n\|f-g\|_{\LtwoP}^2\right) \\
        &= \PP_{\bar{f}}\left( n\|f-g\|_{L_2(\PP_n)}^2 -  n\|f-g\|_{\LtwoP}^2 < \right. \\ &\quad\quad\quad\quad\quad\left. -C^{-2}\cdot n\|f-g\|_{\LtwoP}^2\right) \\
        &\le \exp\left(-\frac{t^2}{16 nL \|f-g\|_{\LtwoP}^2  + 4 Lt}\right)\\
        &=  \exp\left(-\frac{t^2}{16C^2 L t + 4 Lt}\right)\\
        &= \exp\left(-\frac{t}{4L(C^2 + 1)}\right) \\
        &=  \exp\left(-\frac{C^{-2}n\|f-g\|_{\LtwoP}}{4L(C^2 + 1)}\right) \\
        &\le \exp\left(-\frac{\delta^2}{4L(C^2 + 1)}\right).
    \end{align*}

Thus, we have shown that $n\|f-g\|_{L_2(\PP_n)}^2 \ge (C^2-1)\delta^2$ with probability at least \[1-  \exp\left(-\frac{\delta^2}{4L(C^2 + 1)}\right).\]

For the other bound, we now set $Z_i = |f(X_i)-\bar{f}(X_i)|$ for each $i\in[n]$, so that we deduce the same bound \eqref{eq:bernstein:like:bound} with $\|f-\bar{f}\|_{L_2(\PP_n)}$ and $\|f-\bar{f}\|_{\LtwoP}$ instead. Setting $t=\delta^2$ and repeatedly using $n\|f-\bar{f}\|_{\LtwoP}^2<\delta^2$, we have \begin{align*}
    \PP_{\bar{f}}\left( n\|f-\bar{f}\|_{L_2(\PP_n)}^2 > 2\delta^2\right) &\le \PP_{\bar{f}}\left( n\|f-\bar{f}\|_{L_2(\PP_n)}^2 > n\|f-\bar{f}\|_{\LtwoP}^2 + \delta^2\right) \\
    &\le \exp\left(-\frac{t^2}{16 nL \|f-\bar f\|_{\LtwoP}^2  + 4 Lt}\right) \\
    &\le \exp\left(-\frac{t^2}{16 L \delta^2  + 4 Lt}\right) \\
    &= \exp\left(-\frac{\delta^2}{20L}\right).
\end{align*} Thus, we have shown that $n\|f-\bar{f}\|_{L_2(\PP_n)}^2 \le 2\delta^2$ with probability at least \[1- \exp\left(-\frac{\delta^2}{20L}\right).\]

Combining our results with a union bound, with probability at least \[1 -\exp\left(-\frac{\delta^2}{20L}\right)- \exp\left(-\frac{\delta^2}{4L(C^2 + 1)}\right), \] we have both $n\|f-\bar{f}\|_{L_2(\PP_n)}^2 \le 2\delta^2$ and $n\|f-g\|_{L_2(\PP_n)}^2 \ge (C^2-1)\delta^2$.  
\end{proof}

\begin{proof}[\hypertarget{proof:lemma:testing:nonparam:gaussianbound}{Proof of Lemma \ref{lemma:testing:nonparam:gaussianbound}}]

Let $\eta(\vec{X}) =\bar{f}(\vec{X}) - f(\vec{X})$. Then we write
 \begin{align*}
    \MoveEqLeft \|\vec{Y}-f(\vec{X})\|_2^2 - \|\vec{Y}-g(\vec{X})\|_2^2 
        \\ &= \sum_{i=1}^n(Y_i-f(X_i))^2 - \sum_{i=1}^n(Y_i-g(X_i))^2 \\ 
        &= 2\sum_{i=1}^n Y_i(g(X_i)-f(X_i)) + \sum_{i=1}^n f(X_i)^2 - \sum_{i=1}^n g(X_i)^2 \\
        &= 2 \sum_{i=1}^n (\bar{f}(X_i)+\xi_i)(g(x_i)-f(x_i)) + 
            \|f(\vec{X})\|_2^2 - \|g(\vec{X})\|_2^2 \\
        &= 2 (\bar{f}(\vec{X}) +\xi)^T(g(\vec{X})-f(\vec{X})) +
            \|f(\vec{X})\|_2^2 - \|g(\vec{X})\|_2^2 \\
        &= 2 (f(\vec{X})+\eta(\vec{X}) +\xi)^T(g(\vec{X})-f(\vec{X})) +
            \|f(\vec{X})\|_2^2 - \|g(\vec{X})\|_2^2 \\
        &= 2 f(\vec{X})^T(g(\vec{X})-f(\vec{X})) + 2\eta^T(g(\vec{X})-f(\vec{X})) \\ 
        &\quad\quad\quad 
            2\xi(\vec{X})^T(g(\vec{X})-f(\vec{X})) + \|f(\vec{X})\|_2^2 - \|g(\vec{X})\|_2^2  \\
        &= - \|f(\vec{X})-g(\vec{X})\|_2^2 + 2\eta(\vec{X})^T(g(\vec{X})-f(\vec{X})) \\ 
        &\quad\quad\quad + 2\xi^T(g(\vec{X})-f(\vec{X})).
\end{align*} 
Observe that conditional on $\vec{X}$, by applying the Cauchy-Schwarz inequality, this quantity is normal with mean at most \[- \|f(\vec{X})-g(\vec{X})\|_2^2+ 2\|\eta(\vec{X})\|_2 \|f(\vec{X})-g(\vec{X})\|_2 \] and variance $4n\sigma^2 \|f-g\|_{L_2(\PP_n)}^2$.

We now derive estimates on $\|\eta(\vec{X})\|_2$ and $\|f-g\|_{L_2(\PP_n)}^2$ that hold with high probability. Suppose $n\|f-\bar{f}\|_{\LtwoP}^2<\delta^2$. From Lemma \ref{lemma:bernstein:nonparam:v2}, this condition along with our assumption $n\|f-g\|_{\LtwoP}^2\ge C^2\delta^2$ implies that with probability at least $1-\exp\left(-\frac{\delta^2}{20L}\right)- \exp\left(-\frac{\delta^2}{4L(C^2 + 1)}\right)$, we have  \begin{align} \label{eq:nonparam:testing_lemma:twoinequalities}
    n\|f-g\|_{L_2(\PP_n)}^2 \ge (C^2-1)\delta^2,\quad n\|f-\bar{f}\|_{L_2(\PP_n)}^2 \le 2\delta^2.
\end{align} Let $E$ denote the event that both inequalities of \eqref{eq:nonparam:testing_lemma:twoinequalities} hold, so \[\PP_{\bar{f}}(E^c) \le\exp\left(-\frac{\delta^2}{20L}\right) +\exp\left(-\frac{\delta^2}{4L(C^2 + 1)}\right).\]

The first inequality in \eqref{eq:nonparam:testing_lemma:twoinequalities} implies \[\|f(\vec{X})-g(\vec{X})\|_2 =\sqrt{n}\|f-g\|_{L_2(\PP_n)} \ge \delta\sqrt{C^2-1}\] in which case \[\delta \le \frac{\|f(\vec{X})-g(\vec{X})\|_2}{\sqrt{C^2-1}}.\] The other inequality simply states that $\|\eta(\vec{X})\|_2\le \delta\sqrt{2}$. Thus, the inequalities of \eqref{eq:nonparam:testing_lemma:twoinequalities} imply \[\|\eta(\vec{X})\|_2 \le \delta\sqrt{2} \le \frac{\sqrt{2}}{\sqrt{C^2-1}}\cdot \|f(\vec{X})-g(\vec{X})\|_2. \]

 Thus, conditional on $\vec{X}$, the event $E$ implies $\|\vec{Y}-f(\vec{X})\|_2^2 - \|\vec{Y}-g(\vec{X})\|_2^2$ is normal with mean bounded from above by
\begin{align*}
    \MoveEqLeft - \|f(\vec{X})-g(\vec{X})\|_2^2+ 2\|\eta(\vec{X})\|_2 \|f(\vec{X})-g(\vec{X})\|_2 \\ &\le \left(-1 +\frac{2\sqrt{2}}{\sqrt{C^2-1}}\right)\cdot\|f(\vec{X})-g(\vec{X})\|_2^2 \\
    &=  \underbrace{\left(-1 +\frac{2\sqrt{2}}{\sqrt{C^2-1}}\right)}_{<0}\cdot n\|f-g\|_{L_2(\PP_n)}^2   
\end{align*} and variance $4n\sigma^2 \|f-g\|_{L_2(\PP_n)}^2$. Note that $\frac{2\sqrt{2}}{\sqrt{C^2-1}}<1$ because $C>3$. 

Using a bound from \citet[Section 2.2.1]{van1996weak}, we note that for any $\rho<0<\tau$,
\begin{align*}
    \PP(\cN(\rho, \tau^2) \geq 0) \leq \exp(-\rho^2/(2\tau^2)). 
\end{align*} So conditional on $\vec{X}$ and the event $E$, $\psi = 1$ holds with probability bounded by 
\begin{align*}
\MoveEqLeft \exp\bigg(-\frac{\big(-1+\frac{2\sqrt{2}}{\sqrt{C^2-1}}\big)^2n^2 \|f-g\|_{L_2(\PP_n)}^4}{8n\sigma^2 \|f-g\|_{L_2(\PP_n)}^2}\bigg) \\ &=
    \exp\bigg(-\frac{\big(-1+\frac{2\sqrt{2}}{\sqrt{C^2-1}}\big)^2n 
        \|f-g\|_{L_2(\PP_n)}^2}{8\sigma^2} \bigg) \\
        &\le \exp\bigg(-\frac{\left(-1+\frac{2\sqrt{2}}{\sqrt{C^2-1}}\right)^2 (C^2-1)\delta^2}{8\sigma^2}\bigg) \\
        &= \exp\bigg(-\frac{\left(\sqrt{C^2-1}-2\sqrt{2}\right)^2 \delta^2}{8\sigma^2}\bigg).
\end{align*} Since $\PP_{\bar{f}}(\psi = 1|\vec{X},E)$ has an upper bound not depending on $\vec{X}$, it follows (by taking an expectation) that  \[\PP_{\bar{f}}(\psi = 1|E) \le \exp\bigg(-\frac{\left(\sqrt{C^2-1}-2\sqrt{2}\right)^2 \delta^2}{8\sigma^2}\bigg).\] Thus, \begin{align*}
    \MoveEqLeft\PP_{\bar{f}}(\psi = 1) \\ &= \PP_{\bar{f}}(\psi = 1|E)\cdot \PP_{\bar{f}}(E) + \PP_{\bar{f}}(\psi = 1|E^c)\cdot \PP_{\bar{f}}(E^c) \\
                 &\le \PP_{\bar{f}}(\psi = 1|E) + \PP_{\bar{f}}(E^c) \\
                 &\le \exp\left(-\frac{\left(\sqrt{C^2-1}-2\sqrt{2}\right)^2 \delta^2}{8\sigma^2}\right)+ \exp\left(-\frac{\delta^2}{20L}\right) \\ &\quad\quad+ \exp\left(-\frac{\delta^2}{4L(C^2 + 1)}\right).
\end{align*}    

If we instead assumed $n\|g-\bar{f}\|_{\LtwoP}^2<\delta^2$, a completely symmetric argument would result in a identical bound for $\PP_{\bar{f}}(\psi = 0)$. Thus, we conclude that \begin{align*}
    \MoveEqLeft \sup_{\bar{f}: n\|f-\bar{f}\|_{\LtwoP}^2<\delta^2}\PP_{\bar{f}}(\psi = 1) \vee \sup_{\bar{f}: n\|g-\bar{f}\|_{\LtwoP}^2<\delta^2}\PP_{\bar{f}}(\psi = 0) \\ &\le  \exp\left(-\frac{\left(\sqrt{C^2-1}-2\sqrt{2}\right)^2 \delta^2}{8\sigma^2}\right) + \exp\left(-\frac{\delta^2}{20L}\right) \\ &\quad\quad+\exp\left(-\frac{\delta^2}{4L(C^2 + 1)}\right).
\end{align*}

We define \[\Lambda(C,\sigma,L) = \min\left\{\frac{\left(\sqrt{C^2-1}-2\sqrt{2}\right)^2}{8\sigma^2}, \frac{1}{20L},\frac{1}{4L(C^2+1)} \right\},\] 
and replace the preceding upper bound by $3 \exp\left(-\Lambda(C,\sigma,L)\cdot\delta^2\right).$

\end{proof}

\begin{proof}[\hypertarget{proof:lemma:nonparam:intermediate_error}{Proof of Lemma \ref{lemma:nonparam:intermediate_error}}] For some $i\in[M]$, possibly distinct from $i^{\ast}$, we have $\|f_i - \bar{f}\|_{\LtwoP} \le \delta$ which means $n\|f_i - \bar{f}\|_{\LtwoP}^2 \le n\delta^2$. By the triangle inequality, we observe \begin{align*}
        \|f_{i^{\ast}} - \bar{f}\|_{\LtwoP} &\le  \|f_{i^{\ast}} - f_i\|_{\LtwoP} + \|f_i- \bar{f}\|_{\LtwoP} \\ &\le  \|f_{i^{\ast}} - f_i\|_{\LtwoP} +\delta.
    \end{align*} Note that  $\|\vec{Y}-f_i(\vec{X})\|_2^2 \ge \|\vec{Y}-f_{i^{\ast}}(\vec{X})\|_2^2$ since $f_{i^{\ast}}$ was defined to be the least squares estimator.
    
     Putting these facts together, \begin{align}
        \MoveEqLeft \PP_{\bar{f}}\big(\|f_{i^{\ast}}- \bar{f}\|_{\LtwoP} > (C +1)\delta \big) \le \PP_{\bar{f}}\left( \|f_{i^{\ast}}- f_i\|_{\LtwoP} > C\delta \right) \notag \\
        &\le \PP_{\bar{f}}\Big(\exists j:\|f_i - f_j\|_{\LtwoP} > C\delta \text{ and } \notag \\ &\qquad\qquad \|\vec{Y}-f_i(\vec{X})\|_2^2 \ge \|\vec{Y}-f_j(\vec{X})\|_2^2 \Big)\notag  \\
        &=\PP_{\bar{f}}\Big(\exists j:n\|f_i - f_j\|_{\LtwoP}^2 > C^2n\delta^2 \text{ and } \notag \\ &\qquad\qquad\|\vec{Y}-f_i(\vec{X})\|_2^2 \ge \|\vec{Y}-f_j(\vec{X})\|_2^2 \Big) \notag \\
        &\le \sum_j \PP_{\bar{f}}\Big(n\|f_i - f_j\|_{\LtwoP}^2 > C^2n\delta^2 \text{ and }\notag \\ &\qquad\qquad\|\vec{Y}-f_i(\vec{X})\|_2^2 \ge \|\vec{Y}-f_j(\vec{X})\|_2^2 \Big)  \notag \\
        &\le 3 M\exp\big(-\Lambda(C,\sigma, L)\cdot n\delta^2\big).\label{eq:lemma:nonparam:intermediate_error}
    \end{align} where we used Lemma \ref{lemma:testing:nonparam:gaussianbound} in \eqref{eq:lemma:nonparam:intermediate_error} with $\sqrt{n}\delta$ in place of $\delta$.        
    \end{proof}

\begin{proof}[\hypertarget{proof:subgaussian_remark_for_lemma_testing}{Proof of Remark \ref{subgaussian_remark_for_lemma_testing}}] In the proof of Lemma \ref{lemma:testing:nonparam:gaussianbound}, we first showed that \begin{align*}
 \|\vec{Y}-f(\vec{X})\|_2^2 - \|\vec{Y}-g(\vec{X})\|_2^2   &= - \|f(\vec{X})-g(\vec{X})\|_2^2 + 2\eta(\vec{X})^T(g(\vec{X})-f(\vec{X})) \\ 
        &\quad\quad\quad + 2\xi^T(g(\vec{X})-f(\vec{X})).
        \end{align*} Conditional on $\vec{X}$, this quantity is sub-Gaussian with mean bounded by \[- \|f(\vec{X})-g(\vec{X})\|_2^2+ 2\|\eta(\vec{X})\|_2 \|f(\vec{X})-g(\vec{X})\|_2. \] Moreover, the variance proxy can be shown to be $4n\sigma^2\|f-g\|_{L_2(\PP_n)}^2$ using the definition of a sub-Gaussian random variable. The proof of Lemma \ref{lemma:testing:nonparam:gaussianbound} then defines the event $E$ as when the inequalities in \eqref{eq:nonparam:testing_lemma:twoinequalities} hold, and $E$ satisfies \[\PP_{\bar{f}}(E^c) \le \exp\left(-\tfrac{\delta^2}{20L}\right)+ \exp\left(-\tfrac{\delta^2}{4L(C^2 + 1)}\right).\] Noting that Lemma \ref{lemma:bernstein:nonparam:v2} still holds with sub-Gaussian noise, we obtain as in Lemma \ref{lemma:testing:nonparam:gaussianbound} that conditional on $\vec{X}$, the event $E$ implies $\|\vec{Y}-f(\vec{X})\|_2^2 - \|\vec{Y}-g(\vec{X})\|_2^2$ is sub-Gaussian with mean bounded above by the negative quantity \[\left(-1+\tfrac{2\sqrt{2}}{\sqrt{C^2-1}}\right)\cdot n\|f-g\|_{L_2(\PP_n)}^2\] and variance proxy $4n\sigma^2\|f-g\|_{L_2(\PP_n)}^2$. A sub-Gaussian random variable $Z$ with mean $\rho$ and variance proxy $\tau^2$ can be shown to satisfy $P(Z>t+\rho)\le \exp(-t^2/2\tau^2)$ for any $t>0$ \citep[Section 2.3]{boucheron2013}, and if $\rho<0$, we can take $t = -\rho$ and obtain a bound of $P(Z>0)\le \exp(-\rho^2/2\tau^2)$. Applying this result, as in the previous lemma, we conclude that conditional on $\vec{X}$ and the event $E$, $\psi=1$ holds with probability bounded by \[\exp\left(-\tfrac{\left(\sqrt{C^2-1}-2\sqrt{2}\right)^2 \delta^2}{8\sigma^2}\right).\] The rest of the proof follows identically to Lemma \ref{lemma:testing:nonparam:gaussianbound}. Then we can use this generalized Lemma to obtain Lemmas \ref{lemma:nonparam:intermediate_error} and \ref{lemma:equivalent:norms} without any additional work, since other than using the bounds in Lemma \ref{lemma:testing:nonparam:gaussianbound}, normality is not required.
\end{proof}

    \begin{proof}[\hypertarget{proof:theorem:nonparam:upperbound:main}{Proof of Theorem \ref{theorem:nonparam:upperbound:main}}]

    If $J^{\ast}=1$, then $\varepsilon_{J^{\ast}}\asymp d$ and the risk bound trivially holds since $d$ is the $\LtwoP$ diameter. Suppose henceforth $J^{\ast}>1$.

    Define the event $A_j=\{\|\Upsilon_j-\bar f\|_{\LtwoP}>\tfrac{d}{2^{j-1}}\}$. Observe that $\PP(A_1)=0$ by definition of the diameter, and basic set inclusion properties imply $\PP(A_J) \le \PP(A_1) +\sum_{j=2}^J \PP(A_j\cap A_{j-1}^c)$. Let use now bound the probability of each of these intersections.
    
    \noindent\textsc{Part 1:} Bounding $\PP(A_j\cap A_{j-1}^c)$ for $j\ge 2$.

    We start with $j\ge 3$. Set $\delta = \tfrac{d}{2^{j-1}(C+1)}$. We claim the following holds: \begin{align}
         \MoveEqLeft \PP(A_j\cap A_{j-1}^c) \notag \\ &= \PP(\|\bar f -\Upsilon_j\|_{\LtwoP} > \tfrac{d}{2^{j-1}}, \|\bar f - \Upsilon_{j-1}\|_{\LtwoP}\le \tfrac{d}{2^{j-2}}) \notag \\ 
        &\le \sum_{g\in\cL(j-1)\cap B(\bar f, \tfrac{d}{2^{j-2}})}\PP(\|\bar f -\Upsilon_j\|_{\LtwoP} > \tfrac{d}{2^{j-1}},\Upsilon_{j-1}=g) \notag \\
        &= \sum_{g\in\cL(j-1)\cap B(\bar f, \tfrac{d}{2^{j-2}})}\PP(\|\bar f -f_{i^{\ast}}\|_{\LtwoP} > (C+1)\delta,\Upsilon_{j-1}=g) \notag \\
        &\le \sum_{g\in\cL(j-1)\cap B(\bar f, \tfrac{d}{2^{j-2}})}\PP(\|\bar f -f_{i^{\ast}}\|_{\LtwoP} > (C+1)\delta). \label{eq:A_j:and:A_j_minus_one:intermediate}
    \end{align} The second line applies a union bound: if
    $\|\bar f-\Upsilon_{j-1}\|_{\LtwoP} \le \tfrac{d}{2^{j-2}}$, then by definition of $\cL(j-1)$ as the $(j-1)$th level of our tree, $\Upsilon_{j-1}=g$ where $g\in\cL(j-1)\cap B(\bar f, \tfrac{d}{2^{j-2}})$. For the third line, we substitute the definition of $\delta$ and then recall $\Upsilon_j= f_{i^{\ast}}$ where $i^{\ast}=\argmin_{i\in [M]}\sum_{j=1}^n (Y_j-f_i(X_j))^2$ and the $\cO(g)=\{f_1,\dots,f_M\}$. We then drop the intersection with $\Upsilon_{j-1}=g$ for the final inequality.
    
    Now, using the definition of $\cO(g)$ and Lemma \ref{lemma:nonparam:pruned:tree:properties}, note that $f_1,\dots,f_M$ form a $\tfrac{d}{2^{j-2}c}=\tfrac{d}{2^{j-1}(C+1)}=\delta$-covering of the set $\cF'=B(g, \tfrac{d}{2^{j-2}})\cap \cF$ with cardinality of this set is bounded by $\cMFloc(\tfrac{d}{2^{j-2}},2c)$. Thus, we can apply Lemma \ref{lemma:nonparam:intermediate_error} to \eqref{eq:A_j:and:A_j_minus_one:intermediate}, noting requiring $c>8$ implies $C>3$ as required. Moreover, the cardinality of $\cL(j-1)\cap B(\bar f, \tfrac{d}{2^{j-2}})$ is also bounded by  $\cMFloc(\tfrac{d}{2^{j-2}},2c)$ by Lemma \ref{lemma:nonparam:bound:level:J:intersect:ball:mu}. Thus, continuing from \eqref{eq:A_j:and:A_j_minus_one:intermediate}, we have 
    \begin{align*}
         \PP(A_j\cap A_{j-1}^c) &\le \sum_{g\in\cL(j-1)\cap B(\bar f, \tfrac{d}{2^{j-2}})}3 \cMFloc(\tfrac{d}{2^{j-2}},2c)\exp\left(-\Lambda(C,\sigma,L)\cdot n\delta^2\right) \\
        &\le 3 \left[\cMFloc(\tfrac{d}{2^{j-2}},2c)\right]^2 \exp\left(-\Lambda(C,\sigma,L)\cdot n\delta^2\right) \\
        &= 3 \left[\cMFloc(\tfrac{c\varepsilon_j}{\sqrt{\Lambda(\tfrac{c}{2}-1,\sigma,L)}},2c)\right]^2 \exp\left(-n\varepsilon_j^2\right).
    \end{align*} The last step used our definition of $\varepsilon_j$, $\delta$, and $C$.

    Next, for $j=2$, note that $\PP(A_2\cap A_1^c)=\PP(A_2)$ since $\PP(A_1)=0$. Set $\delta = \tfrac{d}{2(C+1)}=\tfrac{d}{c}$. Well note that $\Upsilon_2$ is chosen from a maximal $\delta=\tfrac{d}{c}$-packing (hence covering) of $B(f_R,d)\cap\cF=B(\bar f, d)\cap \cF=\cF$. Such a packing has cardinality bounded by $\cMFloc(d, c)\le \cMFloc(d, 2c)$, using Lemma \ref{simple:lemma:monotone:nonparam}. Again applying Lemma \ref{lemma:nonparam:intermediate_error}, we have    \begin{align*} 
        \PP(A_2) &= \PP(\|\Upsilon_j-\bar{f}\|_{\LtwoP} > \delta(C+1)) \\ &\le 3\cMFloc(d, 2c)\exp(-\Lambda(C,\sigma,L)\cdot n\delta^2)  \\
        &= 3\cMFloc(\tfrac{c\varepsilon_2}{\sqrt{\Lambda(\tfrac{c}{2}-1,\sigma,L)}}, 2c)\exp(-n\varepsilon_2^2) \\
        &\le 3\left[\cMFloc(\tfrac{c\varepsilon_2}{\sqrt{\Lambda(\tfrac{c}{2}-1,\sigma,L)}}, 2c)\right]^2\exp(-n\varepsilon_2^2). 
    \end{align*}

    \noindent\textsc{Part 2:} Deducing a bound on $\PP(A_J)$ for any $1\le J \le J^{\ast}$.

    We now sum over our bounds using the $\PP(A_J) \le \sum_{j=2}^J \PP(A_j\cap A_{j-1}^c)$ fact, noting that by Lemma \ref{simple:lemma:monotone:nonparam}, \[\cMFloc(\tfrac{c\varepsilon_j}{\sqrt{\Lambda(\tfrac{c}{2}-1,\sigma,L)}}, 2c)\le \cMFloc(\tfrac{c\varepsilon_J}{\sqrt{\Lambda(\tfrac{c}{2}-1,\sigma,L)}}, 2c).\] Taking $a_J=\exp(-n\varepsilon_J^2)$, we obtain \begin{align*}
        \PP(A_J) &\le 3 \sum_{j=2}^J \left[\cMFloc(\tfrac{c\varepsilon_j}{\sqrt {\Lambda(\tfrac{c}{2}-1,\sigma,L)}}, 2c)\right]^2\exp(-n\varepsilon_j^2) \\
        &\le  3 \left[\cMFloc(\tfrac{c\varepsilon_J}{\sqrt {\Lambda(\tfrac{c}{2}-1,\sigma,L)}}, 2c)\right]^2\sum_{j=2}^J \exp(-n\varepsilon_j^2) \\
        &\le  3\left[\cMFloc(\tfrac{c\varepsilon_J}{\sqrt{\Lambda(\tfrac{c}{2}-1,\sigma,L)}}, 2c)\right]^2 \cdot \frac{a_J}{1-a_J}.
    \end{align*}
    Since $J^{\ast}>1$ and $1\le J\le J^{\ast}$, it follows \eqref{eq:upper_bound_condition:main_theorem} holds for $J$. Well, the first lower bound in \eqref{eq:upper_bound_condition:main_theorem} implies $\left[\cMFloc(\tfrac{c\varepsilon_J}{\sqrt{\Lambda(\tfrac{c}{2}-1,\sigma,L)}}, 2c)\right]^2 \le \exp(n\varepsilon_J^2/2)$ and the second lower bound implies $\tfrac{1}{1-a_J}\le 2$. Therefore, \begin{align*}
        \PP(A_J) \le 6 \exp(n\varepsilon_J^2/2)\cdot a_J = 6\exp(-n\varepsilon_J^2/2).
    \end{align*} 

    For the following parts, let $B_j$ be the event $\{\|\bar f-f^{\ast}\|_{\LtwoP}>\kappa \varepsilon_j\}$ where $\kappa= \tfrac{1+3c}{2\sqrt{\Lambda(c/2-1,\sigma,L)}}$ and $f^{\ast}$ is the output of precisely $J^{\ast}-1$ steps of the algorithm, i.e., $\Upsilon_{J^{\ast}}$. Recall $f^{\ast\ast}$ is the output of $J^{\ast\ast}\ge J^{\ast}$ steps, i.e., $\Upsilon_{J^{\ast\ast}+1}$.
    
    \noindent\textsc{Part 3:} Bounding $\PP(B_J)$ for integers $-\infty < J \le J^{\ast}$.

    First consider $1\le J\le J^{\ast}$. We can show $\PP(B_J)\le \PP(A_J)$ for such $J$ since if $B_J$ occurs, using the triangle inequality and Lemma \ref{lemma:nonparam:cauchy:sequence}, \begin{align*}
        \kappa \varepsilon_J &< \|\bar f-f^{\ast}\|_{\LtwoP} \\ &\le \|\bar f-\Upsilon_J\|_{\LtwoP} +\|\Upsilon_J-f^{\ast}\|_{\LtwoP} \\ &\le \|\bar f-\Upsilon_J\|_{\LtwoP} + \tfrac{d(2+4c)}{c2^J},
    \end{align*} which rearranging and substituting in $\kappa$ and $\varepsilon_J$ implies \begin{align}
        \|\bar f-\Upsilon_J\|_{\LtwoP} > \kappa \varepsilon_J - \tfrac{d(2+4c)}{c2^J} = \tfrac{d}{2^{J-1}}. \label{eq:temp:B_J:bound}
    \end{align} Thus $B_J$ implies $A_J$, yielding \begin{align*}
        \PP(B_J)\le \PP(A_J) \le  6\exp(-n\varepsilon_J^2/2).
    \end{align*}

    Noting then that $\kappa\varepsilon_J = \tfrac{(2+6c)d}{c2^J}> 6d$ for $J\le 0$, clearly $\PP(B_J)=0$ for all $J\le 0$. Thus our bound in \eqref{eq:temp:B_J:bound} remains valid all $-\infty<J\le J^{\ast}$.

    \noindent\textsc{Part 4:} Bounding $\PP(\|\bar f-f^{\ast\ast}\|_{\LtwoP}>\kappa' x)$ for $x\ge \varepsilon_{J^{\ast}}$ and $\kappa'=(2+\tfrac{1+2c}{1+3c})\kappa$.

    Pick any $x\ge \varepsilon_{J^{\ast}}$. Then for some integer $-\infty <J\le J^{\ast}$, $x\in[\varepsilon_{J},\varepsilon_{J-1})$ since $\bigcup_{J\le J^{\ast}}[\varepsilon_J,\varepsilon_{J-1}) = [\varepsilon_{J^{\ast}},\infty)$. We have by Lemma \ref{lemma:nonparam:cauchy:sequence} and definitions of $\kappa$, $\varepsilon_J$, $f^{\ast}$, and $f^{\ast\ast}$ that \begin{align*}
        \|\bar f-f^{\ast\ast}\|_{\LtwoP} &\le \|\bar f - f^{\ast}\|_{\LtwoP} + \|f^{\ast}-f^{\ast\ast}\|_{\LtwoP} \\
        &= \|\bar f - f^{\ast}\|_{\LtwoP} + \|\Upsilon_{J^{\ast}}- \Upsilon_{J^{\ast\ast}+1}\|_{\LtwoP} \\ &\le \|\bar f - f^{\ast}\|_{\LtwoP} + \tfrac{d(2+4c)}{c2^{J^{\ast}}} \\
        &= \|\bar f - f^{\ast}\|_{\LtwoP} + \tfrac{1+2c}{1+3c}\kappa\varepsilon_J^{\ast}\\
        &\le  \|\bar f - f^{\ast}\|_{\LtwoP} + \tfrac{1+2c}{1+3c}\kappa x.
    \end{align*} Consequently, recalling the definition of $\kappa'$ and noting that $2\kappa x \ge \kappa \varepsilon_{J-1}$ since $x\ge\varepsilon_J=\varepsilon_{J-1}/2$, we obtain \begin{align*}
        \PP(\|\bar f-f^{\ast\ast}\|_{\LtwoP}>\kappa' x) &\le \PP( \|\bar f - f^{\ast}\|_{\LtwoP} + \tfrac{1+2c}{1+3c}\kappa x > \kappa'x) \\
        &= \PP( \|\bar f - f^{\ast}\|_{\LtwoP}  > 2\kappa x) \\ 
        &\le \PP( \|\bar f - f^{\ast}\|_{\LtwoP}  > \kappa \varepsilon_{J-1}) \\
        &= \PP(B_{J-1}) \\
        &\le 6\exp(-n\varepsilon_{J-1}^2/2) \\
        &\le 6\exp(-nx^2/2).
    \end{align*} The last few lines used the definition of the event $B_{J-1}$ and then that $x< \varepsilon_{J-1}.$

    \noindent\textsc{Part 5:} Bounding $\EE_{\vec{X},\vec{Y}}\|\bar{f} - f^{\ast\ast}(\vec{X},\vec{Y})\|_{\LtwoP}^2.$ 

    We integrate the bound from Part 4: \begin{align*}
       \MoveEqLeft \EE_{\vec{X},\vec{Y}}\|\bar{f} - f^{\ast\ast}(\vec{X},\vec{Y})\|_{\LtwoP}^2 \\ &= \int_0^{\infty} \PP(\|\bar f-f^{\ast\ast}\|_{\LtwoP}^2> x)\mathrm{d}x \\
        &= 2{\kappa'}^2\int_0^{\infty} t\cdot \PP(\|\bar f-f^{\ast\ast}\|_{\LtwoP}> \kappa't)\mathrm{d}t \\
        &\le 2{\kappa'}^2\int_0^{\varepsilon_J^{\ast}} t \mathrm{d}t + 2{\kappa'}^2\int_{\varepsilon_{J^{\ast}}}^{\infty}  t\cdot \PP(\|\bar f-f^{\ast\ast}\|_{\LtwoP}> \kappa't)\mathrm{d}t \\
        &\le {\kappa'}^2 \varepsilon_{J^{\ast}}^2 +12{\kappa'}^2\int_{\varepsilon_{J^{\ast}}}^{\infty}t\cdot\exp(-nt^2/2)\mathrm{d}t \\
        &= {\kappa'}^2 \varepsilon_{J^{\ast}}^2 +12{\kappa'}^2\cdot n^{-1}\exp(-n\varepsilon_{J^{\ast}}^2/2).
    \end{align*}
        
        Our assumption \eqref{eq:upper_bound_condition:main_theorem}  implies $n^{-1} \le \varepsilon_{J^{\ast}}^2/\log 2$, while $\exp(-n\eta_{J^{\ast}}^2/2)\le 1$, so the entire bound bounded by $\varepsilon_{J^{\ast}}^2$ up to constants depending on $c$, $\sigma$, and $L$.
    \end{proof}

    \begin{proof}[\hypertarget{proof:theorem:bounded:minimax:achieved}{Proof of Theorem \ref{theorem:bounded:minimax:achieved}}]
        The $\varepsilon^{\ast}=0$ edge case implies $d=0$ and we trivially achieve the minimax rate of $0$. Assume henceforth $\varepsilon^{\ast}>0$.
        
        \textsc{Case 1:} Suppose $n{\varepsilon^{\ast}}^2 >8\log 2$.
         Set $\alpha = \min(\sigma/2,1)\in(0,1]$ and $\delta^{\ast}=\alpha \varepsilon^{\ast}$. Note that $\alpha^2 \le \frac{\sigma^2}{4}$. Then using the monotonicity of  $\cMFloc(\cdot, c)$, we obtain \begin{align*}
            \log \cMFloc(\delta^{\ast}, c) &\ge \lim_{t\downarrow 0}\log \cMFloc(\varepsilon^{\ast}-t, c) \ge \lim_{t\downarrow 0}n({\varepsilon^{\ast}}-t)^2  = \frac{n{\varepsilon^{\ast}}^2}{2} +\frac{n{\varepsilon^{\ast}}^2}{2} \\ &> \frac{n{\delta^{\ast}}^2}{2\alpha^2} +4\log 2 \ge 4\cdot\frac{n{\delta^{\ast}}^2}{2\sigma^2} + 4\log2 \ge 4\left( \frac{n{\delta^{\ast}}^2}{2\sigma^2} \vee \log 2\right).
        \end{align*} Thus, the lower bound condition from Lemma \ref{lemma:nonparam:lowerbound} is satisfied for $\delta^{\ast}$, so indeed ${\varepsilon^{\ast}}^2$ is a lower bound for the minimax rate up to absolute constants.

        We now show ${\varepsilon^{\ast}}^2$ is an upper bound. Following \cite[Theorem 12]{lecamshamindramatey2022}, we set $\eta = \frac{c}{\sqrt{\Lambda(c/4-1,\sigma,L)}}\wedge 1\in(0,1)$, pick $D$ sufficiently large such that $D\eta>1$, and set $\delta =2D\varepsilon^{\ast}$. Again using the definition of $\varepsilon^{\ast}$ as a supremum and monotonicity of $\cMFloc(\cdot, c)$, we have \begin{align*}
            n\delta^2 &= 4D^2 n{\varepsilon^{\ast}}^2 = \frac{4}{\eta^2} \cdot n  (D\eta \cdot \varepsilon^{\ast})^2 > \frac{4}{\eta^2}\cdot \log \cMFloc(D\eta\varepsilon^{\ast}, c) \\ &\ge 4\log \cMFloc(D\eta\varepsilon^{\ast}, c) 
            \ge 4\log \cMFloc\left(\frac{c D\varepsilon^{\ast}}{\sqrt{\Lambda(c/4-1,\sigma,L)}}, c\right) \\ &= 4\log \cMFloc\left(\frac{c\cdot\delta}{2\cdot\sqrt{\Lambda(c/4-1,\sigma,L)}}, c\right).
        \end{align*} Further, since $n{\varepsilon^{\ast}}^2 > 8 \log 2$ and clearly $D>1$, we have $n\delta^2 = 4n D^2 {\varepsilon^{\ast}}^2> 32D^2\log 2>\log 2$, so indeed \[n\delta^2 > 2\log \left[\cMFloc\left(\frac{c\cdot\delta}{2\sqrt{\Lambda(c/4-1,\sigma,L)}}, c\right)\right]^2\vee \log 2.\]
 In other words, $\delta$ satisfies the condition in \eqref{eq:upper_bound_condition:main_theorem}, noting by Remark \ref{remark:swapping:2c:for:c} we may swap out $2c$ with $c$.
 
 Now consider the non-decreasing map \[\psi\colon 0<x\mapsto nx^2 -  2\log \left[\cMFloc\left(x\cdot\frac{ c}{2\sqrt{\Lambda(c/4-1,\sigma,L)}}, c\right)\right]^2\vee \log 2.\] We have shown that $\psi(\delta)>0$. Recall our definition of $\varepsilon_{J^{\ast}}$ from Theorem \ref{theorem:nonparam:upperbound:main}, and let us  show that $\delta \ge  \varepsilon_{J^{\ast}}/2$. Suppose no $J\ge 1$ exists such that $\varepsilon_J$ satisfies \eqref{eq:upper_bound_condition:main_theorem}, in which case $J^{\ast}=1$. Then $\psi(\varepsilon_{J^{\ast}}/2)\le \psi(\varepsilon_{J^{\ast}})<0<\psi(\delta)$ so $\delta \ge \varepsilon_{J^{\ast}}/2$. On the other hand, suppose some $J$ does exist such that $\varepsilon_J$ satisfies \eqref{eq:upper_bound_condition:main_theorem}.  Because $J^{\ast}<\infty$ is chosen maximally in our algorithm, $\varepsilon_{J^{\ast}+1}=\varepsilon_{J^{\ast}}/2$ does not satisfy \eqref{eq:upper_bound_condition:main_theorem}, so we have $\psi(\varepsilon_{J^{\ast}}/2) = \psi(\varepsilon_{J^{\ast}+1}) <0 < \psi(\delta)$, so $\delta \ge \varepsilon_{J^{\ast}}/2$. 
 
Since Theorem  \ref{theorem:nonparam:upperbound:main} upper bounds the minimax rate by $\varepsilon_{J^{\ast}}^2$ and we just showed $\delta\gtrsim \varepsilon_{J^{\ast}}$, the rate is also upper bounded by $\delta^2\asymp {\varepsilon^{\ast}}^2$. Thus, the minimax has a lower and upper bound of rate ${\varepsilon^{\ast}}^2$ up to absolute constants. But if ${\varepsilon^{\ast}}^2$ is a lower bound, then so is ${\varepsilon^{\ast}}^2\wedge d^2$; further, the squared diameter is always an upper bound for the minimax rate, so  ${\varepsilon^{\ast}}^2\wedge d^2$ is also an upper bound up to absolute constants. Thus, our minimax rate is ${\varepsilon^{\ast}}^2\wedge d^2$ up to absolute constants.

    \textsc{Case 2:} Suppose  $n{\varepsilon^{\ast}}^2\le 8\log 2$.  
    Observe by definition of $\varepsilon^{\ast}$ as a supremum that \begin{align} \label{eq:8log2:metric:entropy:bound}
        \log \cMFloc(2\varepsilon^{\ast}, c) \le 4n {\varepsilon^{\ast}}^2 \le 32\log 2.
    \end{align}

    Now repeating the logic of \citet[Lemma 1.3]{prasadan2024informationtheoreticlimitsrobust} for star-shaped sets, there exists a line segment in $\cF$ of length $d/3$ in the $\LtwoP$ distance.
    Form a ball $B$ of radius $2\varepsilon^{\ast}$ centered at the midpoint of this line segment. Consider the diameter of $B$ that coincides with the chosen line segment in $\cF$ at infinitely many points. Since an interval of length $4\varepsilon^{\ast}$ if partitioned into subintervals of length less than $(2\varepsilon^{\ast}/c)$ is composed of at least $2c$ subintervals, for sufficiently large $c$, at  least $\exp(32\log 2)$ many points at least $(2\varepsilon^{\ast}/c)$ distance apart can be chosen along this particular diameter of $B$. All of these points cannot also belong to $\cF$, otherwise we will have formed a $(2\varepsilon^{\ast}/c)$-packing of $B\cap\cF$ with $\exp(32\log 2)$ points, contradicting $\log \cMFloc(2\varepsilon^{\ast}, c)  \le 32\log 2$. This means the line segment of $\cF$ we picked is strictly contained in the diameter of $B$ we picked, so that $d/3\le 4\varepsilon^{\ast} \le 8\sqrt{2n^{-1}\log 2}$, using \eqref{eq:8log2:metric:entropy:bound}.

    Now, consider an $\varepsilon$ which is proportional to $d$. In this case, $n\varepsilon^2$ is upper bounded by some constant since $d \le 24\sqrt{2n^{-1}\log 2}$, while $\log \cMFloc(2\varepsilon, c)$ can be made arbitrarily large by increasing $c$, to ensure the condition from Lemma \ref{lemma:nonparam:lowerbound} for the lower bound is attained. So $\varepsilon^2$ is a lower bound for the minimax-rate up to constants, and thus by proportionality, $d^2$ is a lower bound up to absolute constants. Thus ${\varepsilon^{\ast}}^2\wedge d^2$ is a lower bound for the minimax rate.
    
    But $d^2$ is also always an upper bound for the minimax rate, so the minimax rate is of order $d^2$ up to constants. But in this second case where $n{\varepsilon^{\ast}}^2\le 8\log 2$, we have $d\le 12\varepsilon^{\ast}$, showing that ${\varepsilon^{\ast}}^2$ and thus ${\varepsilon^{\ast}}^2\wedge d^2$ is also an upper bound up to absolute constants. We conclude therefore the minimax rate is ${\varepsilon^{\ast}}^2\wedge d^2$ up to constants. 
   \end{proof}

\section{Proofs for Section \ref{section:nonparam:adaptive}}

\begin{proof}[\hypertarget{proof:theorem:nonparam:upperbound:main:adaptive}{Proof of Theorem \ref{theorem:nonparam:upperbound:main:adaptive}}]

Set $A_j=\{\|\Upsilon_j-\bar f\|_{\LtwoP}>\tfrac{d}{2^{j-1}}\}$. We write $f^{\dagger}=\Upsilon_{J^{\dagger}}$ to be the output of exactly $J^{\dagger}-1$ steps, while we write $f^{\ddagger}=\Upsilon_{J^{\ddagger}+1}$ as the output of $J^{\ddagger}\ge J^{\dagger}$ steps.

If $J^{\ast}=1$, then $\varepsilon_{J^{\ast}}\asymp d$ and trivially $\EE_{\vec{X},\vec{Y}}\|\bar{f} - f^{\ddagger}(\vec{X},\vec{Y})\|_{\LtwoP}^2\lesssim d^2$. Suppose $J^{\ast}>1$.

\noindent\textsc{Part 1:} Bounding $\PP(A_j\cap A_{j-1}^c)$ for $j\ge 2$.

Pick $j\ge 3$ and repeat the logic leading up to \eqref{eq:A_j:and:A_j_minus_one:intermediate} in the proof of Theorem \ref{theorem:nonparam:upperbound:main} to obtain \begin{align*}
          \PP(A_j\cap A_{j-1}^c) &\le \sum_{g\in\cL(j-1)\cap B(\bar f, \tfrac{d}{2^{j-2}})}\PP(\|\bar f -f_{i^{\ast}}\|_{\LtwoP} > (C+1)\delta),
    \end{align*} where $\delta = \tfrac{d}{2^{j-1}(C+1)}$. Recall  $\Upsilon_j= f_{i^{\ast}}$ where $i^{\ast}=\argmin_{i\in [M]}\sum_{j=1}^n (Y_j-f_i(X_j))^2$ for $\cO(g)=\{f_1,\dots,f_M\}$ when $\Upsilon_{j-1}=g$.

    Now,  $\cO(g)$ was originally formed as a (maximal) $\tfrac{d}{2^{j-1}c}$-packing of $B(g, \tfrac{d}{2^{j-2}})\cap\cF$. We then pruned the packing but this does not affect that it is a packing. Thus, $|\cO(g)|\le \cMadloc(g, \tfrac{d}{2^{j-2}}, 2c)$. But since $\|g-\bar f\|_{\LtwoP}\le \tfrac{d}{2^{j-2}}$, we actually have $|\cO(g)|\le \cMadloc(\bar f, \tfrac{d}{2^{j-3}}, 4c)$ by Lemma \ref{lemma:adloc:technical}.
    Moreover, Lemma \ref{lemma:nonparam:pruned:tree:properties} states that $\cO(g)$ forms a $\tfrac{d}{2^{j-2}c}=\tfrac{d}{2^{j-1}(C+1)}=\delta$-covering of $\cF'=B(g, \tfrac{d}{2^{j-2}})\cap \cF$. Hence by Lemma \ref{lemma:nonparam:intermediate_error}, 
    \begin{align*}
          \MoveEqLeft \PP(A_j\cap A_{j-1}^c) \\ &\le \sum_{g\in\cL(j-1)\cap B(\bar f, \tfrac{d}{2^{j-2}})}3 |\cO(g)| \exp(-\Lambda(C, \sigma, L) n\delta^2) \\
          &\le  \sum_{g\in\cL(j-1)\cap B(\bar f, \tfrac{d}{2^{j-2}})}3 \cdot \cMadloc(\bar f, \tfrac{d}{2^{j-3}}, 4c)\exp(-\Lambda(C, \sigma, L) n\delta^2). 
    \end{align*} Note once more that assuming $c>8$ implies $C>3$ so the application of the lemma was valid.
    
    Next, since $\cL(j-1)$ forms a $\tfrac{d}{2^{j-1}c}$-packing of $\cF$ by the same lemma, it follows that the number of choices for $g\in\cL(j-1)\cap B(\bar f, \tfrac{d}{2^{j-2}})$ is bounded by $\cMadloc(\bar f, \tfrac{d}{2^{j-2}}, 2c)$. Thus,  \begin{align*}
            \MoveEqLeft \PP(A_j\cap A_{j-1}^c) \\ &\le  3 \cdot \cMadloc(\bar f, \tfrac{d}{2^{j-2}}, 2c)\cdot \cMadloc(\bar f, \tfrac{d}{2^{j-3}}, 4c)\exp(-\Lambda(C, \sigma, L) n\delta^2) \\
          &\le 3 \cdot \left[\cMadloc(\bar f, \tfrac{d}{2^{j-2}}, 4c)\right]^2\exp(-\Lambda(C, \sigma, L) n\delta^2)\\
          &= 3 \cdot \left[\cMadloc(\bar f,\tfrac{c\varepsilon_j}{\sqrt{\Lambda(\tfrac{c}{2}-1, \sigma, L)}},4c)\right]^2\exp(-\Lambda(\tfrac{c}{2}-1, \sigma, L) n\delta^2)
    \end{align*} using non-increasing properties from Lemma \ref{lemma:monotone:nonparam:adaptive}.

    For $j=2$, recall $\PP(A_2\cap A_1^c)=\PP(A_2)$ since $\PP(A_1)=0$. Well $\Upsilon_2=f_{i^{\ast}}$ was picked from a maximal $d/c$-packing (hence covering) of $B(f_R,d)\cap\cF$, without pruning. But $B(f_R,d)\cap\cF=B(\bar{f}, d)\cap\cF$, so the cardinality is bounded by $\cMadloc(\bar f,d, c)\le \cMadloc(\bar f,d, 4c) $. Hence applying Lemma \ref{lemma:nonparam:intermediate_error} with $\delta = \tfrac{d}{c}=\tfrac{d}{2(C+1)}$, \begin{align*}
        \PP(A_2) &\le \PP(\|\bar f -f_{i^{\ast}}\|_{\LtwoP} > d/2) \\
        &= \PP(\|\bar f -f_{i^{\ast}}\|_{\LtwoP} > (C+1)\delta) \\
        &\le 3 |\cO(f_R)| \exp(-\Lambda(C, \sigma, L) n\delta^2)  \\
        &\le 3 \cdot \cMadloc(\bar f,d, 4c)\exp(-\Lambda(C, \sigma, L) n\delta^2)\\
        &\le 3 \cdot \left[\cMadloc(\bar f, d, 4c)\right]^2\exp(-\Lambda(C, \sigma, L) n\delta^2) \\
        &= 3 \cdot \left[\cMadloc(\bar f, \tfrac{c\varepsilon_2}{\sqrt{\Lambda(\tfrac{c}{2}-1, \sigma, L)}},4c)\right]^2\exp(-n\varepsilon_2^2).
    \end{align*} 
    
    \noindent\textsc{Part 2:} Bounding $\PP(A_J)$ for $1\le J\le J^{\ast}$.

     Set $a_J=\exp(-n\varepsilon_J^2)$. Summing over $2\le j\le J$ and using Lemma \ref{lemma:monotone:nonparam:adaptive} to replace $\varepsilon_j$ with $\varepsilon_J$ in the local metric entropy, we conclude as in the non-adaptive case that \begin{align*}
        \PP(A_J) &\le \sum_{j=2}^J \PP(A_j\cap A_{j-1}^c) \le 3\sum_{j=2}^J \left[\cMadloc(\bar f, \tfrac{c\varepsilon_j}{\sqrt{\Lambda(C, \sigma, L)}},4c)\right]^2\exp(- n\varepsilon_j^2) \\
        &\le 3 \left[\cMadloc(\bar f, \tfrac{c\varepsilon_J}{\sqrt{\Lambda(C, \sigma, L)}},4c)\right]^2\cdot \frac{a_J}{1-a_J}.
    \end{align*} Since $J^{\ast}>1$ and $1\le J\le J^{\ast}$, \eqref{eq:upper_bound_condition:main_theorem:adaptive} implies both that \[\bigg[\cMadloc(\bar f, \tfrac{c\varepsilon_J}{\sqrt{\Lambda(C, \sigma, L)}},4c)\bigg]^2\le \exp(n\varepsilon_J/2)\] and $\tfrac{1}{1-a_J}\le 2$. Hence \begin{align*}
        \PP(A_J) &\le 6\exp(n\varepsilon_J^2/2)\cdot a_J = 6\exp(-n\varepsilon_J^2/2).
    \end{align*}

    Next, recall we set $f^{\dagger}=\Upsilon_{J^{\dagger}}$ as the output of exactly $J^{\dagger}-1$ steps. Define $B_j=\{\|\bar f-f^{\dagger}\|_{\LtwoP}>\kappa \varepsilon_j\}$ where $\kappa= \tfrac{1+3c}{2\sqrt{\Lambda(c/2-1,\sigma,L)}}.$
    
 \noindent\textsc{Part 3:} Bounding $\PP(B_J)$ for integers $-\infty< J\le J^{\ast}$.

     Well, let us first show $\PP(B_J)\le \PP(A_J)$ for $1\le J\le J^{\ast}$. Suppose $B_J$ holds. By comparing \eqref{eq:upper_bound_condition:main_theorem:adaptive:preliminary} with \eqref{eq:upper_bound_condition:main_theorem:adaptive}, we see that $J^{\ast}\le J^{\dagger}$. Hence $J\le J^{\dagger}$. Thus, the triangle inequality and Lemma \ref{lemma:nonparam:cauchy:sequence} applied with $\Upsilon_J$ and $f^{\dagger}=\Upsilon_{J^{\dagger}}$ imply 
     \begin{align*}
            \kappa \varepsilon_J &< \|\bar f-f^{\dagger}\|_{\LtwoP} \\ &\le \|\bar f-\Upsilon_J\|_{\LtwoP} +\|\Upsilon_J-f^{\dagger}\|_{\LtwoP} \\
            &\le \|\bar f-\Upsilon_J\|_{\LtwoP} + \tfrac{d(2+4c)}{c2^J}.
        \end{align*}
        Then we again conclude $A_J$ holds since \begin{align}
            \|\bar f-\Upsilon_J\|_{\LtwoP} > \kappa \varepsilon_J - \tfrac{d(2+4c)}{c2^J} = \tfrac{d}{2^{J-1}}. \label{eq:temp:B_J:bound:adaptive}
        \end{align} In other words $B_J\subseteq A_J$, so \[\PP(B_J)\le \PP(A_J)\le 6\exp(-n\varepsilon_J^2/2),\] since $J\le J^{\ast}$ and we 
     can also  apply the bound from Part 2. Since $\PP(B_J)=0$ for all $J\le 0$ due to $\kappa\varepsilon_J > d$ for such $J$, our bound in \eqref{eq:temp:B_J:bound:adaptive} holds for all integers $-\infty< J \le J^{\ast}$.

 \noindent\textsc{Part 4:} Bounding $\PP(\|\bar f - f^{\ddagger}\|_{\LtwoP}>\kappa'x)$ for all $x\ge\varepsilon_{J^{\ast}}$, where $\kappa' = (2+\tfrac{1+2c}{1+3c})\kappa$.

     We now re-use the logic from Part 4 of the proof of the non-adaptive Theorem \ref{theorem:nonparam:upperbound:main} but using $f^{\dagger}$ and $f^{\ddagger}$ in place of $f^{\ast}$ and $f^{\ast\ast}$, respectively. Recall $f^{\ddagger}=\Upsilon_{J^{\ddagger}+1}$ is the output of $J^{\ddagger}\ge J^{\dagger}$ steps. Pick any $x\ge\varepsilon_{J^{\ast}}$ so for some $0<J\le J^{\ast}$ we have $x\in[\varepsilon_J,\varepsilon_{J-1})$. Then we have
      \begin{align*}
            \|\bar f-f^{\ddagger}\|_{\LtwoP} &\le \|\bar f - f^{\dagger}\|_{\LtwoP} + \|f^{\dagger}-f^{\ddagger}\|_{\LtwoP} \\
            &= \|\bar f - f^{\dagger}\|_{\LtwoP} + \|\Upsilon_{J^{\ddagger}+1}- \Upsilon_{J^{\dagger}}\|_{\LtwoP} \\ &\le \|\bar f - f^{\dagger}\|_{\LtwoP} + \tfrac{d(2+4c)}{c2^{J^{\dagger}}} \\
            &= \|\bar f - f^{\dagger}\|_{\LtwoP} + \tfrac{1+2c}{1+3c}\kappa\varepsilon_{J^{\dagger}} \\
            &\le \|\bar f - f^{\dagger}\|_{\LtwoP} + \tfrac{1+2c}{1+3c} \kappa x.
        \end{align*} The last line used that $x\ge \varepsilon_{J^{\ast}} \ge \varepsilon_{J^{\dagger}}$, which in turn follows from $J^{\ast}\le J^{\dagger}$. Using the definition $\kappa'$ and  that $2\kappa x \ge \kappa \varepsilon_{J-1}$ since $x\ge\varepsilon_J=\varepsilon_{J-1}/2$, we have \begin{align*}
            \PP(\|\bar f-f^{\ddagger}\|_{\LtwoP}>\kappa' x) &\le \PP( \|\bar f - f^{\dagger}\|_{\LtwoP} + \tfrac{1+2c}{1+3c}\kappa x> \kappa'x) \\
            &= \PP( \|\bar f - f^{\dagger}\|_{\LtwoP}  > 2\kappa x) \\ 
            &\le \PP( \|\bar f - f^{\dagger}\|_{\LtwoP}  > \kappa \varepsilon_{J-1}) \\
            &= \PP(B_{J-1}) \\
            &\le 6\exp(-n\varepsilon_{J-1}^2/2) \\
            &\le 6\exp(-nx^2/2).
        \end{align*} The last few lines used Part 3's bound on $\PP(B_J)$ and that $X<\varepsilon_{J-1}$.

\noindent\textsc{Part 5:} Bounding $\EE_{\vec{X},\vec{Y}}\|\bar f - f^{\ddagger}(\vec{X},\vec{Y})\|_{\LtwoP}^2$.

    We integrate Part 4's bound: \begin{align*}
       \MoveEqLeft \EE_{\vec{X},\vec{Y}}\|\bar{f} - f^{\ddagger}(\vec{X},\vec{Y})\|_{\LtwoP}^2 \\ &= \int_0^{\infty} \PP(\|\bar f-f^{\ddagger}\|_{\LtwoP}^2> x)\mathrm{d}x \\
        &= 2{\kappa'}^2\int_0^{\infty} t\cdot \PP(\|\bar f-f^{\ddagger}\|_{\LtwoP}> \kappa't)\mathrm{d}t \\
        &\le 2{\kappa'}^2\int_0^{\varepsilon_J^{\ast}} t \mathrm{d}t + 2{\kappa'}^2\int_{\varepsilon_{J^{\ast}}}^{\infty}  t\cdot \PP(\|\bar f-f^{\ddagger}\|_{\LtwoP}> \kappa't)\mathrm{d}t \\
        &\le {\kappa'}^2 \varepsilon_{J^{\ast}}^2 +12{\kappa'}^2\int_{\varepsilon_{J^{\ast}}}^{\infty}t\cdot\exp(-nt^2/2)\mathrm{d}t \\
        &= {\kappa'}^2 \varepsilon_{J^{\ast}}^2 +12{\kappa'}^2\cdot n^{-1}\exp(-n\varepsilon_{J^{\ast}}^2/2).
    \end{align*} Since \eqref{eq:upper_bound_condition:main_theorem} implies  $n^{-1}< \varepsilon_{J^{\ast}}^2/\log 2$ as $J^{\ast}>1$ and the exponential term is always $\le 1$, the entire expression is bounded by $\varepsilon_{J^{\ast}}^2$ up to constants.
    \end{proof}

\end{document}